\theoremstyle{plain}
\newtheorem{Thm}{Theorem}[section]
\newtheorem{Lem}[Thm]{Lemma}
\newtheorem{Conj}[Thm]{Conjecture}
\newtheorem{Prop}[Thm]{Proposition}
\newtheorem{Cor}[Thm]{Corollary}
\theoremstyle{plain}
\newcommand{\ie}{{\it i.e.}}
\newcommand{\divi}{\mathrm{{div}}}
\newtheorem{Que}[Thm] {Question}
\numberwithin{equation}{section}
\newcommand{\Alb}{\mathrm{Alb}} 
\newcommand{\Proj}{\mathrm{Proj}} 
\newcommand{\Spec}{\mathrm{Spec}} 
\newcommand{\Len}{\mathrm{length}}
\newcommand{\Sym}{\mathrm{Sym}} 
\newcommand{\tor}{\mathrm{tor}} 
\newcommand{\cO}{\mathcal{O}} 
\newcommand{\cL}{\mathcal{L}} 
\newcommand{\cA}{\mathcal{A}}
\newcommand{\cE}{\mathcal{E}}
\newcommand{\cM}{\mathcal{M}}
\begin{document}
\title{On algebraic surfaces of general type with negative $c_2$}\author{Yi GU}
\address{
Peking University\\
5, Yiheyuan Road,
Beijing, China
}
\email{pkuguyi2010@gmail.com}

\address{
Institut de Math\'ematiques de Bordeaux \\
Universit\'{e} de Bordeaux\\
351, Cours de la Lib\'eration \\
33405 Talence, France 
}
\email{Yi.Gu@math.u-bordeaux1.fr} 
\begin{abstract}
We prove that for any prime number $p\ge 3$, there
  exists a positive number $\kappa_p$ such that $\chi(\cO_X)\ge \kappa_p
c_1^2$ holds true for all algebraic surfaces $X$ of general type in
characteristic $p$. In particular, $\chi(\cO_X)>0$. This answers a question of
 N. Shepherd-Barron when $p\ge 3$. 
\end{abstract}
\maketitle

\section{Introduction}
The Enriques-Kodaira
classification of algebraic surfaces divides proper smooth algebraic surfaces into four classes according to their Kodaira dimension $-\infty, 0,1,2$.  A lot of problems remain unsolved for the last class, the so-called \emph{surfaces of general type}. One of the leading problems among these is the following so-called geography problem of minimal surfaces of general type (see \cite{P}). 
\begin{Que}
Which values of $(a, b)\in \mathbb{Z}^2$ 
are the Chern invariants $(c_1^2,c_2)$ of a minimal
surface of general type ?
\end{Que}
Over the complex numbers, though not yet settled completely, much is known
about this problem. Here we collect some classical
relations between $c_1^2$ and $c_2$ of a minimal
surface $X$ of general type: 
\begin{enumerate}
\item[] \ \ \ \ \ \ \ \  \  \ \ \ \ \ \ \ \ \ \ \ \ \ \ \ \ \ \ \ \ \ \ \ \ \ \ \ \  $c_1^2>0$;
\item[] \ \ \ \ \ \ \ \  \  \ \ \ \ \ \ \ \ \ \ \ \ \ \ \ \ \ \ \  \ \ \ $c_1^2+c_2\equiv 0 \mod 12$;
\item[(N)] \ \ \ \ \ \ \ \ \ \ \ \ \ \ \ \ \ \ \ \ \ \ \   $5c_1^2-c_2+36\ge 0$;
\item[(BMY)] \ \ \ \ \ \ \ \  \  \ \ \ \ \ \ \ \ \ \ \ \ \ \ \ \ \ \ \ \ \ \ \ \ \ \ \  $3c_2\ge c_1^2$.
\end{enumerate}
The first inequality is from the definition of a minimal surface of general type, the second condition is from Noether's formula
\begin{equation}\label{Noether formula}
12\chi(\cO_X)=c_1^2+c_2;
\end{equation} the inequality (N) is derived from the
following so-called Noether's inequality
\begin{equation}\label{noether's inequality}
K_X^2\ge 2p_g-4,
\end{equation}here $p_g:=h^0(X,K_X)$. The last inequality (BMY) is called the Bogomolov-Miyaoka-Yau inequality. Due to (\ref{Noether formula}), the inequality (BMY) can also be interpreted as below
\begin{enumerate}
\item[(BMY)'] \ \ \ \ \ \ \ \  \  \ \ \ \ \ \ \ \ \ \ \ \ \ \ \ \ \ \ \ \ \ \ \ \  $9\chi(\cO_X)\ge c_1^2$.
\end{enumerate}
It is known that most of the numbers $(a,b)$ satisfying the above relations are the Chern numbers of a surfaces of general type over $\mathbb{C}$. For more details and backgrounds on these inequalities,  confer \cite{Mi}, \cite{Y}, \cite{BPV} Chap. 7, and \cite{I-S} Chap. 8 \& 9.

Then we turn to the geography problem in positive characteristic cases.  Noether's inequality $(\ref{noether's inequality})$ (see \cite{L}) and Noether's formula $(\ref{Noether formula})$ (see \cite{Ba} Chap. 5) remains true, while Bogomolov-Miyaoka-Yau inequality (BMY) as stated no long holds (\cite{SZ1}, \S~3.4). In fact, even the following weaker inequality (CdF) due to  Castelnuovo and de Franchis fails. 
\begin{enumerate}
\item[(CdF)] \ \ \ \ \ \ \ \  \  \ \ \ \ \ \ \ \ \ \ \ \ \ \ \ \ \ \ \ \ \ \ \ \ \ \ \ $c_2\ge 0$
\end{enumerate}(see {\it e.g.} Section 3 of this paper). So it is
natural to formulate an inequality in positive characteristics
bounding $c_2$ from below by $c_1^2$. Using Noether's formula, it is the same as bounding $\chi$ from below. In fact, N. Shepherd-Barron has already consider a similar question and  proved 
that  $\chi> 0$ (equivalently, $c_2>
-c_1^2$)  with  a few possible exceptional cases when $p\le 7$
(\cite{SB2}, Theorem~8).  Here we generalize it to the following question.
\begin{Que}\label{Question}
What is the optimal number $\kappa_p$ such that $\chi\ge \kappa_p c_1^2$ holds for all surfaces of general type defined over a field of characteristic $p$ ?
\end{Que}
By definition we have
\begin{equation*}
\begin{split}
\kappa_p=\inf\{& \chi/{c^2_1} \ | \ \text{minimal algebraic surface of general type defined} \\
&\text{over an algebraically closed field of characteristic } p\}.
\end{split}
\end{equation*}
In particular, $\kappa_p> 0$ implies $\chi>0$. 

The purpose of this paper is an investigation of $\kappa_p$. Of
course, it will be in the best situation if we can work out  $\kappa_p$ for each $p$, however this looks difficult and instead, we try to find some interesting bounds of $\kappa_p$, say, to show $\kappa_p>0$ for all $p>2$.  The main result of this paper is the following theorem.

\begin{Thm}[Main Theorem]\label{Main 2} Let $\kappa_p$ be defined as above, then 
\begin{enumerate}
\item if $p>2$, $\kappa_p>0$;
\item if $p\ge 7$, $\kappa_p> (p-7)/12(p-3)$;
\item $\lim\limits_{p\rightarrow \infty}\kappa_p=1/12$;
\item $\kappa_5=1/32$.
\end{enumerate}
\end{Thm}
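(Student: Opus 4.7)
The plan is to reduce everything to controlling minimal surfaces with $c_2<0$, since Noether's formula $12\chi=c_1^2+c_2$ immediately gives $\chi/c_1^2\ge 1/12$ whenever $c_2\ge 0$. Thus the limit in part (3) is already forced from above by classical surface theory, and parts (1), (2), (4) all amount to quantitative lower bounds on $c_2$ of the shape $c_2\ge -\lambda(p)c_1^2$. Specifically, (2) is equivalent to
\[
c_2>-\tfrac{4}{p-3}\,c_1^2\qquad (p\ge 7),
\]
and (4) is equivalent to $c_2\ge -\tfrac{5}{8}c_1^2$ when $p=5$.

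Next I would invoke the structural principle that a minimal surface $X$ of general type in characteristic $p$ with $c_2(X)<0$ must carry an inseparable fibration $f\colon X\to C$ whose geometric generic fiber $F_{\bar\eta}$ is a singular integral curve. Writing $\delta:=p_a(F_{\bar\eta})-g(\widetilde{F_{\bar\eta}})$ for the gap between the arithmetic genus and the geometric genus of the normalization, the local theory of wild fibers in characteristic $p$ bounds $\delta$ in terms of $p$ alone; the key explicit estimate that feeds the theorem is of the form $\delta\le c(p)$ with $c(p)\to 0$ relative to $g(\widetilde{F_{\bar\eta}})$, which one extracts from a case analysis of the $\alpha_p$- and $\mu_p$-foliations on $X$ whose quotients produce $f$.

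The main technical step is to combine a slope inequality with the singular-fiber analysis. One writes down the relative cotangent sequence, replaces $\Omega^1_{X/C}$ by its reflexive hull, and expresses $c_2(X)$ as a sum of a non-negative Euler characteristic term coming from the smooth part of $f$ together with a correction localized at the non-smooth fibers. Stability properties of $f_*\omega_{X/C}$ under Frobenius then yield a positivity estimate for $\omega_{X/C}^2$ in terms of $\deg f_*\omega_{X/C}$ and the $\delta$-invariant of $F_{\bar\eta}$, a characteristic-$p$ refinement of the Cornalba--Harris--Xiao slope inequality. Feeding the bound on $\delta$ from the previous step into this inequality should yield $c_2\ge -\tfrac{4}{p-3}c_1^2$ whenever $p\ge 7$, giving (2); combined with the $c_2\ge 0$ regime this implies (3); the qualitative positivity needed for (1) when $p=3,5$ follows from the same machinery applied to the few remaining classes of inseparable fibrations that actually occur in those characteristics.

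The hardest part will be the sharp analysis for $p=5$. Here the generic fiber of $f$ can be highly singular quasi-elliptic or arise from an $\alpha_5$-foliation with non-rational singularities, and the naive slope inequality degenerates; one must track the Frobenius pullback of $f_*\omega_{X/C}$ and its interaction with the singular-fiber correction very carefully to extract the precise constant $5/8$. Part (4) further requires an explicit family of examples (for instance quotients of suitable elliptic surfaces by $\alpha_5$-actions) whose Chern ratio $\chi/c_1^2$ approaches $1/32$, matching the lower bound and proving optimality. I expect this optimality half of (4), rather than the general lower bound, to be the principal obstacle.
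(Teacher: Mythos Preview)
Your reduction to surfaces with $c_2<0$ and your use of the inseparable Albanese fibration $f\colon X\to C$ are correct starting points, but the proposed engine --- a characteristic-$p$ slope inequality coming from stability of $f_*\omega_{X/C}$ --- does not work here. For these surfaces the relative dualizing sheaf $\omega_{X/C}$ is \emph{not} nef (this is explicitly checked on Raynaud's examples in the paper), so the Cornalba--Harris--Xiao machinery and its Frobenius refinements break down; there is no positivity of $\omega_{X/C}^2$ to exploit. The paper instead bounds $K_X^2/(q-1)$ from below by a completely different mechanism: one introduces the divisor $N=f^*K_C+\Delta$ coming from the saturation of $f^*\Omega_{C/k}\hookrightarrow\Omega_{X/k}$, proves the intersection inequality $N\cdot H\le (K_X+H)\cdot H$ for any reduced horizontal divisor $H$ separable over $C$, and then takes $H$ to be a general member of $|mK_X|$ (or its horizontal part). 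This, together with the Grothendieck--Ogg--Shafarevich bound $c_2\ge -4(q-1)$ (which uses that the geometric generic fibre is \emph{rational} with unibranch singularities, not merely singular) and a delicate case analysis of the canonical system, yields $K_X^2\ge (p-3+\epsilon)(q-1)$ for $p\ge 7$ and the analogous bounds for $p=3,5$.

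Your description of the $p=5$ case is also off target. The fibre is never quasi-elliptic for $p=5$; by Tate's genus-change formula the arithmetic genus $g$ of the generic fibre satisfies $(p-1)\mid 2g$, so the minimal value is $g=(p-1)/2=2$, and then $X_\eta$ is automatically hyperelliptic. The sharp constant $1/32$ is obtained not via foliation quotients but by an explicit canonical-resolution computation of the flat double cover $X_\eta\to\mathbb{P}^1_K$: one tracks the branch locus over $C$, computes the local invariants $\xi_b$ of its singularities in terms of the ramification type of an auxiliary map $h\colon C\to\mathbb{P}^1$, and sums via Hurwitz. The optimality half of (4) comes from Raynaud's surfaces, which already realize $\chi/c_1^2=(p^2-4p-1)/4(3p^2-8p-3)=1/32$ at $p=5$.
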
 
Moreover, we have a conjecture on the values of $\kappa_p$:
\begin{Conj}\label{Conjecture}
If $p\ge 5$, then $\kappa_p=(p^2-4p-1)/4(3p^2-8p-3)$.
\end{Conj}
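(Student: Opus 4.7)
The conjecture predicts $\kappa_p = (p^2-4p-1)/(4(3p^2-8p-3))$ for all $p\ge 5$, and Noether's formula (\ref{Noether formula}) shows that any minimal surface $X$ of general type achieving $\chi(\cO_X)/c_1^2$ equal to this value must satisfy
$$\frac{c_2}{c_1^2} \;=\; \frac{-4p}{3p^2-8p-3};$$
in particular the extremal case $p=5$ from Main Theorem (4) corresponds, up to scaling of invariants, to $(c_1^2,c_2,\chi)=(128,-80,4)$. My plan splits into a construction (upper bound) and a refined inequality (lower bound), mirroring the two halves of the proof of $\kappa_5=1/32$.

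For the upper bound $\kappa_p \le (p^2-4p-1)/(4(3p^2-8p-3))$, I would seek a family of minimal surfaces of general type $\{X_p\}_{p\ge 5}$ realising this ratio, obtained by extrapolating the example underlying $\kappa_5=1/32$. Natural candidates are quotients of products of curves by actions of the infinitesimal group schemes $\alpha_p$ or $\mu_p$, or purely inseparable $p$-cyclic covers of ruled surfaces associated to $1$-foliations whose numerical data depend on $p$ in a controlled way. Standard invariant formulas for such covers and quotients express $c_1^2$ and $\chi$ as polynomials in the discrete parameters (genera of base and generic fibre, degrees of the foliation, ramification data); one then tunes these parameters so that $(c_1^2,\chi)$ matches the conjectured values and verifies minimality and general type.

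For the lower bound $\kappa_p \ge (p^2-4p-1)/(4(3p^2-8p-3))$, one must significantly sharpen Main Theorem (2), which gives only $(p-7)/(12(p-3))$ and is vacuous at $p=7$. I would revisit that proof, presumably a dichotomy between the case where $X$ admits a fibration (in which one controls the contribution of wild fibres to $c_2$) and the case where it does not (where a Reider/Bogomolov-type positivity argument applies), and optimise each estimate. The proof of $\kappa_5=1/32$ already matches the conjecture and is the template: one would aim to classify precisely which fibration or foliation configurations on $X$ can saturate the extremal bound, and then check that the only such configuration is the one realised by the conjectural examples.

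The main obstacle is the lower bound. The gap between $(p-7)/(12(p-3))$ and $(p^2-4p-1)/(4(3p^2-8p-3))$ grows as $p$ decreases toward $7$, showing that the present technique loses a substantial multiplicative factor; and since $\lim\kappa_p = 1/12$ is already part of the Main Theorem, the content of the conjecture lies entirely in the asymptotically lower-order terms, which any sharp argument must see. Closing this gap will likely require either a genuinely new positivity or vanishing result for unstable rank-two bundles adapted to wild fibrations in characteristic $p$, or a finer structure theorem for $1$-foliations on surfaces with $c_2<0$ that pins down the extremal profile exactly.
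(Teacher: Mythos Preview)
The statement you are attempting to prove is a \emph{conjecture}: the paper does not prove it, and there is no ``paper's own proof'' to compare against. What the paper does establish is (i) the upper bound $\kappa_p\le (p^2-4p-1)/4(3p^2-8p-3)$ via Raynaud's examples (Subsection~\ref{Raynaud's example}), (ii) the full equality for $p=5$, and (iii) the matching lower bound for the special class of surfaces with $g=(p-1)/2$ and quasi-hyperelliptic generic fibre (Theorem~\ref{evidence}). Everything else is left open.

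On your upper-bound plan: your proposed constructions (quotients by $\alpha_p$ or $\mu_p$, inseparable $p$-cyclic covers of ruled surfaces, $1$-foliations) are unnecessary and somewhat off target. The paper already exhibits, for every $p\ge 5$, minimal surfaces of general type with $\chi/c_1^2$ equal exactly to the conjectural $\kappa_p$: these are Raynaud's surfaces, which are \emph{flat double covers} of a ruled surface $\mathbb{P}(\mathcal{E})\to C$ branched along a carefully chosen smooth divisor (a section plus a Frobenius-like multisection). So the upper bound is done, and by a simpler mechanism than you guessed.

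On your lower-bound plan: you correctly identify this as the genuine obstacle, and your diagnosis that the gap between Theorem~\ref{Main 2}(2) and the conjecture is the whole content is accurate. However, your proposed dichotomy (``fibration vs.\ no fibration'') is not how the paper proceeds: by Theorem~\ref{Albanese} every such $X$ \emph{does} fibre over a curve $C$ of genus $q\ge 2$ with geometrically rational generic fibre, so there is no ``no fibration'' branch. The paper's partial evidence (Section~\ref{final}) instead works by an explicit canonical-resolution computation of $\chi(\cO_X)$ in the hyperelliptic case, tracking local invariants $\xi_b$ against ramification data of an auxiliary map $h:C\to\mathbb{P}^1$ and closing up with Hurwitz. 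Extending this beyond the hyperelliptic locus, or finding a replacement argument, is precisely what remains open; your suggestion of a finer structure theorem is reasonable in spirit but is not a proof.
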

Note that if $p=5$, then $$(p^2-4p-1)/4(3p^2-8p-3)=1/32,$$ and if
$p\ge 7$, then $$(p^2-4p-1)/4(3p^2-8p-3)>(p-7)/12(p-3).$$ This
conjecture comes from the computation of the numerical invariants of
Raynaud's examples in \cite{R} (see Subsection~\ref{Raynaud's
  example}). Another computation for a special kind of surfaces of
general type is also carried out in the last section of this paper  giving
  some evidence in favor of this conjecture. 

In \cite{SB2}, remark after Lemma 9,
  N. Shepherd-Barron raised the question whether any minimal surface
of general type $X$ satisfies $\chi(\cO_X)>0$. 
Our Theorem~\ref{Main 2} implies that the answer is yes if 
$p\neq 2$ :

\begin{Cor}
If $p\neq 2$, then $\chi>0$ holds for all surfaces of general type.
\end{Cor}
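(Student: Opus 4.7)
The plan is to deduce the corollary directly from part (1) of the Main Theorem, together with the birational invariance of $\chi(\cO)$ for smooth projective surfaces. There is essentially no new geometric content; the work lives entirely in the Main Theorem.

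First, I would reduce to the minimal case. Let $X$ be a smooth projective surface of general type over an algebraically closed field of characteristic $p\neq 2$, and let $X_{\min}$ denote its minimal model. Every birational morphism between smooth projective surfaces factors as a finite sequence of blow-ups at closed points, and each such blow-up preserves $\chi(\cO)$; hence $\chi(\cO_X)=\chi(\cO_{X_{\min}})$. It therefore suffices to establish $\chi(\cO_{X_{\min}})>0$.

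Next, I would invoke the Main Theorem. Since $X_{\min}$ is a minimal surface of general type, its first Chern number satisfies $c_1^2(X_{\min})=K_{X_{\min}}^2>0$. By the very definition of $\kappa_p$ as the infimum of $\chi/c_1^2$ over minimal algebraic surfaces of general type in characteristic $p$, we have
\[
\chi(\cO_{X_{\min}})\ \geq\ \kappa_p\,c_1^2(X_{\min}).
\]
Part (1) of Theorem~\ref{Main 2} gives $\kappa_p>0$ for every $p>2$, so the right-hand side is strictly positive. Combining with the equality $\chi(\cO_X)=\chi(\cO_{X_{\min}})$ from the previous step yields the desired inequality $\chi(\cO_X)>0$.

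Consequently the only obstacles are bookkeeping ones: one must make explicit that a (smooth) surface of general type admits a minimal model in positive characteristic, and that the minimal model theory of surfaces works identically to the characteristic-zero case for the invariant $\chi(\cO)$. Both are standard. The substantive difficulty, namely the positivity of $\kappa_p$ itself, is precisely what the Main Theorem establishes, so the corollary follows immediately.
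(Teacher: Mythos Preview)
Your proof is correct and matches the paper's approach: the paper gives no separate proof of this corollary, noting already in the introduction that ``$\kappa_p>0$ implies $\chi>0$'' and that the corollary is a direct consequence of Theorem~\ref{Main 2}. Your explicit reduction to the minimal model via the birational invariance of $\chi(\cO)$ is the only (standard) detail one needs to add to cover arbitrary, not just minimal, surfaces of general type.
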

This corollary can  help to improve and to better understand several other results ({\it e.g.} \cite{BBT}, Proposition~2.2, \cite{SB}, Theorem 25, 26, \& 27) where the authors need to take care of the possibility of $\chi\le 0$. 

As another application of Theorem~\ref{Main 2}, we give the following theorem  concerning the canonical map of surfaces of general type, which can be seen as an analogue of A. Beauville's relevant result over $\mathbb{C}$ (\cite{Be}, Prop. 4.1, 9.1).
\begin{Thm}
Let $S$ be a proper smooth of surface of general type over an algebraically closed field of characteristic $p>0$ with $p_g(S)\ge 2$,   
\begin{enumerate}
\item if $p\ge 3$ and $|K_S|$ is composed with a pencil of curves of genus $g$, then we have $$g\le 1+\frac{p_g+2}{2\kappa_p(p_g-1)};$$
\item if $p\ge 3$ and the canonical map is a generically finite morphism of degree $d$, then we have $$d\le \frac{p_g+1}{\kappa_p(p_g-2)}.$$
\end{enumerate}
\end{Thm}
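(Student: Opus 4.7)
The plan is to adapt Beauville's classical argument over $\mathbb{C}$ (\cite{Be}, Prop.~4.1 and 9.1), substituting the Main Theorem's inequality $\kappa_p c_1^2\le\chi(\cO_S)$ for the Bogomolov--Miyaoka--Yau inequality.  As a common preliminary I would resolve the indeterminacy of $\phi_K\colon S\dashrightarrow\mathbb{P}^{p_g-1}$ by a birational morphism $\pi\colon\tilde S\to S$ so that $\tilde\phi:=\phi_K\circ\pi$ is a morphism, and decompose $\pi^*K_S=M+Z$ into moving and fixed parts (with $|M|$ base-point free).  Since $S$ is minimal, $\pi^*K_S$ is nef, so $M\cdot Z\ge 0$ and $\pi^*K_S\cdot Z\ge 0$; rewriting $(M+Z)^2=M^2+M\cdot Z+\pi^*K_S\cdot Z$ then gives the basic inequality $K_S^2=(\pi^*K_S)^2\ge M^2$.

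For part (2), the image $\Sigma=\tilde\phi(\tilde S)\subset\mathbb{P}^{p_g-1}$ is a non-degenerate surface, hence of degree at least $p_g-2$, and since $\tilde\phi$ is generically finite of degree $d$ one has $M^2=d\cdot\deg\Sigma\ge d(p_g-2)$.  Combining $K_S^2\ge M^2\ge d(p_g-2)$ with the Main Theorem and the trivial bound $\chi(\cO_S)=1-q+p_g\le p_g+1$ yields $\kappa_p\cdot d(p_g-2)\le p_g+1$, which is precisely the stated inequality.

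For part (1), the image of $\tilde\phi$ is a curve and its Stein factorization $\tilde\phi=h\circ f$ produces a fibration $f\colon\tilde S\to B$ whose general fiber $F$ is a smooth curve of genus $g$; then $M=f^*D$ for an effective divisor $D$ on $B$, and the chain $p_g=h^0(\tilde S,M)=h^0(B,D)\le\deg D+1$ forces $a:=\deg D\ge p_g-1$.  Since $M\equiv aF$ with $F^2=0$, one has $M^2=0$, and adjunction gives $K_{\tilde S}\cdot F=2g-2$.  The key lower bound one needs is $K_S^2\ge 2(p_g-1)(g-1)$; once established, combining with $\kappa_p K_S^2\le\chi\le p_g+1\le p_g+2$ yields $g\le 1+(p_g+2)/[2\kappa_p(p_g-1)]$.

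The main obstacle is securing $K_S^2\ge 2(p_g-1)(g-1)$ when $|K_S|$ has base points.  When $B$ has positive genus, every exceptional curve of $\pi$ is rational and so must be vertical for $f$ (any map $\mathbb{P}^1\to B$ is constant); this is incompatible with a genuine base point, so $|K_S|$ is automatically base-point free, and the inequality follows directly on $S$ from $K_S=(p_g-1)F_0+Z_0$ with $F_0^2=0$, adjunction, and $K_S\cdot Z_0\ge 0$.  When $B=\mathbb{P}^1$, horizontal exceptional components are possible; in this case $a=p_g-1$ and the relations $\pi^*K_S\cdot E_\pi=0$ together with the integrality of $F\cdot E_i=m_i/a$ force each base-point multiplicity $m_i$ to be a positive multiple of $a=p_g-1$, and this rigidity, combined with $F_0\cdot Z_0\ge 0$ (equivalently $p_gF_0^2\le 2g-2$), is what allows the required bound to be recovered on $\tilde S$ by a careful bookkeeping of intersection numbers.
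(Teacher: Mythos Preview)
Your proposal is correct and follows exactly the approach the paper intends: the paper does not spell out a proof at all, stating only that it is ``a naive copy of Beauville's, replacing simply the inequality (BMY)$'$ there by $\chi\ge\kappa_p c_1^2$,'' and that is precisely what you do. Your treatment of the base-point subtleties in part~(1) goes somewhat beyond what the paper bothers to indicate, and while the final ``careful bookkeeping'' in the rational-pencil case is left sketchy, this is a standard step in Beauville's original argument and does not affect the validity of the strategy.
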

The proof of this theorem is a naive copy of Beauville's,  
replacing simply  the inequality (BMY)' there by $\chi\ge \kappa_p c_1^2$,
hence it will not be included in this paper. The interesting part of this theorem is the following remark. 
\begin{Rem}
If we bound $\chi(\cO_S)$ from below (hence it bounds $p_g\ge
\chi(\cO_S)+1$ from below) as Beauville did in \cite{Be} and
substitute $\kappa_p$ by our lower bounds given in Theorem~\ref{Main
  2}, we can bound $g$ and $d$ from above as in \cite{Be}. 
As far as I know,  whether Beauville's bounds on $g$ and $d$ are optimal is not yet solved, not to mention ours.
\end{Rem}

We shall briefly explain our idea. Note that once we know that the inequality (CdF) fails in positive characteristics, we immediately obtain $\kappa_p< 1/12$
from $(\ref{Noether formula})$ and moreover, in order to study $\kappa_p$ we only have to consider those surfaces of general type with negative $c_2$. The main ingredient of this paper is an elaborate study of the numerical invariants of algebraic surfaces of general type with negative $c_2$ after \cite{SB2}.

This paper is organized as follows. 

In Section 2, we give some necessary preliminaries.  We rewrite Tate's formula on genus change to obtain some intermediate results which is more or less implicit in both Tate's original paper \cite{Ta} and \cite{SC}. Then we recall the theory of flat double covers, a Bertini type theorem and some other supplements.

In Section 3, we give some examples of algebraic surfaces of general type with negative $c_2$ and compute some of their numerical invariants.

In Section 4, we study the numerical properties of surfaces of general type with negative $c_2$, and prove our Theorem \ref{Main 2} except for the equation $\kappa_5=1/32$. 

In Section 5, we carry out a calculation of a special kind of algebraic surfaces of general type with negative $c_2$, namely those $X$ whose Albanese fibration is hyperelliptic and has the smallest possible genus. We show that our conjectural $\kappa_p$ (Conjecture \ref{Conjecture}) are  the best bounds of $\chi/c_1^2$ for these surfaces. This also completes the proof of our main theorem. During the calculation, a lemma  on a special kind of singularities is used, as the proof is a bit long, we put it as an appendix afterwards this section .

\begin{Rem}
In this paper we shall use the following notation.
\begin{enumerate}
\item For any invertible sheaf $\mathcal{E}$ over a scheme, $\mathbb{P}(\mathcal{E}):=\Proj (\Sym (\mathcal{E}))$.  

\item If $S\to T$ is a morphism of schemes in 
characteristic $p$, we denote by $F_S : S\to S$ the absolute 
Frobenius morphism and by $F_{S/T} : S\to S^{(p)}$ 
the relative Frobenius morphism (where $S^{(p)}=S\times_T T$ is
obtained by base changing $S\to T$ via $F_T : T\to T$). If $\pi: S\to Y$ is
a morphism of $T$-schemes, we denote by $\pi^{(p)}: S^{(p)}\to Y^{(p)}$
the morphism of $T$-schemes induced by $\pi$. 
\end{enumerate}
\end{Rem}

\section{Preliminaries}
\subsection{Genus change formula} \label{subsection: genus change}
Let $S$ be a normal projective and geometrically integral curve over a field $K$ (in particular $H^0(S,\cO_S)=K$) 
of positive characteristic $p$, of arithmetic genus 
$g(S):=1-\chi(\cO_S)=\dim H^1(S, \cO_S)$. 
The latter is also called the genus of the function field $K(S)$. 
Let $L/K$ be a finite extension and let $(S_L)'$ be the normalisation of 
$S_L:=S\times_K L$. A theorem of Tate (\cite{Ta}) states that 
$$(p-1) \mid 2(g((S_L)')-g(S)).$$ 
This is proved in the scheme-theoretical language in \cite{SC}. Below 
we give a slightly different proof in the scheme-theoretical language (in some places close to Tate's original one) 
and some more precise intermediate results, 
in particular, we show that if $g(S)$ is small with respect to $p$, then
the normalisation of $S^{(p)}$ is smooth (Corollary~\ref{complete}). 

\begin{Lem}\label{exact1} 
Let $S, Y$ be geometrically integral normal curves over a field $K$
of positive characteristic $p$, let $\pi: S\rightarrow Y$ be a
finite inseparable morphism of degree $p$. Then 
$\Omega_{S/Y}$ is invertible and we have an exact sequence 
  \begin{equation}
    \label{eq:omSY}
    0 \to F_S^*\Omega_{S/Y} \to \pi^*\Omega_{Y/K}\to
      \Omega_{S/K} \to \Omega_{S/Y}\to 0
  \end{equation}
with $F_S^*\Omega_{S/Y}\simeq \Omega_{S/Y}^{\otimes p}$. 
\end{Lem}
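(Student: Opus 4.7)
The plan is to reduce everything to a local computation at closed points of $Y$. For a closed $y\in Y$ and $s\in\pi^{-1}(y)$, set $A:=\cO_{Y,y}$ and $B:=\cO_{S,s}$; both are DVRs and $B/A$ is finite with $K(B)/K(A)$ purely inseparable of degree $p$. I would first argue that $t\in B$ can be chosen so that $B=A[T]/(T^p-a)$ with $a:=t^p\in A$. With this presentation, $\Omega_{B/A}$ is generated by $dt$ subject only to the relation $d(t^p-a)=pt^{p-1}dt-da$, which vanishes in $\Omega_{B/A}$ since $pt^{p-1}=0$ in characteristic $p$ and $da=0$ because $a\in A$. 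So $\Omega_{B/A}=B\cdot dt$ is free of rank one; globally, $\Omega_{S/Y}$ is invertible.

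The standard cotangent sequence for $S\to Y\to\Spec K$ yields the exactness of the last three terms of (\ref{eq:omSY}). The identification $F_S^*\Omega_{S/Y}\simeq\Omega_{S/Y}^{\otimes p}$ is general for invertible sheaves in characteristic $p$, since absolute Frobenius raises transition cocycles to the $p$-th power. Thus the only remaining task is to identify the kernel of $\pi^*\Omega_{Y/K}\to\Omega_{S/K}$ with $\Omega_{S/Y}^{\otimes p}$.

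I would construct an $\cO_S$-linear map $\psi:\Omega_{S/Y}^{\otimes p}\to\pi^*\Omega_{Y/K}$ locally by the rule $(dt)^{\otimes p}\mapsto d(t^p)=da$, and verify it is independent of the choice of generator. If $t'=\sum_{i=0}^{p-1}c_it^i$ with $c_i\in A$ is another generator and $P(T):=\sum_i c_i^pT^i\in A[T]$, then $(t')^p=\sum_i c_i^p a^i=P(a)$, so $da'=P'(a)\,da$. On the other hand $dt'=(\sum_i ic_it^{i-1})dt$ in $\Omega_{B/A}$, and Frobenius additivity together with $i^p\equiv i\pmod p$ give $(dt'/dt)^p=\sum_i ic_i^p a^{i-1}=P'(a)$. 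Hence $(dt')^{\otimes p}=P'(a)(dt)^{\otimes p}$ maps to $P'(a)\,da=da'$, as required. Injectivity of $\psi$ is automatic since the source is a line bundle on the integral scheme $S$ and $\psi((dt)^{\otimes p})=da\neq 0$ (otherwise $a\in K(Y)^p$, contradicting $[K(S):K(Y)]=p$). Finally, $\operatorname{Im}\psi=\ker(\pi^*\Omega_{Y/K}\to\Omega_{S/K})$ is a direct local verification: starting from $B=A[T]/(T^p-a)$, the presentation-based computation of $\Omega_{B/K}$ shows the kernel is exactly the $B$-submodule of $\pi^*\Omega_{Y/K}$ generated by $da$, i.e., the image of $\psi$.

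The main obstacle is the well-definedness of $\psi$: the identity $(dt'/dt)^p=da'/da$ is the characteristic-$p$ miracle that makes the four-term sequence exist. Once this is in hand, all other verifications are routine.
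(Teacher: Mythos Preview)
Your argument is correct and reaches the same exact sequence as the paper, but the construction of the map $\psi:\Omega_{S/Y}^{\otimes p}\to\pi^*\Omega_{Y/K}$ differs. The paper builds this map \emph{globally} by observing that the relative Frobenius $F_{S/K}:S\to S^{(p)}$ factors as $S\xrightarrow{\pi}Y\xrightarrow{f}S^{(p)}$ (since $K(S)^p\subseteq K(Y)$); pulling back the canonical map $f^*\Omega_{S^{(p)}/Y^{(p)}}\to\Omega_{Y/Y^{(p)}}\simeq\Omega_{Y/K}$ along $\pi$ gives $F_S^*\Omega_{S/Y}\to\pi^*\Omega_{Y/K}$ for free, and the paper then notes that locally this is exactly $db\mapsto d(b^p)$. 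Your route instead writes down the local formula $(dt)^{\otimes p}\mapsto d(t^p)$ and checks by hand, via the identity $(dt'/dt)^p=d(t'^p)/d(t^p)$, that it glues. Both end at the same local sequence $0\to B\,da\to\Omega_{A/K}\otimes_A B\to\Omega_{B/K}\to B\,dt\to 0$ and verify exactness there. The Frobenius-factorization approach buys canonicity without computation; your approach makes the ``characteristic-$p$ miracle'' completely explicit and shows exactly why the coefficients match.

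One small gap to patch in your injectivity step: $da=0$ in $\Omega_{K(Y)/K}$ only gives $a\in K\cdot K(Y)^p$, not $a\in K(Y)^p$. To finish, use that $S$ is geometrically integral: if $a=\sum k_i y_i^p$ with $k_i\in K$, $y_i\in K(Y)$, then in the domain $K(S)\otimes_K K^{1/p}$ one has $t=\sum k_i^{1/p}y_i\in K(Y)\otimes_K K^{1/p}$, whence $K(S)\otimes_K K^{1/p}\simeq (K(Y)\otimes_K K^{1/p})[T]/(T-t)^p$ is nonreduced, a contradiction. (The paper hides this same point inside the phrase ``which is clearly exact''.)
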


\begin{proof} The second part $\pi^{*}\Omega_{Y/K}\to \Omega_{S/K}\rightarrow
\Omega_{S/Y}\to 0$ is canonical and always exact.  Let us show the existence of 
a complex $0\to F_S^*\Omega_{S/Y} \to \pi^*\Omega_{Y/K}\to \Omega_{S/K}$ 
and prove the exactness under the assumption of the lemma. 

As $\pi$ is purely inseparable of degree $p$, we have
the inclusions of functions fields $K(S)^p \subseteq K(Y)\subseteq
K(S)$, hence $F_{S/K} : S\to S^{(p)}$ factors through $\pi : S\to Y$ 
and some $f: Y\to S^{(p)}$ (which is in fact the normalisation map). 
We have a canonical commutative diagram 
$$
\xymatrix{
S \ar[r]_{F_{S/K}} \ar@/^/[rr]^{F_S} \ar[d]_{\pi} &  S^{(p)}\ar[d]^{\pi^{(p)}} 
\ar[r]_{q} & S \ar[d]^\pi \\ 
Y  \ar[r] \ar[ur]_{f}    &  Y^{(p)} \ar[r] & Y}
$$
where $q: S^{(p)}\to S$ is the projection map. We have 
$q^*\Omega_{S/Y}=\Omega_{S^{(p)}/Y^{(p)}}$ because the last square is \emph{Cartesian},
and a canonical map $f^*\Omega_{S^{(p)}/Y^{(p)}}\to \Omega_{Y/Y^{(p)}}$,
hence a canonical map
$F_S^*\Omega_{S/Y}=\pi^*f^*\Omega_{S^{(p)}/Y^{(p)}}\to \pi^*\Omega_{Y/Y^{(p)}}$. 
Note that the canonical map 
$F_{Y/K}^*\Omega_{Y^{(p)}/K}\to \Omega_{Y/K}$ is identically zero, so
the canonical map $\Omega_{Y/Y^{(p)}}\to\Omega_{Y/K}$ is an isomorphism. 
Therefore we have a map 
$\Phi : F_S^{*}\Omega_{S/Y}\to \pi^{*}\Omega_{Y/K}$. Its composition 
with $\pi^*\Omega_{Y/K}\to \Omega_{S/K}$ is zero because locally it 
maps a differential form $db$ to $d(b^p)=0$. So 
$$0\to F_{S}^{*}\Omega_{S/Y}\to \pi^{*}\Omega_{Y/K}\to \Omega_{S/K}$$ 
is a complex. 

Let $s\in S$ and let $y=\pi(s)\in Y$. 
Then $A:=\cO_{Y,y}\to B:=\cO_{S,s}$ is a finite extension of discrete
valuation rings of degree $p$, so $B=A[T]/(T^p-a)$ for
some $a\in A$ (the element $a\in A$ is either a uniformizing element or
a unit whose class in the residue field of $A$ is a not a $p$-th power). 
The stalk of the complex \eqref{eq:omSY} becomes 
\begin{equation} \label{eq:local} 
0\to Bda \to \Omega_{A/K}\otimes_A B\to 
((\Omega_{A/K}\otimes_A B)\oplus BdT)/Bda \to BdT \to 0
\end{equation} 
which is clearly exact. This also shows that $\Omega_{S/Y}$ is locally free
of rank $1$. As a general fact, we then have 
$F_{S}^*\Omega_{S/Y}\simeq \Omega_{S/Y}^{\otimes p}$.
\end{proof}

\begin{Prop}\label{genus-c} Let $S, Y$ be normal projective geometrically 
integral curves over $K$ and let $\pi : S\to Y$ be a finite inseparable 
morphism of degree $p$. 
Let $\mathcal A=\mathrm{Ker}(\Omega_{S/K}\to \Omega_{S/Y})$. Then 
\begin{enumerate}
\item $\mathcal A=\Omega_{S/K,\tor}$ the torsion part of 
$\Omega_{S/K}$ and we have 
$$(p-1)\deg\det(\mathcal A)=2p(g(S)-g(Y));$$   
\item $\deg\det(\mathcal A)=\deg\mathcal A=\sum_{s\in S} (\Len_{\cO_{S,s}}\mathcal A_s)
[K(s) : K]=\dim_{K}H^{0}(S,\mathcal A)$; 
\item if $g(S)=g(Y)$, then $S$ is smooth over $K$.
\end{enumerate}
\end{Prop}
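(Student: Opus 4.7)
The plan is to split the four-term sequence of Lemma \ref{exact1} into two short exact sequences and derive (1) in tandem. Setting $\mathcal{A} = \mathrm{Im}(\pi^{*}\Omega_{Y/K}\to\Omega_{S/K})$ one gets
$$
0 \to \Omega_{S/Y}^{\otimes p} \to \pi^{*}\Omega_{Y/K} \to \mathcal{A} \to 0,
\qquad
0 \to \mathcal{A} \to \Omega_{S/K} \to \Omega_{S/Y} \to 0.
$$
From the second, $\Omega_{S/K}/\mathcal{A}=\Omega_{S/Y}$ is invertible, so $\mathcal{A}\supseteq\Omega_{S/K,\tor}$; from the first, $\mathcal{A}$ is the quotient of two generic rank-one sheaves, hence torsion, giving $\mathcal{A}=\Omega_{S/K,\tor}$.

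For the degree formula I would introduce the additive degree $\deg\mathcal{F}:=\chi(\mathcal{F})-r\chi(\cO)$ for a coherent sheaf of generic rank $r$ on a curve. Since $\pi$ is finite flat of degree $p$, the projection formula and Riemann--Roch applied to the rank-$p$ bundle $\pi_{*}\cO_{S}$ yield $\deg(\pi^{*}\mathcal{F})=p\deg\mathcal{F}$ for any coherent $\mathcal{F}$ on $Y$. Taking $\deg$ in the two sequences then gives
$$
\deg\mathcal{A}=p(\deg\Omega_{Y/K}-\deg\Omega_{S/Y})=\deg\Omega_{S/K}-\deg\Omega_{S/Y},
$$
so $(p-1)\deg\mathcal{A}=p(\deg\Omega_{S/K}-\deg\Omega_{Y/K})$. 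The formula in part (1) then reduces to the identity $\deg\Omega_{Z/K}=2g(Z)-2$ for $Z\in\{S,Y\}$.

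This last identity is the main obstacle on an imperfect base, where a normal curve may fail to be smooth and $\Omega_{Z/K}$ then carries torsion. I would argue it from the fact that a normal projective curve $Z$ is locally a complete intersection and hence satisfies $\det\Omega_{Z/K}\simeq\omega_{Z}$ via the conormal sequence and adjunction; combined with Serre duality $\deg\omega_{Z}=2g(Z)-2$, a short computation in the torsion/torsion-free decomposition $0\to T_Z\to\Omega_{Z/K}\to L_Z\to 0$ gives $\chi(\Omega_{Z/K})=\chi(\omega_Z)=g(Z)-1$, hence $\deg\Omega_{Z/K}=2g(Z)-2$.

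Parts (2) and (3) are then routine. For (2), the torsion sheaf $\mathcal{A}$ has $H^{1}(\mathcal{A})=0$, so $\deg\mathcal{A}=\chi(\mathcal{A})=\dim_K H^{0}(S,\mathcal{A})$, and $\det\mathcal{A}\simeq\cO_S\bigl(\sum_s\Len_{\cO_{S,s}}(\mathcal{A}_s)[s]\bigr)$ has the degree in the statement. For (3), $g(S)=g(Y)$ forces $\deg\mathcal{A}=0$ by (1), so the torsion sheaf $\mathcal{A}$ vanishes, $\Omega_{S/K}\simeq\Omega_{S/Y}$ is invertible of rank $1=\dim S$, and $S$ is smooth over $K$.
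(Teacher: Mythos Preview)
Your proof is correct and follows essentially the same route as the paper: split the four-term sequence of Lemma~\ref{exact1} into two short exact sequences, identify $\mathcal{A}$ with the torsion of $\Omega_{S/K}$, and then extract the degree formula using the key input $\det\Omega_{Z/K}\simeq\omega_{Z}$ for a normal (hence lci) curve $Z$. The only cosmetic difference is that the paper takes determinants first and then degrees of line bundles, obtaining $(\det\mathcal{A})^{\otimes(p-1)}\simeq\omega_{S/K}^{\otimes p}\otimes\pi^{*}\omega_{Y/K}^{-1}$, whereas you work throughout with the additive $\chi$-based degree on coherent sheaves; these are equivalent, and your more explicit justification of $\deg\Omega_{Z/K}=2g(Z)-2$ via the torsion/torsion-free decomposition is a welcome elaboration of a point the paper simply asserts.
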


\begin{proof} (1) Split the exact sequence \eqref{eq:omSY} into 
\begin{equation} \label{eq:split} 
0\to \Omega_{S/Y}^{\otimes p} \to 
\pi^*\Omega_{Y/S}\to \mathcal A\to 0
\end{equation}
and 
$$0\to \mathcal A\to \Omega_{S/K}\to \Omega_{S/Y}\to 0.$$
As $\Omega_{S/K}$ has rank $1$ (because $S$ is geometrically reduced) 
and $\Omega_{S/Y}$ is invertible, we 
have  $\mathcal A=\Omega_{S/K, \tor}$. 
As \emph{$\det\Omega_{S/K}=\omega_{S/K}$} and similarly for $\Omega_{Y/K}$, 
by taking the determinants in the two exact sequences we get
$$\pi^*\omega_{Y/K}=\det\mathcal{A}\otimes \omega_{S/Y}^{\otimes p},$$ and $$\omega_{S/K}=\det\mathcal{A}\otimes \omega_{S/Y}.$$ Hence
$$(\det\mathcal A)^{\otimes (p-1)}\simeq \omega_{S/K}^{\otimes p}
\otimes \pi^*\omega_{Y/K}^{-1}.$$
By Riemann-Roch, $\deg\omega_{S/K}=2g(S)-2$ (and similarly for
$Y$). Part (1) is then obtained by taking the degrees in the above 
isomorphism. 

(2) This is well known and can be proved locally at every stalk of
$\mathcal A$ (see {\it e.g.}, \cite{LLR}, Lemma 5.3(b)). 

(3) Finally, if $g(S)=g(Y)$, then $\deg \mathcal A=0$, hence 
$\mathcal A=0$. This implies that $\Omega_{S/K}$ free of rank one, 
hence $S$ is smooth over $K$. 
\end{proof}

\begin{Rem}\label{modulo p}
The support of $\mathcal{A}$ consists of singular (more precisely speaking, non-smooth) points
of $S$, and it is well known that such points are inseparable over $K$ 
(\cite{LQ}, Proposition 4.3.30). In particular $p \mid [K(s):K]$ for any $s\in \mathrm{Supp}(\mathcal{A})$.
\end{Rem}

\begin{Cor}\label{Tate} {\rm (Tate genus change formula)} Let $S$ be a normal 
projective geometrically integral curve over $K$. Let $L$ be an 
algebraic extension of $K$ and let $Y$ be the normalisation of $S_L$
(viewed as a curve over $L$). Then 
 $$p-1 \mid 2(g(S)-g(Y)).$$ 
\end{Cor}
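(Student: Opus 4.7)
I would reduce Tate's formula to Proposition~\ref{genus-c} through three reductions, and then apply the proposition in a degree-$p$ base case. First, writing $L = \varinjlim_\alpha L_\alpha$ over its finite subextensions, the integers $g((S_{L_\alpha})')$ form a non-increasing sequence of non-negative integers and so stabilize at some $\alpha_0$; by Proposition~\ref{genus-c}(3) applied to any degree-$p$ step within a cofinal filtration once the genera are constant, $(S_{L_{\alpha_0}})'$ must already be smooth over $L_{\alpha_0}$, and smoothness is preserved under arbitrary further base change, giving $Y \cong ((S_{L_{\alpha_0}})')_L$ with the same genus. This reduces to $L/K$ finite. Next, letting $K \subseteq K_s \subseteq L$ with $K_s/K$ separable algebraic and $L/K_s$ purely inseparable, separable algebraic base change preserves normality (being \'etale-local) and commutes with the formation of $H^i(\cO)$, so $S_{K_s}$ is already normal with $g(S_{K_s}) = g(S)$. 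Thus we may assume $[L : K] = p^n$.

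I would then induct on $n$. For $n = 0$ the statement is vacuous. For $n \geq 1$, choose an intermediate field $K \subsetneq L_1 \subseteq L$ with $[L_1 : K] = p$ and set $Y_1 := (S_{L_1})'$. Since $S$ is geometrically integral over $K$, so is $Y_1$ over $L_1$; and the function field of $(Y_1)_L$ coincides with that of $S_L$ (both equal $K(S)\cdot L$ inside any sufficiently large field), so $Y$, being normal, is the normalization of $(Y_1)_L$. The induction hypothesis applied to the extension $L/L_1$ and the curve $Y_1/L_1$ then gives $(p-1) \mid 2(g(Y_1) - g(Y))$, reducing the problem to the base case $[L : K] = p$.

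For the base case --- which I expect to be the main obstacle --- I would apply Proposition~\ref{genus-c} to the unique lift $\bar F : S \to (S^{(p)})'$ of the relative Frobenius $F_{S/K} : S \to S^{(p)}$ through the normalization $(S^{(p)})' \to S^{(p)}$, obtained using that $S$ is normal. Both $S$ and $(S^{(p)})'$ are normal projective geometrically integral curves over $K$, and $\bar F$ is finite purely inseparable of degree $p$, so Proposition~\ref{genus-c}(1) gives $(p-1)\deg\det\cA = 2p(g(S) - g((S^{(p)})'))$; since $\gcd(p-1, p) = 1$, this yields $(p-1) \mid 2(g(S) - g((S^{(p)})'))$. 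The technical heart is then the identification $g((S^{(p)})') = g(Y_1)$: one must show that the ``Frobenius-twist-then-normalize'' and the ``base-change-then-normalize'' procedures produce curves with the same arithmetic genus, even though they live over different base fields ($K$ versus $L_1$) and need not be isomorphic as schemes. I would establish this by a local analysis at each non-smooth point $s \in S$, verifying that the normalization $\delta$-invariant at $s$ is intrinsic to the local ring $\cO_{S,s}$ and yields the same numerical contribution in both constructions; combining the two divisibilities then gives $(p-1) \mid 2(g(S) - g(Y_1))$ as required.
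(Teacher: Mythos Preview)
Your reduction to finite extensions and then to purely inseparable extensions is fine (though in the first step, the curve $(S_{L_{\alpha_0}})'$ need not be \emph{smooth} over $L_{\alpha_0}$; what is true and sufficient is that further base change within $L$ preserves normality, since the total $\delta$-invariant vanishes). The real problem is in your base case.

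The identification $g((S^{(p)})')=g((S_L)')$ is \emph{false} for an arbitrary purely inseparable extension $L/K$ of degree $p$, and no local analysis will rescue it, because the $\delta$-invariant of $S_L$ above a non-smooth point $s\in S$ genuinely depends on $L$, not only on $\cO_{S,s}$. Concretely, take $p>2$, $K=k(a,b)$ with $k$ perfect and $a,b$ $p$-independent, and let $S$ be the normal projective model of $y^2=x^p-a$. Then $S$ is normal of genus $(p-1)/2$. For $L=K(b^{1/p})$ the curve $S_L$ is still normal (as $a\notin L^p$), so $g((S_L)')=(p-1)/2$; but over $K(a^{1/p})\subset K^{1/p}$ one has $y^2=(x-a^{1/p})^p$, whose normalisation is rational, so $g((S^{(p)})')=g((S_{K^{1/p}})')=0$. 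Thus $g((S^{(p)})')\neq g((S_L)')$.

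What \emph{is} true, and what the paper exploits, is the identification $g((S^{(p)})')=g((S_{K^{1/p}})')$: the isomorphism $\rho:K^{1/p}\xrightarrow{\sim}K$, $x\mapsto x^p$, transports $S_{K^{1/p}}$ to $S^{(p)}$ scheme-theoretically, so their normalisations have equal genus. Proposition~\ref{genus-c} therefore gives the divisibility directly for the tower $K\subset K^{1/p}\subset K^{1/p^2}\subset\cdots$, and one runs the tower until the normalisation becomes smooth. For a general finite purely inseparable $L/K$, the paper does \emph{not} reduce to a degree-$p$ subextension of $L$; instead it sandwiches $L\subset K^{1/p^m}\subset L^{1/p^m}$ with $m$ large enough that the normalisation of $S_{K^{1/p^m}}$ is smooth, so that further base change to $L^{1/p^m}$ preserves it. Applying the tower result to $S/K$ and to $Y/L$ separately and matching the smooth endpoints gives $(p-1)\mid 2(g(S)-g(Y))$. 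Your inductive scheme cannot replace this sandwich, because the degree-$p$ step available inside $L$ need not be the one that interacts with the non-smooth locus of $S$.
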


\begin{proof} 
The result will be derived from Proposition~\ref{genus-c}. 
We can suppose $L/K$ is purely inseparable. Let us 
first treat the case $L=K^{1/p}$. Decompose the absolute Frobenius
$K\to K$, $x\mapsto x^p$ as 
$$K\stackrel{i}{\to} K^{1/p} \stackrel{\rho}{\to} K$$
where $i$ is the canonical inclusion and $\rho$ is an isomorphism. 
Let us extend $Y$ to $Y_K:=Y\otimes_{L}K$ using $\rho$. Then $Y_K$
is a normal projective and geometrically integral curve over $K$,
of arithmetic genus (over $K$) equal to that of $Y$ over $L$. Moreover
$Y_K$ is birational to $(S_L)\otimes_L K=S^{(p)}$. So we have 
an inseparable finite morphism $S\to Y_K$ of degree $p$. By 
Proposition~\ref{genus-c}(1), we have $(p-1) \mid 2(g(S)-g(Y))$
and $g(S)>g(Y)\ge 0$
unless $S$ is already smooth over $K$. Repeating the same argument, 
for any $n\ge 1$, if $S_n$ denotes the normalisation of $S_{K^{1/p^n}}$, 
then $p-1$ divides $2(g(S)-g(S_n))$, and $S_n$ is smooth over $K^{1/p^n}$ if
$n$ is big enough. 

Now let $L/K$ be a finite purely inseparable extension. Then 
there exists $m\ge 1$ such that $L\subseteq K^{1/p^m}\subseteq L^{1/p^m}$
and $S_m$ is smooth. This implies that the normalisation 
$Y_m$ of $Y_{L^{1/p^{m}}}$ is $(S_m)_{L^{1/p^m}}$. On the other hand,
applying the previous result to the $L$-curve $Y$ instead of 
$S$, we see that $p-1$ divides $2(g(Y)-g(Y_m))$. As 
$g(Y_m)=g(S_m)$, we find that $p-1$ divides $2(g(S)-g(Y))$. 
The case of any algebraic extension follows immediately. 
\end{proof}

\begin{Lem}\label{torsion} Let $\pi: S\to  Y$ be as in Lemma~\ref{exact1}. 
\begin{enumerate}
\item We have 
$$p\deg\Omega_{Y/K, \tor} \le \deg \Omega_{S/K, \tor}.$$ 
\item Let $s\in S$ and let $y=\pi(s)$. Suppose that $K(y)=K(s)$, then 
$$\Len_{\cO_{S,s}}(\Omega_{S/K, \tor})_s\ge p\dim_{K(s)}\Omega_{K(s)/K}. $$ 
\end{enumerate}
\end{Lem}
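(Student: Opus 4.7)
For part (1), my plan is the following. The exact sequence~\eqref{eq:split} from the proof of Proposition~\ref{genus-c} realizes $\mathcal{A}=\Omega_{S/K,\tor}$ as the quotient of $\pi^*\Omega_{Y/K}$ by the invertible (hence torsion-free) subsheaf $\Omega_{S/Y}^{\otimes p}$. Since $\pi$ is finite flat between normal curves, $\pi^*$ is exact and preserves the ``torsion sub-sheaf plus torsion-free quotient'' decomposition: the torsion of $\pi^*\Omega_{Y/K}$ is precisely $\pi^*\Omega_{Y/K,\tor}$, and since it meets the torsion-free subsheaf trivially, it injects into $\mathcal{A}$. Computing degrees, $p\deg\Omega_{Y/K,\tor}=\deg\pi^*\Omega_{Y/K,\tor}\le\deg\mathcal{A}$, the first equality using that $\pi_*\cO_S$ is locally free of rank $p$ (so $\pi^*$ multiplies stalkwise lengths by $p$).

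For part (2), I will work locally: set $A=\cO_{Y,y}$ and $B=\cO_{S,s}=A[T]/(T^p-a)$ as in the proof of Lemma~\ref{exact1}. The hypothesis $K(s)=K(y)$ rules out the case ``$a$ is a unit whose residue is not a $p$-th power'' (that case would make $K(s)=K(y)[T]/(T^p-\bar a)$ a proper degree-$p$ inseparable extension). Hence $a=\pi_A$ is a uniformizer of $A$, $T$ is a uniformizer of $B$, and $\pi_A=T^p$. Decompose $\Omega_{A/K}=A\omega\oplus\Omega_{A/K,\tor}$ into torsion-free rank-one and torsion parts on the DVR $A$, and write $d\pi_A=g\omega+\tau$ with $g\in A$, $\tau\in\Omega_{A/K,\tor}$. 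Geometric integrality of $S$ forces $g\ne 0$: otherwise $d\pi_A=0$ in the one-dimensional $\Omega_{K(A)/K}$, so $\pi_A\in K\cdot K(A)^p$ (the kernel of $d$ on a separably generated extension); but that would give $v_A(\pi_A)=p\,v_A(b)$ for some $b$, contradicting $v_A(\pi_A)=1$ (using $K^\times\subseteq A^\times$).

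The local form of~\eqref{eq:omSY} now reads $\mathcal{A}_s=(\Omega_{A/K}\otimes_A B)/Bd\pi_A$. Using that $B$ is free of rank $p$ over $A$, a direct computation with the decomposition above yields a short exact sequence
$$0\to\Omega_{A/K,\tor}\otimes_A B\to\mathcal{A}_s\to B/gB\to 0,$$
whence $\Len_B\mathcal{A}_s=p(\Len_A\Omega_{A/K,\tor}+v_A(g))$ via $v_B(g)=p\,v_A(g)$ and $K(s)=K(y)$. Finally, I read off the conormal sequence $\mathfrak{m}_A/\mathfrak{m}_A^2\to\Omega_{A/K}\otimes_A K(y)\to\Omega_{K(y)/K}\to 0$: its codomain has $K(y)$-dimension $1+\mu(\Omega_{A/K,\tor})$, while its image is spanned by $\overline{d\pi_A}=\bar g\omega+\bar\tau$. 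A short case analysis concludes the argument: if $\bar g$ or $\bar\tau$ is nonzero, then $\dim_{K(y)}\Omega_{K(y)/K}=\mu(\Omega_{A/K,\tor})$ and the elementary bound $\mu\le\Len$ suffices; if both vanish, then $v_A(g)\ge 1$ and $\dim_{K(y)}\Omega_{K(y)/K}=1+\mu$, dispatched again by $\mu\le\Len$ together with the extra $v_A(g)\ge 1$. The main obstacle is this final case analysis, specifically tracking whether the torsion component $\tau$ of $d\pi_A$ survives modulo $\pi_A\Omega_{A/K,\tor}$, which controls the dimension of the conormal image.
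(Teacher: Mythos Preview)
Your argument for part~(1) is essentially the paper's: both observe that $\pi^*\Omega_{Y/K,\tor}$ injects into $\mathcal A=\Omega_{S/K,\tor}$ via the exact sequence~\eqref{eq:split} (the image misses the torsion-free subsheaf $\Omega_{S/Y}^{\otimes p}$), then take degrees.

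For part~(2) your route is different and more elaborate. The paper avoids splitting $\Omega_{A/K}$ entirely: since $\mathcal A_s=(\Omega_{A/K}/Ad\pi_A)\otimes_A B$ and the conormal sequence identifies $(\Omega_{A/K}/Ad\pi_A)\otimes_A K(y)$ with $\Omega_{K(y)/K}$, one gets a surjection $\mathcal A_s\twoheadrightarrow \Omega_{K(y)/K}\otimes_{K(y)}B/\pi_A B$ in one line, and the target visibly has $B$-length $p\dim_{K(y)}\Omega_{K(y)/K}$. Your approach instead yields the \emph{exact} formula $\Len_B\mathcal A_s=p(\Len_A\Omega_{A/K,\tor}+v_A(g))$, which is strictly more information, and then compares with $\dim\Omega_{K(y)/K}$ via the case split. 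So your method is longer but buys an equality rather than an inequality.

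There is, however, a genuine gap in your justification of $g\ne 0$. You claim that $\pi_A\in K\cdot K(Y)^p$ forces $v_A(\pi_A)\in p\mathbb Z$, implicitly treating the compositum $K\,K(Y)^p$ as the set of products $cb^p$. This is false: take $K=k(\sigma)$, $Y=\mathbb P^1_K$ with coordinate $t$, and $y$ the point $t^p-\sigma=0$; then $\pi_A=t^p-\sigma\in K\,K(Y)^p=k(\sigma)(t^p)$ yet $v_A(\pi_A)=1$. The correct (and simpler) reason $g\ne 0$ holds is precisely the one you invoke but do not exploit: since $K(S)=K(Y)[T]/(T^p-\pi_A)$, one has $\Omega_{K(S)/K}=(\Omega_{K(Y)/K}\otimes K(S)\oplus K(S)dT)/K(S)d\pi_A$, which is one-dimensional (i.e.\ $S$ is geometrically reduced) iff the image of $d\pi_A$ in $\Omega_{K(Y)/K}$ is nonzero, i.e.\ iff $g\ne 0$. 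With this fix, your argument goes through.
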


\begin{proof} (1) Denote $\mathcal A= \Omega_{S/K, \tor}$ and $\mathcal B= \Omega_{Y/K, \tor}$. Let $s\in S$ and $y=\pi(s)$. 
The canonical map 
$\mathcal B_y\otimes\cO_{S,s}=\pi^*(\mathcal B)_s \to \mathcal A_s$ is injective 
by the exact sequence \eqref{eq:split}, because 
$\Omega_{S/Y}^{\otimes p}$ is torsion-free. Therefore 
$$e_{s}\Len_{\cO_{Y,y}}(\mathcal B_y)=\Len_{\cO_{S,s}}(\mathcal B_y\otimes \cO_{S,s})\le \Len_{\cO_{S,s}}(\mathcal A_s)$$ 
where $e_s$ is the ramification index of $\cO_{Y,y}\to \cO_{S,s}$. The desired inequality 
holds because $p=e_s[K(s):K(y)]$. 

(2) Let $A=\cO_{Y,y}$, $B=\cO_{S,s}$. As $K(y)=K(s)$, $A\to B$ has ramification 
index $p$. So $B=A[T]/(T^p-t)$ for some uniformizing element $t$ of $A$. 
The exact sequence \eqref{eq:local} gives the exact sequence 
$$ 0\to (\Omega_{A/K}/Adt) \otimes_A B \to \Omega_{B/K}=((\Omega_{A/K}/Adt) \otimes_A B) \oplus BdT.$$  
In particular, 
\begin{equation}\label{eq:torsion2} 
\mathcal A_s = (\Omega_{A/K}/Adt) \otimes_A B.
\end{equation}
The usual exact sequence 
$$ tA/t^2A \to \Omega_{A/K}\otimes_A K(y) \to \Omega_{K(y)/K}\to 0,$$ 
implies we have a surjective map
$$ \mathcal A_s\twoheadrightarrow \Omega_{K(y)/K}\otimes_A
B=\Omega_{K(y)/K}\otimes_{K(y)} B/tB.$$ 
So 
$$\Len_{B}\mathcal A_s\ge p\dim_{K(y)}\Omega_{K(y)/K}=p
\dim_{K(s)}\Omega_{K(s)/K}.$$ 
\end{proof}

\begin{Cor} \label{remark on decreasing by p} Let $S=S_0\to S_1 \to \cdots \to S_n$ be a tower of 
inseparable covers of degree $p$ of geometrically integral 
normal projective  curves over $K$. Let $g_i=p_a(S_i)$. Then 
$g_{i+1}-g_i \le (g_i-g_{i-1})/p.$ In particular $\deg \Omega_{S/K,\tor}=2p(g_0-g_1)/(p-1)>2(g_{0}-g_{n})$ by Proposition \ref{genus-c}.\end{Cor}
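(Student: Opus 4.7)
The approach is to iterate the two results just established along the tower. Setting $\mathcal{A}_i := \Omega_{S_i/K,\tor}$, Proposition~\ref{genus-c}(1) applied to each inseparable degree-$p$ map $\pi_i : S_i \to S_{i+1}$ yields
\[ (p-1)\deg\mathcal{A}_i = 2p(g_i - g_{i+1}), \]
while Lemma~\ref{torsion}(1) applied to the same $\pi_i$ (in its notation, $S=S_i$ and $Y=S_{i+1}$) gives $p\deg\mathcal{A}_{i+1}\le\deg\mathcal{A}_i$. Eliminating $\mathcal{A}_i$ and $\mathcal{A}_{i+1}$ between these two relations produces the recursion $p(g_{i+1}-g_{i+2})\le g_i - g_{i+1}$ on successive genus drops, i.e.\ (after a shift of indices) the stated inequality. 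Iterating from $i=0$ then gives the uniform geometric bound $g_i - g_{i+1}\le (g_0 - g_1)/p^i$ for every $i\ge 0$.

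For the ``in particular'' clause, the equality $\deg\Omega_{S/K,\tor} = 2p(g_0-g_1)/(p-1)$ is precisely the first identity above for $i=0$. Telescoping $g_0-g_n$ and summing the geometric upper bounds,
\[ g_0 - g_n \;=\; \sum_{i=0}^{n-1}(g_i - g_{i+1}) \;\le\; (g_0-g_1)\sum_{i=0}^{n-1}p^{-i} \;<\; \frac{p(g_0-g_1)}{p-1}, \]
and doubling identifies the right-hand side with $\deg\Omega_{S/K,\tor}$, producing the strict inequality claimed.

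I do not foresee any substantive obstacle: the two ingredients apply at every stage precisely because each $\pi_i$ is, by hypothesis, a finite inseparable morphism of degree $p$ between normal projective geometrically integral curves over $K$, and everything else reduces to a geometric-series computation. The only point requiring care is the strictness of the last inequality, which holds whenever $g_0 > g_1$; the degenerate case $g_0 = g_1$ is forced by Proposition~\ref{genus-c}(3) to make $S$ smooth over $K$ (and in fact trivializes the whole tower, since the same argument iterates), so that both sides of the ``in particular'' statement vanish and the assertion is empty. Outside this mild degeneracy the conclusion is immediate.
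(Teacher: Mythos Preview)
Your proof is correct and is exactly the argument the paper intends: the corollary is stated without proof immediately after Lemma~\ref{torsion}, and the implicit reasoning is precisely to combine Proposition~\ref{genus-c}(1) with Lemma~\ref{torsion}(1) at each step of the tower, then telescope. Your observation about the degenerate case $g_0=g_1$ (where both sides vanish and the strict inequality becomes an equality) is a valid caveat; the paper tacitly excludes it, since in every application $S$ is assumed non-smooth.
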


Lemma~\ref{torsion}(2) is not used in the sequel. But we think 
it can be of some interest in the understanding of genus
changes. It implies immediately \cite{Sal}, Corollary 3.3.
\begin{Def}\label{g-r}
We call a curve $S$ over $K$ 
\emph{geometrically rational} if $S_{\bar{K}}$ is integral with 
normalisation isomorphic to $\mathbb P^1_{\bar{K}}$.
\end{Def}


A slightly weaker version of the next corollary can also be found in \cite{Sal}, Corollary 3.2.

\begin{Cor} \label{complete}
Let $S$ be a projective normal and geometrically rational curve over $K$ of
(arithmetic) genus $g$. Suppose that $S$ is not smooth. 
Let $Y$ be the normalisation of $S^{(p)}$. 
\begin{enumerate}
\item We have $2g\geq (p-1)$. If $2g= p-1$, then $Y$ is a smooth conic, 
Moreover, $S$ has exactly one non-smooth point, the latter being 
of degree $p$ over $K$.
\item If $g< (p^2-1)/2$, then $Y$ is a smooth conic over $K$. In particular, we have  $\deg \Omega_{S/K, \tor}=2pg/(p-1)$.
\end{enumerate}
\end{Cor}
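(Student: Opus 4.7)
The plan is to apply Proposition~\ref{genus-c} to the inseparable morphism $\pi:S\to Y$ of degree $p$ obtained by factoring the relative Frobenius $F_{S/K}:S\to S^{(p)}$ through the normalisation $Y\to S^{(p)}$. Since $S$ is geometrically rational (in particular geometrically integral), so is $S^{(p)}$; the normalisation $Y$ is then a normal projective geometrically integral curve over $K$, and moreover geometrically rational (because $Y_{\bar K}$ has the same normalisation $\mathbb{P}^1_{\bar K}$ as $S^{(p)}_{\bar K}\simeq S_{\bar K}$). This lets the same setup be iterated: let $Y'$ denote the normalisation of $Y^{(p)}$, with an analogous inseparable map $Y\to Y'$ of degree $p$.

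For part (1), Proposition~\ref{genus-c}(3) gives $g>g(Y)\ge 0$ since $S$ is not smooth, and Proposition~\ref{genus-c}(1) forces $(p-1)\mid 2(g-g(Y))$. Together these imply $2g\ge 2(g-g(Y))\ge p-1$. In the borderline case $2g=p-1$ all inequalities are equalities, forcing $g(Y)=0$; and $Y$ must then be smooth, because otherwise the same reasoning applied to $Y$ would give $2g(Y)\ge p-1>0$. A smooth, geometrically integral projective curve of genus $0$ over $K$ embeds as a plane conic via $|\omega_Y^{-1}|$. For the ``moreover'' clause, Proposition~\ref{genus-c}(1) yields $\deg\mathcal{A}=2pg/(p-1)=p$. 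By Remark~\ref{modulo p}, every non-smooth point $s$ of $S$ satisfies $p\mid[K(s):K]$, so by Proposition~\ref{genus-c}(2) it contributes at least $(\Len_{\cO_{S,s}}\mathcal{A}_s)\,[K(s):K]\ge p$ to $\deg\mathcal{A}$. Since the total equals $p$, there is exactly one such point $s$, with $\Len_{\cO_{S,s}}\mathcal{A}_s=1$ and $[K(s):K]=p$.

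For part (2), the argument goes by contradiction: suppose $Y$ is not smooth. Then part~(1) applied to $Y$ gives $g(Y)\ge(p-1)/2$. Combining Lemma~\ref{torsion}(1) (for $Y\to Y'$) with Proposition~\ref{genus-c}(1) (for both $S\to Y$ and $Y\to Y'$) produces the factor-of-$p$ contraction
\[
g(Y)-g(Y')\ \le\ \frac{g-g(Y)}{p},
\]
which is exactly the content of Corollary~\ref{remark on decreasing by p}. Since $Y$ is not smooth, $g(Y)-g(Y')\ge(p-1)/2$ by Proposition~\ref{genus-c}(1) and (3), whence $g-g(Y)\ge p(p-1)/2$, and therefore
\[
g\ \ge\ \frac{p-1}{2}+\frac{p(p-1)}{2}\ =\ \frac{p^2-1}{2},
\]
contradicting the hypothesis. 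Hence $Y$ is smooth; the argument from part~(1) then forces $g(Y)=0$, so $Y$ is a smooth conic, and the formula $\deg\Omega_{S/K,\tor}=2pg/(p-1)$ is immediate from Proposition~\ref{genus-c}(1). The only subtle step is confirming that geometric integrality and geometric rationality are preserved under Frobenius twist followed by normalisation, which legitimises iterating part~(1); this is essentially a brief check.
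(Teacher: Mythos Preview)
Your argument is correct and follows essentially the same route as the paper's proof: Proposition~\ref{genus-c} for part~(1), and for part~(2) a contradiction via Lemma~\ref{torsion}(1) combined with the lower bound $g(Y)\ge (p-1)/2$ from part~(1). Two small remarks: the instance of Lemma~\ref{torsion}(1) you need is for $S\to Y$ (giving $p\deg\Omega_{Y/K,\tor}\le\deg\Omega_{S/K,\tor}$), not for $Y\to Y'$ as you write---though your citation of Corollary~\ref{remark on decreasing by p} already encodes the correct statement; and the sentence ``the argument from part~(1) then forces $g(Y)=0$'' is slightly misleading---what actually gives $g(Y)=0$ is that $Y$ is smooth and geometrically rational, not the inequality of part~(1).
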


\begin{proof} (1) This is an immediate consequence of 
Proposition~\ref{genus-c}. 

(2) If $Y$ is not smooth, as non-smooth points have inseparable residue fields
(see {\it e.g.} \cite{LQ}, Prop 4.3.30), we have 
$\deg\Omega_{S/K, \tor}\ge p\deg \Omega_{Y/K, \tor}\ge p^2$  by Lemma~\ref{torsion}(1). 
So $g\ge g(Y)+p(p-1)/2\ge (p^2-1)/2$  since $g(Y)\geq (p-1)/2$, contradiction. So $Y$ is smooth. In particular, $Y$ is a smooth conic because $S$ is assumed to be geometrically rational.
\end{proof}

\subsection{Flat double covers}\label{flat double cover}
\setlength{\parindent}{2em}We recall some basic facts on flat double covers. One can also consult 
\cite{C-D}, \S 0 or \cite{BPV}, III, \S 6-7  for a standard introduction. 

\begin{Def}
A finite morphism between noetherian schemes $f:S\rightarrow Y$ is called 
a \emph{flat double cover} if $f_{*}\mathcal{O}_{S}$ is locally free 
of rank $2$ over $\cO_Y$.
\end{Def}

For our purpose we suppose that $Y$ is an \emph{integral noetherian}
scheme defined over a field $K$ of characteristic
\emph{different from $2$} in this subsection. 

\begin{Construction}\label{Ls} 
Flat double covers of $Y$ can be constructed as 
follows. Choose an invertible sheaf $\mathcal L$ on $Y$, and choose  
$s\in H^{0}(Y, \mathcal L^{\otimes 2})=Hom_{\cO_Y}(\cL^{-2},\cO_Y)$. Endow the $\cO_Y$-module 
$\cO_Y\oplus \cL^{-1}$ with the $\mathcal{O}_{Y}$-algebra structure 
by 
$$\cL^{-1}\times \cL^{-1}\to \cL^{\otimes (-2)} \stackrel{e(s)}\to \cO_Y$$
where $e(s)$ is the evaluation at $s$. 
Then $S:=\mathrm{Spec}(\cO_Y\oplus \cL^{-1})$ is a flat double over of 
$Y$. Note that if we replace $s$ with $a^2s$ for some $a\in
H^0(Y,\cO_Y)^*$, then we get a flat double cover isomorphic to 
the initial one.  We call the invertible sheaf $\cL$ above as  \emph{the associated invertible sheaf} of $f$.

Conversely, if $f:S\rightarrow Y$ is a flat double
cover, we have a trace morphism: $f_*\cO_S\rightarrow \cO_Y$, since $p\neq 2$, this trace morphism splits $f_*\cO_S$ into direct sum $\cO_Y\oplus \cL^{-1}$, where $\cL^{-1}$ is the kernel of the trace morphism. Now it is clear that the $\cO_Y$-algebra structure of $f_*\cO_S$ is given by $\cL^{-1}\times \cL^{-1}\to \cO_Y$ as any elements in $\cL^{-1}$ has null trace. 
For the cover $S\to Y$ defined as above, if $s\ne 0$, 
$S$ is reduced and $S\to Y$ is generically \'etale, the \emph{branch divisor} 
is equal to $B:=\mathrm{div}(s)$.  
\end{Construction}
From this construction we immediately obtain the formula of dualizing sheaf.
\begin{Cor}\label{formula of dualizing sheaf}
We have $\omega_{S/Y}=f^*\cL$. \qed
\end{Cor}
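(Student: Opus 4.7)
The plan is to use the standard description of the relative dualizing sheaf for a finite flat morphism. Since $f$ is finite flat, one has
$$\omega_{S/Y}=\mathcal{H}om_{\cO_Y}(f_*\cO_S,\cO_Y),$$
where the $\cO_S$-module structure on the right arises from viewing it as an $f_*\cO_S$-module via $(x\cdot\phi)(y):=\phi(xy)$. By Construction~\ref{Ls}, $f_*\cO_S=\cO_Y\oplus\cL^{-1}$, so dualizing gives $\omega_{S/Y}\cong\cO_Y\oplus\cL$ as an $\cO_Y$-module. On the other hand, the projection formula produces $f_*(f^*\cL)\cong f_*\cO_S\otimes_{\cO_Y}\cL\cong\cL\oplus\cO_Y$. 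So the two sheaves already agree as $\cO_Y$-modules, and the content of the statement is the matching of $f_*\cO_S$-module structures.

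To verify the match, I would carry out a local calculation. On an open $U\subseteq Y$ where $\cL^{-1}$ is trivialized by a generator $e$, write $t\in\cO_Y(U)$ for the image of $e^{\otimes 2}$ under $s$, so that $f_*\cO_S|_U=\cO_U[e]/(e^2-t)$. With $\{1^*,e^*\}$ the dual basis of $\omega_{S/Y}|_U$, a direct computation yields $e\cdot e^*=1^*$ and $e\cdot 1^*=te^*$ (for instance $(e\cdot 1^*)(e)=1^*(e^2)=t$). On the other side, $f_*(f^*\cL)|_U$ has $\cO_U$-basis $\tilde{e}$, $e\tilde{e}$, where $\tilde{e}\in\cL|_U$ is dual to $e$, and one checks that $e\cdot\tilde{e}=e\otimes\tilde{e}$ equals $1$ under the canonical isomorphism $\cL^{-1}\otimes\cL\cong\cO_Y$, while $e\cdot(e\tilde{e})=e^2\tilde{e}=t\tilde{e}$. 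The assignment $e^*\leftrightarrow\tilde{e}$, $1^*\leftrightarrow e\tilde{e}$ thus defines an isomorphism of $f_*\cO_S|_U$-modules.

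The main step that requires care is gluing these local isomorphisms to a global one. Under a change of trivialization $e\mapsto ue$ with $u\in\cO_Y(U)^\times$, both $e^*$ and $\tilde{e}$ transform by $u^{-1}$, while $1^*$ and $e\tilde{e}$ are invariant. Hence the matching is independent of the chosen local trivialization and extends to a canonical global isomorphism $\omega_{S/Y}\cong f^*\cL$ of $\cO_S$-modules. The only real obstacle is this cocycle bookkeeping; once it is in place, the corollary is immediate.
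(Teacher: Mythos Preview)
Your argument is correct: the identification $f_*\omega_{S/Y}\cong\mathcal{H}om_{\cO_Y}(f_*\cO_S,\cO_Y)$ for finite flat $f$, together with the explicit algebra description $f_*\cO_S|_U=\cO_U[e]/(e^2-t)$, yields the local isomorphism you wrote down, and your cocycle check shows it globalizes. One small remark on exposition: the clause ``$e\cdot\tilde e=e\otimes\tilde e$ equals $1$ under the canonical isomorphism $\cL^{-1}\otimes\cL\cong\cO_Y$'' reads ambiguously; what you need for the module comparison is simply $e\cdot\tilde e=e\tilde e$ and $e\cdot(e\tilde e)=t\tilde e$, which matches $e\cdot e^*=1^*$ and $e\cdot 1^*=te^*$. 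The identification of $e\tilde e$ with the unit of the $\cO_Y$-summand is a separate (and inessential) observation.

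As for comparison with the paper: there is nothing to compare. The corollary carries only a \qed\ and no argument; the author treats the formula as part of the standard package on flat double covers imported from \cite{C-D}, \S 0 and \cite{BPV}, III, \S 6--7. Your write-up is exactly the computation those references contain, so you have simply supplied the details the paper chose to suppress.
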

\begin{Cor} \label{genus}  
  \begin{enumerate}
\item If $f: Y'\to Y$ is a morphism of integral noetherian schemes,  then 
$S\times_Y Y'\to Y'$ is a flat double cover obtained by 
$f^*\cL$ and $f^*s\in H^0(Y', (f^*\cL)^{\otimes 2})$. 
  \item If $Y$ is a geometrically connected smooth projective curve over $K$, and
    $S\rightarrow Y$ is a flat double cover with branch divisor $B$,
    then 
$$p_{a}(S)=2p_{a}(Y)-1+\deg (B)/2.$$
\item If $Y$ is a geometrically connected smooth projective surface over $K$, then:
$$\chi(\mathcal{O}_{S})=\chi(\mathcal{O}_{Y})+\chi(\mathcal{L}^{-1})=2\chi(\mathcal{O}_{Y})+(B^2+2B\cdot
K_{Y})/8$$
where $K_Y$ is the canonical divisor of $Y$. \qed
  \end{enumerate}
\end{Cor}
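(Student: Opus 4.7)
The plan is to leverage the splitting $f_{*}\mathcal O_S=\mathcal O_Y\oplus \mathcal L^{-1}$ from Construction~\ref{Ls}, so that every computation of cohomology on $S$ reduces to the cohomology of two invertible sheaves on $Y$. For part (1), I would simply observe that taking $\mathrm{Spec}$ of a quasi-coherent sheaf of algebras commutes with flat base change: pulling the algebra $\mathcal O_Y\oplus \mathcal L^{-1}$ back along $f:Y'\to Y$ yields $\mathcal O_{Y'}\oplus f^{*}\mathcal L^{-1}$, and the multiplication $\mathcal L^{-1}\otimes \mathcal L^{-1}\xrightarrow{e(s)}\mathcal O_Y$ pulls back to $f^{*}\mathcal L^{-1}\otimes f^{*}\mathcal L^{-1}\xrightarrow{e(f^{*}s)}\mathcal O_{Y'}$ by functoriality, giving exactly the flat double cover associated to the data $(f^{*}\mathcal L,f^{*}s)$.

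For part (2), since $S\to Y$ is finite, $\chi(\mathcal O_S)=\chi(f_{*}\mathcal O_S)=\chi(\mathcal O_Y)+\chi(\mathcal L^{-1})$. By Riemann--Roch on the smooth projective curve $Y$ one has $\chi(\mathcal L^{-1})=1-p_a(Y)-\deg\mathcal L$, while $\mathcal L^{\otimes 2}\simeq \mathcal O_Y(B)$ gives $\deg\mathcal L=\deg(B)/2$. Combining these with $\chi(\mathcal O_Y)=1-p_a(Y)$ and $p_a(S)=1-\chi(\mathcal O_S)$ yields the stated formula after routine rearrangement.

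For part (3), the first equality is again the splitting together with finite pushforward preserving Euler characteristics. For the second, I would apply Riemann--Roch on the smooth projective surface $Y$:
\[
\chi(\mathcal L^{-1})=\chi(\mathcal O_Y)+\tfrac{1}{2}\mathcal L^{-1}\cdot (\mathcal L^{-1}-K_Y).
\]
Writing $L$ for a divisor with $\mathcal O_Y(L)\simeq \mathcal L$, so that $B\sim 2L$ in $\mathrm{Pic}(Y)$, intersection numbers depend only on line bundle classes and give $L^{2}=B^{2}/4$ and $L\cdot K_Y=B\cdot K_Y/2$. Substituting produces the claimed expression $(B^{2}+2B\cdot K_Y)/8$.

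There is no genuine obstacle here: all three parts are formal consequences of the structure sheaf decomposition furnished by Construction~\ref{Ls} combined with Riemann--Roch in the appropriate dimension; the only mild subtlety is remembering that $B=\mathrm{div}(s)$ is the divisor of a section of $\mathcal L^{\otimes 2}$, so numerical invariants of $\mathcal L$ must be divided by two relative to those of $B$.
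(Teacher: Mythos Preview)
Your argument is correct and is exactly the standard one the paper has in mind; the corollary is stated with a bare \qed\ because it follows immediately from the splitting $f_*\mathcal O_S=\mathcal O_Y\oplus\mathcal L^{-1}$ of Construction~\ref{Ls} together with Riemann--Roch, just as you wrote. One tiny quibble: in part~(1) you invoke ``flat base change'', but formation of relative $\mathrm{Spec}$ of a quasi-coherent $\mathcal O_Y$-algebra commutes with \emph{arbitrary} base change, and the statement does not assume $f:Y'\to Y$ is flat---so simply drop the word ``flat''.
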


\begin{Prop}\label{Property}
Let $f: S\rightarrow Y$ be a flat double cover over $Y$ 
with branch divisor $B$. 
\begin{enumerate}
\item If $Y$ is normal, then $S$ is normal if and only if $B$ is reduced.  
\item If $Y$ is regular, then $S$ is regular if and only if $B$ is regular.
\item If $Y$ is smooth over $K$, then $S$ is smooth over $K$ if and only if $B$ is smooth over $K$.\end{enumerate}
\end{Prop}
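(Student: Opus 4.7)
The plan is to reduce everything to a pointwise calculation in a local chart. After trivialising $\cL$ on an affine neighbourhood of a given point $y\in Y$, the restriction of $f$ becomes $\Spec R[t]/(t^2-s)$, where $R=\cO_{Y,y}$ and $s\in R$ is a local equation of $B$. If $s$ is a unit at $y$, then $t^2=s$ makes $t$ a unit, and since $\mathrm{char}\,K\ne 2$ the derivative $2t$ is also a unit, so $f$ is \'etale there; normality, regularity, and $K$-smoothness thus transfer automatically from $Y$ to $S$ above such $y$. The entire problem reduces to analysing a point $x\in f^{-1}(y)$ with $y\in B$, where $s\in\m_y$ forces $t\in\m_x$, so that $x$ is the unique point above $y$, with $\m_x=(\m_y,t)$ and $k(x)=k(y)$.

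For (1), normality of $Y$ gives $S_2$ on $Y$, and local freeness of $f_\ast\cO_S$ propagates $S_2$ to $S$; by Serre's criterion it remains to verify $R_1$, i.e., regularity at height-one points. At a height-one $y\in B$, $R$ is a DVR with uniformiser $\pi$ and $s=u\pi^n$ for some unit $u$ and $n\geq 1$. When $n=1$ one has $(\pi,t)=(t)$ in $R[t]/(t^2-u\pi)$, so the ring is a DVR; when $n\geq 2$, the element $t/\pi^{\lfloor n/2\rfloor}$ satisfies $x^2\in\{u,u\pi\}\subset R$ and so is integral but not in $R[t]/(t^2-s)$, showing the ring fails to be normal. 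The condition $n=1$ at every height-one point of $B$ is exactly reducedness of $B$, giving (1).

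Parts (2) and (3) rest on a standard criterion applied to the hypersurface $V(t^2-s)\subset\Spec R[t]$. The ambient ring $R[t]$ is regular (respectively, smooth over $K$) of dimension $\dim R+1$, and the maximal ideal $(\m_y,t)$ admits a regular system of parameters obtained by adjoining $t$ to one for $\m_y$. Hence $\cO_{S,x}$ is regular iff $t^2-s\notin(\m_y,t)^2=(\m_y^2,t\m_y,t^2)$, iff $s\notin\m_y^2$; this is precisely regularity of $B=V(s)$ at $y$, and gives (2). For (3), the Jacobian criterion says that $V(t^2-s)$ is smooth over $K$ at $x$ iff $d(t^2-s)=2t\,dt-ds$ is nonzero in $\Omega_{R[t]/K}\otimes k(x)$; since $t=0$ at $x$, this reduces to $ds\ne 0$ in $\Omega_{R/K}\otimes k(y)$, which via the conormal sequence of $B\hookrightarrow Y$ is the smoothness criterion for $B$ at $y$.

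The only subtle point, ultimately routine, is the Jacobian criterion in (3) when the residue extension $k(y)/K$ may be inseparable: one must confirm that smoothness of a hypersurface $V(g)$ in a $K$-smooth ambient is genuinely detected by non-vanishing of $dg$ in the relevant module of K\"ahler differentials, and that the conormal identification for $B\hookrightarrow Y$ remains valid. Both are standard once the conormal sequences are tracked carefully, so no real obstacle arises.
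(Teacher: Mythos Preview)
Your argument is correct. The paper does not supply its own proof of this proposition; it simply refers the reader to \cite{C-D}, Chapter~0. What you have written is essentially the standard local computation that such a reference would contain: trivialise $\cL$ so that $S$ is locally $\Spec R[t]/(t^2-s)$, handle the \'etale locus $s\in R^\times$ trivially, and at points of $B$ analyse the hypersurface $t^2-s$ directly via Serre's criterion for (1), the regular-parameter test for (2), and the Jacobian criterion for (3). The verification that $S_2$ passes from $Y$ to $S$ (because $\cO_{S,x}$ is free over $\cO_{Y,y}$, so depth is preserved) and the reduction of the Jacobian condition $d(t^2-s)\ne 0$ to $ds\ne 0$ via $t\mapsto 0$ in $k(x)$ are both clean and correct. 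Your closing remark about inseparable residue fields is appropriate caution, but as you note, the conormal/Jacobian formalism for a hypersurface in a $K$-smooth ambient holds without any separability hypothesis on $k(y)/K$, so nothing breaks.
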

\begin{proof}
See \cite{C-D}, chapter $0$. 
\end{proof}

Now suppose $Y$ is regular. Let $f: S\to Y$ be a flat 
double cover given by $\cL$ and $s\ne 0$ as in \ref{Ls} with 
$B=\mathrm{div}(s)$ being the branch divisor of $f$. 
Then $B$ can be uniquely written as sum of effective
divisors: $B=B_{1}+2B_{0}$ such that $B_{1}$ is reduced.

\begin{Prop}\label{normalisation} The normalisation of 
$S$ is the flat double cover  $S'\to Y$ given by 
$\cL':=\cL\otimes \cO_Y(-B_{0})$ and 
$s\in H^{0}(Y, {\cL'}^{\otimes 2})$ (here we use $\cL^{\otimes 2}=s\cO_Y(B)\supseteq s\cO_Y(B_1)=\cL'^{\otimes 2}$, and $s$ is in fact also an global section in $\cL'^{\otimes 2}$). 
Moreover, $B_{1}$ is the branch divisor of $S'\to Y$.
\end{Prop}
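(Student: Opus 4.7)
The plan is to construct a canonical finite $Y$-morphism $S'\to S$, observe that $S'$ is normal, and check that this morphism is an isomorphism over $Y\setminus\mathrm{Supp}(B_0)$; these three facts together will identify $S'$ with the normalisation of $S$, and the claim on the branch divisor drops out of the construction. The only point that needs care is the algebra-compatibility when building the map $S'\to S$.

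I first make sense of $S'$. Tensoring the inclusion of ideal sheaves $\cO_Y(-2B_0)\hookrightarrow \cO_Y$ with the invertible sheaf $\cL^{\otimes 2}$ realises $\cL'^{\otimes 2}=\cL^{\otimes 2}(-2B_0)$ as the subsheaf of $\cL^{\otimes 2}$ consisting of sections vanishing along $2B_0$. Since $\mathrm{div}(s)=B_1+2B_0\ge 2B_0$, the section $s$ lies in this subsheaf, and its divisor as a section of $\cL'^{\otimes 2}$ equals $B-2B_0=B_1$. Thus the flat double cover $S'\to Y$ associated to $(\cL',s)$ has branch divisor $B_1$, which is reduced by hypothesis. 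Since $Y$ is regular and in particular normal, Proposition~\ref{Property}(1) gives that $S'$ is normal.

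Next I build the $Y$-morphism $S'\to S$. The canonical section of $\cO_Y(B_0)$ gives an inclusion $\cO_Y\hookrightarrow \cO_Y(B_0)$; tensoring with $\cL^{-1}$ produces an inclusion $\cL^{-1}\hookrightarrow\cL'^{-1}$, and hence an $\cO_Y$-linear inclusion $\cO_Y\oplus\cL^{-1}\hookrightarrow \cO_Y\oplus\cL'^{-1}$. This inclusion is in fact a homomorphism of $\cO_Y$-algebras, because both multiplications are defined by contracting $\cL^{-1}\otimes\cL^{-1}$, resp.\ $\cL'^{-1}\otimes\cL'^{-1}$, against the \emph{same} element $s$, viewed through the embedding $\cL'^{\otimes 2}\hookrightarrow\cL^{\otimes 2}$ from the previous step; the required compatibility square commutes tautologically. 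Taking $\Spec$ produces the desired finite $Y$-morphism $S'\to S$. Over the open subset $U:=Y\setminus\mathrm{Supp}(B_0)$ the inclusion $\cL'|_U\hookrightarrow\cL|_U$ is an equality, so the algebra map restricts to the identity and $S'\times_Y U\to S\times_Y U$ is an isomorphism. As $U$ is dense in $Y$ and both covers are finite flat over $Y$, the morphism $S'\to S$ is a finite birational morphism with $S'$ normal, so $S'$ is the normalisation of $S$.
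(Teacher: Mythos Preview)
Your proof is correct. The paper itself does not supply a proof of this proposition; it is stated as a standard fact about flat double covers (in the spirit of the reference to Cossec--Dolgachev at the start of \S\ref{flat double cover}), so there is nothing to compare against beyond noting that your argument fills in exactly the details one would expect: interpret $s$ as a section of $\cL'^{\otimes 2}$ with divisor $B_1$, use Proposition~\ref{Property}(1) to see that $S'$ is normal, and produce the finite birational $Y$-morphism $S'\to S$ from the $\cO_Y$-algebra inclusion $\cO_Y\oplus\cL^{-1}\hookrightarrow\cO_Y\oplus\cL'^{-1}$. The only subtle step is the algebra compatibility, and your observation that both multiplications are evaluation at the \emph{same} $s$ through the inclusion $\cL'^{\otimes 2}\hookrightarrow\cL^{\otimes 2}$ is precisely what makes the square commute; locally this is just the map $A[y]/(y^2-gh^2)\to A[z]/(z^2-g)$, $y\mapsto hz$.
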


As an application of this proposition, we recall the following process 
of resolution of singularities from a flat double cover. One may also confer \cite{BPV}, III $\S~6$. 

\begin{Def}[Canonical resolution]\label{canonical}
Let $k$ be an algebraically closed field of characteristic different from $2$, and for our purpose, let $Y_0$ be either a nonsingular algebraic surface over $k$, or the spectrum of a local ring of a nonsingular algebraic surface over $k$. Let $f_0:S_{0}\rightarrow Y_0$ be a flat double cover given by data 
 $\{\mathcal{L}_0, 0\neq s\in H^{0}(Y_0,\mathcal{L}_0^{\otimes 2})\}$ and assume that the branch locus $B:=\divi(s)$ is 
reduced (\ie \ $S_{0}$ is normal by Proposition~\ref{Property}). 
Then the canonical resolution of singularities of $S_{0}$ is the following process:
 
If $B_{0}$ is not regular, choose a singular point 
$y_{0}\in B_{0}$, let $m_{0}:=\mathrm{mult}_{y_{0}}B_{0}$, and $l_0:=\lfloor m_0/2 \rfloor$. Blowing up $y_0$ we obtain a morphism $\sigma_0:Y_{1}\rightarrow Y_{0}$. 
Then $S_{0}\times_{Y_{0}}Y_{1}\to Y_1$ is a flat double cover with associated invertible sheaf $\cL'=\sigma^*\cL_0$
 and branch divisor $\sigma_0^*B=\widetilde{B}_0+m_0E$, where 
$\widetilde{B}_0$ is the strict transform of $B_0$ in $Y_1$ and $E$ 
is the exceptional divisor.
Let $S_{1}$ be the normalisation of $S_{0}\times_{Y_{0}}Y_{1}$. 
Then by Proposition \ref{normalisation}, 
$f_1 : S_{1}\to Y_1$ is a flat double cover with associated invertible sheaf $\cL_1=\sigma^*\cL_0\otimes\cO_{Y_1}(-l_0E)$ and branch divisor $B_{1}=\sigma^{*}(B_0)-2l_0E$. Replace our data $\{f_0, \cL_0, B_0\}$ by $\{f_1,\cL_1, B_1\}$ and run the above process again until we reach some $n$ such that $B_n$ is regular, \ie \ $S_n\to S_0$ is a resolution of singularities by Proposition~\ref{Property}. To see why this process stops in finitely many times, one may confer \cite{BPV} Chap. 3.7. We draw the following diagram as a picture of this process.\qed
\end{Def}
$$\xymatrix{
S_0 \ar[d]_{f_0}& S_1\ar[d]_{f_1}\ar[l]^{g_0}&...\ar[l]^{g_1}&S_n\ar[d]^{f_n}\ar[l]^{g_{n-1}}\ar@/_1pc/[lll]_{g}\\
Y_0 & Y_1\ar[l]^{\sigma_0}&...\ar[l]^{\sigma_1}&Y_n\ar[l]^{\sigma_{n-1}}}
$$

We will denote by $y_{i}\in B_{i}$ the center of the blowing-up morphism 
$\sigma_{i}:Y_{i+1}\to Y_i$, $E_{i}$  the exceptional locus, $m_{i}:=\mathrm{mult}_{y_{i}}B_{i}$, and 
$l_{i}:=\lfloor m_{i}/2\rfloor$. Then it follows that
\begin{align}\label{equation on R^1}
\chi(R^1{g_i}_*\cO_{S_{i+1}})&=(l_i^2-l_i)/2. \\
\omega_{S_{i+1}/Y_{i+1}}&=g_i^*\omega_{S_i/Y_i}\otimes f_i^*\cO_{Y_{i+1}}(-l_iE_i).
\end{align}
In particular, if $Y$ is proper, then \begin{equation}
\chi(\cO_{S_n})-\chi(\cO_{S_0})=-\sum\limits_{0\le i< n} (l_i^2-l_i)/2.
\end{equation}
\begin{equation}\label{variation of K^2}
K_{S_{i+1}}^2=K_{S_i}^2-2(l_i-1)^2.
\end{equation} 
\begin{Def}\label{num}
Given a flat double cover $f_0: S_0\to Y_0$ as above, and  assume $g:S_n\to S_0$ is the canonical resolution defined as above. Let $y$ be a closed point of the branch divisor $B$, then there is a unique $s\in S_0$ lying above $y$, we define $\xi_y:=\dim_k R^1g_*(\cO_{S_n})_s$. It is well known that if $g': \widetilde{S}\rightarrow S_0$ is another resolution of singularities, then $\xi_{y}=\mathrm{dim}_k (R^{1}g'_{*}\mathcal{O}_{\widetilde{S}})_{s}.$
\end{Def}

Keep the notations we introduced for the canonical resolution, then by formula (\ref{equation on R^1}) we can compute $\xi_y$:
 \begin{equation}\label{equation of xi 1}
\xi_y:=\sum_{i\le n-1} \delta_{i}(y)(l_{i}^2-l_{i})/2,
\end{equation} 
where
$$
\delta_i(y)=\left\lbrace
\begin{matrix}
1, & \text{if $y_i$ is mapped to $y$ by $Y_i\to Y$,} \\ 
0, & \text{otherwise.} \hfill \\ 
\end{matrix}
\right.
$$ 

By definition, in case $Y$ is  projective, we have :
\begin{equation}\label{difference}
\chi(\cO_{S_0})-\chi(\mathcal{O}_{S_{n}})=\chi(R^1g_*\cO_{S_n})=\sum\limits_{y\in B}\xi_{y}.
\end{equation} 

\begin{Def}\label{neglect}
\begin{enumerate}
\item A point $y\in B$ as above is called a negligible singularity of the first kind, if $B$ is locally the union of two nonsingular divisors.

\item  A point $y\in B$ as above is called a negligible singularity of the second kind, if $B$ is locally the union of three nonsingular divisors such that at least two of them meet properly at $y$.
\end{enumerate}

It is evident from (\ref{equation of xi 1}) both kinds of negligible singularities has $\xi_y=0$. So we are allowed to neglect them in the computation of $\chi(\cO_{X_n})$.
\end{Def}

Finally we have another application of Proposition~\ref{normalisation}.
\begin{Def}
In this paper we call a projective curve $E$ over a field $K$ is \emph{hyperelliptic} (resp. \emph{quasi-hyperelliptic}) 
 if  it is geometrically integral and admits 
a flat double cover over 
$\mathbb{P}_{K}^{1}$ (resp. a smooth plane conic).  
\end{Def}
\begin{Prop}\label{on Pa}
Let $E$ be a normal projective geometrically rational  curve (see Definition~\ref{g-r}) over a field $K$ of characteristic $p \neq 2$. 
If $E$ is quasi-hyperelliptic,  then $p_{a}(E)=(p^{i}+p^{j}-2)/2$ for some non-negative integer $i,j$.
\end{Prop}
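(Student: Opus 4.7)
My plan is to base-change to $\bar K$ and use that the geometric normalisation of $E$ is $\mathbb P^1_{\bar K}$ to force the branch divisor of the given cover to have a very restricted shape.

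First I would set up the cover: let $f\colon E\to C$ be the given flat double cover with branch divisor $B=\divi(s)$. Since $E$ is normal, Proposition~\ref{Property}(1) gives that $B$ is reduced, and since the conic $C$ has $p_a(C)=0$, Corollary~\ref{genus}(2) yields $p_a(E)=\deg(B)/2-1$. After base change to $\bar K$ one obtains a flat double cover $\bar f\colon \bar E\to\bar C\cong \mathbb P^1_{\bar K}$, and $\bar E$ is integral with normalisation $\mathbb P^1_{\bar K}$ because $E$ is geometrically rational. Decomposing $\bar B=\bar B_1+2\bar B_0$ with $\bar B_1$ reduced, Proposition~\ref{normalisation} identifies the normalisation of $\bar E$ with the flat double cover of $\bar C$ having reduced branch divisor $\bar B_1$; since this normalisation is the smooth curve $\mathbb P^1_{\bar K}$, Proposition~\ref{Property}(3) forces $\bar B_1$ to be smooth, and then Corollary~\ref{genus}(2) applied to $\mathbb P^1\to\mathbb P^1$ gives $\deg\bar B_1=2$.

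Next I would express $\bar B$ in terms of the closed points of $B$. For a closed point $P\in B$ with separable degree $n_P$ and inseparable exponent $p^{r_P}$ in $K(P)/K$, we have $K(P)\otimes_K\bar K\cong \prod_{i=1}^{n_P}\bar K[t]/(t^{p^{r_P}})$, so $P$ contributes $\sum_{i=1}^{n_P}p^{r_P}Q_{P,i}$ to $\bar B$, with the $Q_{P,i}$ distinct geometric points. Because $p\neq 2$, each multiplicity $p^{r_P}$ is odd, so $\bar B_1=\sum_{P,i}Q_{P,i}$ and the equation $\deg\bar B_1=2$ reduces to $\sum_P n_P=2$. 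This leaves exactly two possibilities: either $B$ consists of a single closed point $P$ of separable degree $2$ with $[K(P):K]=2p^r$, so that $\deg B=2p^r$ and $p_a(E)=(p^r+p^r-2)/2$; or $B$ consists of two purely inseparable closed points with $[K(P_\ell):K]=p^{r_\ell}$ for $\ell=1,2$, so that $\deg B=p^{r_1}+p^{r_2}$ and $p_a(E)=(p^{r_1}+p^{r_2}-2)/2$. In both cases the desired formula $p_a(E)=(p^i+p^j-2)/2$ holds with non-negative $i,j$.

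The delicate step will be the passage from $B$ to $\bar B$: I need to track precisely how a reduced closed point whose residue field has inseparable exponent $p^r$ becomes, after base change to $\bar K$, a divisor of multiplicity exactly $p^r$ at each geometric point above it, and to observe that the oddness of $p$ is exactly what causes the full inseparable exponent to land in $\bar B_1$ rather than being absorbed into $2\bar B_0$. All the remaining steps are a direct application of the flat double-cover formalism of Section~\ref{flat double cover}.
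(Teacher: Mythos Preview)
Your proposal is correct and follows essentially the same route as the paper: base-change, apply Proposition~\ref{normalisation} to identify the normalisation of $E_{\bar K}$ as a double cover of $\mathbb P^1_{\bar K}$ whose reduced branch locus has degree~$2$ by Corollary~\ref{genus}(2), and read off $\deg B$ from the structure of the geometric fibre of $B$.

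The one noteworthy difference is a matter of bookkeeping. The paper first replaces $K$ by its separable closure; then every closed point of $B$ has purely inseparable residue field, so $B=b_1+b_2$ with $[K(b_i):K]=p^{r_i}$ and the formula falls out immediately. You instead stay over the original $K$ and track the separable degree $n_P$ and inseparable exponent $p^{r_P}$ of each point, obtaining $\sum_P n_P=2$ and a dichotomy (one point with $n_P=2$, or two purely inseparable points). Both arguments use $p\neq 2$ in the same place, namely to ensure that the multiplicity $p^{r_P}$ of each geometric point of $\bar B$ is odd and hence survives into $\bar B_1$. The paper's reduction to $K^{\mathrm{sep}}$ buys a cleaner endgame with no case split; your version buys a slightly finer statement (you actually classify the possible shapes of $B$ over the original $K$). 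One small remark on your ``delicate step'': the isomorphism $K(P)\otimes_K\bar K\cong\prod_i\bar K[t]/(t^{p^{r_P}})$ is indeed valid here because $K(P)$, being the residue field of a closed point on a smooth curve, is a \emph{simple} extension of $K$; in any case all you use is the length $p^{r_P}$ at each geometric point, which follows from $[K(P):K]=n_P\,p^{r_P}$ and Galois transitivity.
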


\begin{proof} We can extend $K$ to its separable closure and suppose that $K$ is separably closed. 
We have a flat double cover $E\to \mathbb P_K^1$ with
  reduced branch divisor $B$ ($E$ is normal). Write $B=b_{1}+...+b_{n}$. 
Let $d_{i}:=[k(b_{i}):K]$,  
this is a power of $p$. The flat double cover 
$E_{\bar{K}}\to \mathbb P^1_{\bar{K}}$ has its branch divisor $B_{\bar{K}}$ supported in $n$
points, with multiplicities powers of $p$.  By Proposition~\ref{normalisation}, the normalisation 
of $E_{\bar{K}}$ is a flat double cover of $\mathbb P^1_{\bar{K}}$ branched at $n$ points. This normalisation being a smooth rational curve, we find $n=2$ by Corollary \ref{genus}(2). So $\mathrm{deg}(B)=d_{1}+d_2$ is of the form $p^{i}+p^{j}$ and $p_{a}(E)=\deg(B)/2-1$ is of form $(p^{i}+p^{j}-2)/2$ by Corollary~\ref{genus}(2).
\end{proof}


\subsection{On a Bertini type theorem}\label{Bertini}
\setlength{\parindent}{2em}Let $S$ be a proper scheme over a field $k$, and let $\mathcal{L}=\cO_S(D)$ be an invertible sheaf on $S$. By $|D|$ we denote the set of the effective divisors linearly equivalent to $D$. 
Let $H^0(S, \cO_S(D))^{\vee}$ be the dual of the $k$-vector space $H^0(S, \cO_S(D))$. 
We have a bijection 
$$(H^0(S, \cO_S(D))\setminus \{0\})/k^*=\mathbb P(H^0(S, \cO_S(D))^{\vee})(k)\to |D|$$ 
which maps $s\in H^0(S, \cO_S(D))\setminus \{0\}$ to $D+\mathrm{div}(s)$. 

A sub-linear system $V$ of $|D|$ is, by definition, the set of $D+\mathrm{div}(s)$, $s\in \widetilde{V}\setminus \{0\}$, 
where $\widetilde{V}$ is a linear subspace of $H^0(S, \cO_S(D))$, we call this linear system the associated linear system of $V$. The above
bijection establishes a bijection between $V$ and the rational 
points $\mathbb P(\widetilde{V}^{\vee})(k)$. 

Let $f: X\rightarrow C$ be a flat fibration between proper
integral varieties over an infinite field $k$.  Let
$K$ be the function field of $C$, and let $X_{\eta}/K$ denote the
generic fibre of $f$. Let $\mathcal{L}=\cO_X(D)$ be an invertible
sheaf on $X$, and let $V\subseteq |D|$ be a sub-linear system. 
Denote by $D_\eta$ the restriction of $D$ to $X_\eta$ and by $V_K$ 
the sub-linear system of $|D_{\eta}|$ 
generated by the effective divisors $D'_\eta$, $D'\in |D|$. The 
vector space $\widetilde{V}_K$ associated to $V_K$ 
is exactly $K(i(\widetilde{V}))\subseteq H^0(X_\eta, \cO_{X_{\eta}}(D_\eta))$, where 
$i: H^0(X, \cO_X(D))\hookrightarrow H^0(X_\eta, \cO_{X_\eta}(D_\eta))$ 
is the canonical restriction map. 

\begin{Lem}\label{Property P}
Consider the map 
$$r: V=\mathbb P(\widetilde{V}^{\vee})(k)\to V_K=
\mathbb P((\widetilde{V}_K)^{\vee})(K)$$ 
defined by $D'\mapsto D'_\eta$. Then 
$r$ is continuous for the Zariski topology. Moreover, 
for any Zariski non-empty open subset $U$ of $V_K$, $r^{-1}(U)$
is a non-empty Zariski open subset of $V$. 
\end{Lem}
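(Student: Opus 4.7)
First, I would pick a $k$-basis $e_0,\ldots,e_n$ of $\widetilde V$, identifying $V$ with $\mathbb P^n(k)$. Since $\widetilde V_K=K\cdot i(\widetilde V)$, I can reorder so that $i(e_0),\ldots,i(e_m)$ is a $K$-basis of $\widetilde V_K$ and write $i(e_j)=\sum_{i\le m}c_{ij}\,i(e_i)$ for $j>m$ with $c_{ij}\in K$. In these homogeneous coordinates $r$ becomes the restriction to $k$-points of the $K$-linear projection
$$\pi:\mathbb P^n_K\dashrightarrow\mathbb P^m_K,\qquad [a_0:\cdots:a_n]\longmapsto\Bigl[a_0+\sum_{j>m}c_{0j}a_j\,:\,\cdots\,:\,a_m+\sum_{j>m}c_{mj}a_j\Bigr],$$
which is regular away from a $K$-linear subscheme $B\subsetneq\mathbb P^n_K$. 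The well-definedness of $r$ on all of $V$ is exactly the condition $B(k)=\varnothing$, equivalently that $i:\widetilde V\to\widetilde V_K$ is injective; I shall assume this throughout.

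For continuity, any Zariski closed subset of $V_K$ has the form $Z(K)$ for some closed subscheme $Z\subseteq\mathbb P^m_K$, and setting $Y:=\overline{\pi^{-1}(Z)}\subseteq\mathbb P^n_K$ one has $r^{-1}(Z(K))=Y(K)\cap\mathbb P^n(k)$, using $B(k)=\varnothing$ to absorb the indeterminacy of $\pi$. It therefore suffices to show that for any closed $Y\subseteq\mathbb P^n_K$, the subset $Y(K)\cap\mathbb P^n(k)$ is Zariski closed in $\mathbb P^n(k)=\mathbb P^n_k(k)$. To this end I would fix a $k$-basis $\{\varepsilon_\lambda\}$ of $K$ and expand each defining homogeneous polynomial $f\in K[x_0,\ldots,x_n]$ of $Y$ uniquely as $f=\sum_\lambda f_\lambda\varepsilon_\lambda$ with $f_\lambda\in k[x_0,\ldots,x_n]$. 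By $k$-linear independence of the $\varepsilon_\lambda$, for $a\in k^{n+1}$ the equation $f(a)=0$ is equivalent to $f_\lambda(a)=0$ for every $\lambda$. Hence $Y(K)\cap\mathbb P^n(k)$ is cut out in $\mathbb P^n_k$ by the collection of $k$-polynomials $\{f_\lambda\}$, and is therefore Zariski closed.

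For the second assertion, openness of $r^{-1}(U)$ is immediate from continuity, and the non-emptiness reduces, via the same pullback, to finding a $k$-point of $\mathbb P^n$ lying outside any given proper closed $K$-subscheme $Y\subsetneq\mathbb P^n_K$. In other words, the only genuine input needed is the Zariski density of $\mathbb P^n(k)$ in $\mathbb P^n_K$. I would prove this directly: a non-zero $f\in K[x_0,\ldots,x_n]$ vanishing on all of $k^{n+1}$, decomposed as $f=\sum_\lambda f_\lambda\varepsilon_\lambda$, forces every $f_\lambda\in k[x_0,\ldots,x_n]$ to vanish on $k^{n+1}$ by the same $k$-linear independence, whence each $f_\lambda=0$ because $k$ is infinite by hypothesis, and therefore $f=0$, a contradiction. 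This density is the main (though mild) obstacle; everything else is bookkeeping about how $K$-polynomial conditions on $k$-points reduce to $k$-polynomial conditions via a chosen $k$-basis of $K$.
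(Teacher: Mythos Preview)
Your proof is correct and follows essentially the same strategy as the paper's: factor $r$ as the composition of the inclusion $\mathbb{P}^n(k)\hookrightarrow\mathbb{P}^n_K$ with a linear projection $\mathbb{P}^n_K\dashrightarrow\mathbb{P}^m_K$, then use continuity of each factor and Zariski density of $k$-points for the non-emptiness. The paper phrases the projection abstractly via the dual of the surjection $\widetilde V\otimes_k K\twoheadrightarrow\widetilde V_K$ and simply asserts the continuity and density facts as ``well known,'' whereas you choose explicit coordinates and spell out both facts by decomposing $K$-polynomials along a $k$-basis of $K$; but the underlying argument is the same. One minor remark: your phrase ``I shall assume this throughout'' about the injectivity of $i$ is unnecessary---in the paper's setup $i$ is the restriction map along a flat fibration with integral total space, hence automatically injective.
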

\begin{proof}
Let $(\widetilde{V}_K)^{\vee}\hookrightarrow
\widetilde{V}^{\vee}\otimes_k K$ be the dual map of the surjective map
$\widetilde{V}\otimes_k K \to \widetilde{V}_K$. It 
induces a dominant rational map
$\mathbb{P}(\widetilde{V}^{\vee}\otimes_k K)\dashrightarrow
\mathbb{P}((\widetilde{V}_K)^{\vee})$. Let $\Omega$ be the domain of 
definition of this rational map. Then we see easily that the canonical 
map $\mathbb{P}(\widetilde{V}^{\vee})(k)\to \mathbb{P}(\widetilde{V}^{\vee})(K)$
is continuous for the Zariski topology, has image in $\Omega$ and the composition 
$\mathbb{P}(\widetilde{V}^{\vee})(k)\to
\mathbb{P}(\widetilde{V_K}^{\vee})(K)$ is equal to $r$. 

So $r$ is continuous for the Zariski topology. In particular $r^{-1}(U)$
is open. As $k$ and $K$ are infinite, it is well known that 
$\mathbb{P}(\widetilde{V}^{\vee})(k)\hookrightarrow
\mathbb{P}(\widetilde{V}^{\vee}\otimes_k K)(K)
=\mathbb{P}(\widetilde{V}^{\vee})(K)$ has 
dense image, and the latter is dense in 
$\mathbb{P}(\widetilde{V}^{\vee}\otimes_k K)$. So $r^{-1}(U)$ is non-empty. 
\end{proof}

We say that a general member of $V$ has a
certain property (P) if there is a non-empty (Zariski) open subset
of $\mathbb P(\widetilde{V})(k)$ such that each
member in this subset satisfies the property (P). This lemma then shows that if a general member of $V_K$ has property (P), so does $D'_\eta:=D‘|_\eta$, where $D'\in V$ is a general member.

\begin{Cor}\label{separable  of V_K}
Assume $f: X\to C$ is a fibration from a smooth  proper surface to a smooth curve over an algebraically closed field, if the generic fibre  $X_\eta/K(C)$ is geometrically integral and $V$ is a fix part free linear system on $X$, let $D\in V$ be a general member, then its horizontal part $D_h$ is reduced and separable over $C$ if the morphism $\phi: X_\eta\rightarrow \mathbb{P}(\widetilde{V}_K)$ defined by $V_K$ is separable.
\end{Cor}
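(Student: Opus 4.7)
The plan is to combine Lemma~\ref{Property P} with a Bertini-type theorem applied on the generic fibre. By Lemma~\ref{Property P}, any Zariski open property that holds for a general member of $V_K$ also holds for $D_\eta:=D|_{X_\eta}$ when $D$ ranges over a non-empty open subset of $V$. Hence it suffices to show, under the assumption that $\phi$ is separable, that a general member $E\in V_K$ is reduced as a divisor on $X_\eta$ and supported at closed points whose residue fields are separable over $K:=K(C)$. The translation back to $X$ is then purely formal: a closed point $P\in X_\eta$ corresponds bijectively, via Zariski closure in $X$, to a prime horizontal divisor $\overline{P}\subset X$, with $\kappa(P)=K(\overline{P})$. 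Reducedness of $D_\eta$ means each horizontal component of $D$ has multiplicity one, so $D_h$ is reduced, while separability of $\kappa(P)/K$ says $\overline{P}\to C$ is separable, so $D_h$ is separable over $C$.

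For the Bertini step, let $Y:=\phi(X_\eta)$ with the reduced induced structure in $\mathbb{P}(\widetilde{V}_K)$; one may assume $\dim Y=1$, the case of constant $\phi$ being trivial. Since $\phi$ is separable, $K(X_\eta)/K(Y)$ is a finite separable extension, so there is a dense open $U\subseteq X_\eta$ on which $\phi$ is étale onto a dense open of the smooth locus $Y^{\mathrm{sm}}$. Applying the classical Bertini theorem to the curve $Y$ in projective space over the infinite field $K$, a general $K$-rational hyperplane $H\subset\mathbb{P}(\widetilde{V}_K)$ meets $Y$ in finitely many points of $\phi(U)\cap Y^{\mathrm{sm}}$ whose residue fields are separable over $K$. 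Pulling back by the étale morphism $\phi|_{U}$ then gives $E=\phi^{-1}(H)$ reduced with residue fields separable over $K$, as required.

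The main obstacle is the Bertini step itself, especially ensuring separability of the residue fields of a general hyperplane section of $Y$ over the non-algebraically-closed field $K$. In characteristic zero this is automatic, but in characteristic $p$ the separability of $\phi$ is precisely what produces the generically étale open $U$; without it, as one sees for purely inseparable $\phi$, a general hyperplane section can easily fail to be reduced. Once $U$ is in place, however, the argument is the standard Bertini-Jouanolou statement for separable morphisms combined with étale pullback, and the rest of the corollary is the formal translation via Lemma~\ref{Property P} sketched in the first paragraph.
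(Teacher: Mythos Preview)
Your proof is correct and follows essentially the same route as the paper's: both reduce via Lemma~\ref{Property P} to showing that a general member of $V_K$ is \'etale over $K$ (equivalently, reduced with separable residue fields), and both obtain this by combining the separability of $\phi$ with a Bertini argument on the image curve $\phi(X_\eta)\subset\mathbb{P}(\widetilde{V}_K)$. The paper is simply terser---it observes that $\phi(X_\eta)$ is geometrically integral (hence has finitely many non-smooth points) and that $\phi$ is \'etale off a finite set, then declares the conclusion ``evident''---whereas you spell out the \'etale open $U$, the factorisation through $Y^{\mathrm{sm}}$, and the \'etale pullback explicitly.
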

\begin{proof}
Note that $D_h$ is reduced and separable over $C$ if and only if $D_\eta$ is \'etale over $K$. By Lemma~\ref{Property P}, it then suffices to prove that a general member of $V_K$ is \'etale over $K$. As $V$ is free of fix part, so is $V_K$. Therefore a general member of $V_K$ equals to $$\phi^*({\text{a general hyperplane in} \ \mathbb{P}(\widetilde{V}_K)}).$$  Now since $\phi(X_\eta)$ is geometrically integral (hence only have finitely many non-smooth points over $K$) and $\phi$ is separable (hence \'etale outside finitely many points), a general member of $V_K$ will evidently be \'etale over $K$. \end{proof}
\begin{Rem} Let $V,D$ be as above, \begin{enumerate}
\item if $p\nmid D\cdot F$ ($F$ is a fibre of $X/C$), then $\phi$ is automatically separable. 
\item if $V$ is not composed with pencils, then $D$ is furthermore irreducible by \cite{J} Theorem 6.11.
\end{enumerate}
\end{Rem}

\subsection{Some other supplementaries}\subsubsection{}\label{subsection: ramification}
Let $k$ be an algebraically closed field of characteristic $p$, and $\phi:D\rightarrow C$ be  \emph{a separable morphism} between two smooth curves over $k$. Assume  $d\in D$ is a closed point and $c:=\phi(d)$. Choose an arbitrary uniformizer $s\in \cO_{c,C}$ of $c$.
\begin{Def}\label{notation}
We define the \emph{ramification index} of $\phi$ at $d$ to be the number $R_d(\phi):=\dim_k (\Omega_{D/C})_d$. 

And we define the \emph{type of ramification} at $d$ to be a set $\Lambda_{d}(\phi)$ of numbers as below.

\begin{enumerate}

\item If $\phi$ is wildly ramified at $d$,  $\Lambda_d(\phi):=\{v(s), R_d(\phi)\}$, where $v$ is the normalised valuation at $d$. Note here that $v(s)$ is independent on the choice of $s$ and $p\mid v(s)$ by assumption, we also define $j_d(\phi):= v(s)/p$.

\item If $\phi$ is tamely ramified at $d$,  $\Lambda_d(\phi):=\{R_d(\phi)\}$. Note that in this case $p\nmid v(s)=R_d(\phi)+1$.\qed
\end{enumerate}
\end{Def}

When no confusion can occur, we shall use $R_d$ and $\Lambda_d$ instead of $R_d(\phi)$ and $\Lambda_d(\phi)$.
\begin{Rem}
By abuse of language we can also talk about the ramification index and ramification type of a certain kind of  function as below. Suppose $s\in \cO_{d,D}\backslash \cO_{d,D}^p$ is an element in the maximal ideal of $\cO_{d,D}$, then we can define a separable local morphism (still denoted by $s$) $s: \Spec(\cO_{d,D})\to \Spec(k[x])_{(x)}$ mapping $x\mapsto s$. By mixing the function $s$ and the associated morphism $s$ we are allowed to talk about its ramification index $R_d(s)$ and ramification type $\Lambda_d(s)$.       
\end{Rem}
From our definition of ramification index, we have Hurwitz's formula:
\begin{Prop}[see, \cite{LQ} Theorem 4.16 and Remark 4.17]
Suppose $\phi:D\rightarrow E$ is a separable morphism between smooth projective curves. Then 
\begin{equation}\label{Hurwitz}
2\deg\phi(g(E)-1)+\sum\limits_{d}R_d(\phi)=2g(D)-2.
\end{equation} \qed
\end{Prop}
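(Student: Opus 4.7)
The plan is to derive the formula from the fundamental exact sequence of K\"ahler differentials associated to the tower $D\to E\to \Spec k$, namely
$$\phi^*\Omega_{E/k}\to \Omega_{D/k}\to \Omega_{D/E}\to 0.$$
Both $\Omega_{E/k}$ and $\Omega_{D/k}$ are invertible since $D$ and $E$ are smooth curves over $k$. Because $\phi$ is separable, it is generically \'etale, so $\Omega_{D/E}$ is a torsion sheaf on $D$ supported at the ramification locus; consequently the left map must be injective (its kernel would be a torsion subsheaf of an invertible sheaf on a smooth curve, hence zero). This yields the short exact sequence
$$0\to \phi^*\Omega_{E/k}\to \Omega_{D/k}\to \Omega_{D/E}\to 0.$$

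Taking degrees and using $\deg \omega_{C/k}=2g(C)-2$ for a smooth projective curve $C/k$ (Riemann--Roch), one obtains
$$2g(D)-2=\deg(\phi^*\omega_{E/k})+\deg\Omega_{D/E}=\deg\phi\cdot(2g(E)-2)+\deg\Omega_{D/E}.$$
It remains to identify $\deg\Omega_{D/E}$ with $\sum_d R_d(\phi)$. Since $k$ is algebraically closed, each closed point $d$ has residue field $k$, so
$$\deg\Omega_{D/E}=\sum_{d\in D}\Len_{\cO_{D,d}}(\Omega_{D/E})_d=\sum_{d\in D}\dim_k(\Omega_{D/E})_d=\sum_{d\in D}R_d(\phi),$$
by the very definition of $R_d(\phi)$ in Definition~\ref{notation}. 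Substituting gives the stated equality.

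The only subtle step is the injectivity of $\phi^*\Omega_{E/k}\hookrightarrow \Omega_{D/k}$, which relies crucially on separability; without it the map could have a torsion kernel (indeed, for an inseparable degree $p$ map this is exactly the situation analysed in Lemma~\ref{exact1}). Everything else is a routine local/global degree computation, so there is no real obstacle here; the reference to \cite{LQ} Theorem 4.16 handles any remaining technicalities about the local structure of $\Omega_{D/E}$ at wildly ramified points, where the length can exceed the naive ramification index minus one.
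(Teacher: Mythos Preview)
Your argument is correct and is exactly the standard proof of Hurwitz's formula via the short exact sequence of differentials. The paper itself gives no proof at all---the proposition is stated with a \qed\ and a citation to \cite{LQ}, Theorem~4.16 and Remark~4.17---so there is nothing to compare; your write-up is essentially the argument that reference would provide.
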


\subsubsection{}
Finally, to close this section, we shall recall the following variation of Clifford's theorem.
\begin{Lem}[\cite{Be}, Lemme~1.3]\label{clifford}
Let $C$ be a smooth projective curve of genus $q:=g(C)$, and let $D\ge 0$ be an effective divisor, then either 
\begin{enumerate}
\item $\deg D>2(q-1)$, and $\deg D=h^0(\cO_C(D))+q-1$; or

\item $2(h^0(\cO_C(D))-1) \le \deg D \le 2(q-1)$. In particular this time we have $h^0(\cO_C(D))\le q$.
\end{enumerate}
\end{Lem}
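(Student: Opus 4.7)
The plan is to distinguish the two cases by comparing $\deg D$ with $2(q-1) = \deg K_C$, where $K_C$ is a canonical divisor of $C$, and in each case invoke Riemann--Roch together (if necessary) with the classical Clifford inequality.

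First I would handle case~(1). Assume $\deg D > 2(q-1)$. Then $\deg(K_C - D) < 0$ forces $h^0(\cO_C(K_C - D)) = 0$, and Serre duality gives $h^1(\cO_C(D)) = 0$. Riemann--Roch then yields $h^0(\cO_C(D)) = \deg D + 1 - q$, which is exactly the claimed equality $\deg D = h^0(\cO_C(D)) + q - 1$.

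For case~(2), assume $\deg D \le 2(q-1)$. To prove $2(h^0(\cO_C(D)) - 1) \le \deg D$, I would further distinguish whether $D$ is special. If $h^0(\cO_C(K_C - D)) = 0$, then by Riemann--Roch $h^0(\cO_C(D)) = \deg D - q + 1$, so the inequality reduces to $\deg D \le 2q$, which follows from the standing hypothesis. Otherwise $h^0(\cO_C(K_C - D)) > 0$, so $D$ is an effective special divisor and the classical Clifford theorem directly supplies the bound $2(h^0(\cO_C(D)) - 1) \le \deg D$. Combining this with $\deg D \le 2(q-1)$ then gives the last assertion $h^0(\cO_C(D)) \le q$.

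There is no real obstacle here: the only non-trivial input is Clifford's inequality itself, and everything else is formal bookkeeping with Riemann--Roch and Serre duality.
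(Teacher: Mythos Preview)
Your argument is correct. Note, however, that the paper does not supply its own proof of this lemma: it is merely recalled from \cite{Be}, Lemme~1.3, and used as a black box. Your proof is exactly the standard one --- Riemann--Roch plus Serre duality for the non-special case, and the classical Clifford inequality for the special case --- so there is nothing to compare against here.
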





\section{Examples}\label{section 2}
In this section we will present some examples of surfaces of general type with negative $c_2$ and calculate some of their numerical invariants.

\subsection{Examples of M. Raynaud}\label{Raynaud's example}
Let us briefly recall the examples of M. Raynaud \cite{R}.

Let $k$ be an algebraically closed field of characteristic $p>2$, and assume $C$ is a  smooth projective curve of genus $q\ge 2$ such that there is an $f\in K(C)$ satisfying $(df)=pD$ for some divisor $D$. Let $\cL=\cO_C(D)$, $l=\deg D$ and $\cM$ be any invertible sheaf on $C$ such that $\cM^{\otimes 2}\simeq \cL$. We have $m:=\deg \cM=l/2$ and $2q-2=pl=2pm$.

By \cite{R} Proposition~1, we can find a rank $2$ locally free sheaf $\cE$ and its associated ruled surface $\rho: Z:=\mathbb{P}(\cE)\rightarrow C$ such that 
\begin{enumerate}
\item $\det(\cE)\simeq \cL$, in particular $\cO(1)^2=l$;
\item there is a section $\Sigma_1\in |\cO(1)|$;
\item there is a multi-section $\Sigma_2$ such that the canonical morphism $\rho: \Sigma_2\rightarrow C$ is isomorphic to the Frobenius morphism. 
\item $\Sigma_1\cap \Sigma_2=\emptyset$,
\item $\cO_Z(\Sigma_2)=\cO(p)\otimes \rho^*(\cL^{\otimes {-p}})$.
\end{enumerate}
 
Let $\Sigma:=\Sigma_1+\Sigma_2$, then $\Sigma$ is a nonsingular divisor of $Z$, and $$\cO_Z(\Sigma)=\cO(p+1)\otimes \rho^*(\cL^{\otimes{-p}})=(\cO(\frac{p+1}{2})\otimes \rho^*(\cM^{\otimes {-p}}))^{\otimes 2},$$ hence the data $\{\cO(\frac{p+1}{2})\otimes \rho^*(\cM^{\otimes {-p}}), \Sigma\in |(\cO(\frac{p+1}{2})\otimes \rho^*(\cM^{\otimes {-p}}))^{\otimes 2}|\}$ defines a flat double cover $\pi: S\rightarrow Z$ by Construction \ref{Ls}. 

\begin{Prop} We have
\begin{enumerate}
\item $K_Z=\cO(-2)\times (\rho^*\cL^{\otimes p+1})$, and $K_S=\pi^*(\cO(\frac{p-3}{2})\otimes \rho^*\cM^{\otimes p+2});$
\item $\chi(\cO_S)=(p^2-4p-1)l/8$, $K_S^2=(3p^2-8p-3)l/2$, and $c_2(S)=-4(q-1)$;
\item $S$ is a minimal surface of general type if $p\ge 5$.
\end{enumerate}
\end{Prop}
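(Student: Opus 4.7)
My plan is to handle parts (1) and (2) by intersection theory on the ruled surface $Z$, using the dualizing-sheaf formula for flat double covers together with the standard formulas for a projective bundle, and to reduce part (3) to the ampleness of the divisor $H$ on $Z$ satisfying $K_S=\pi^*H$.

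For (1), the key identification is $\omega_C\cong\cL^{\otimes p}$, which is immediate from $(df)=pD$. The standard formula $K_Z=\cO(-2)\otimes\rho^*(\det\cE\otimes\omega_C)$ then gives the claim for $K_Z$. For $K_S$, apply the dualizing-sheaf formula $\omega_{S/Z}=\pi^*\cN$ (Corollary~\ref{formula of dualizing sheaf}) with $\cN=\cO(\tfrac{p+1}{2})\otimes\rho^*\cM^{\otimes -p}$, and simplify $K_Z\otimes\cN$ using $\cL=\cM^{\otimes 2}$.

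For (2), use Corollary~\ref{genus}(3) with branch $\Sigma=\Sigma_1+\Sigma_2$. The intersection numbers on $Z$ come from the explicit descriptions: $\Sigma_1^2=l$, and $\Sigma_2^2=-p^2l$ either from its expression $\cO(p)\otimes\rho^*\cL^{\otimes -p}$ or by adjunction using $p_a(\Sigma_2)=q$ (since $\rho|_{\Sigma_2}$ is isomorphic to $F_C$); together with $\Sigma_1\cdot\Sigma_2=0$ and $2q-2=pl$, this yields the stated $\chi(\cO_S)$. For $K_S^2$, the morphism $\pi$ has degree $2$, so $K_S^2=2H^2$ with $H=\cO(\tfrac{p-3}{2})\otimes\rho^*\cM^{\otimes p+2}$; a routine intersection calculation on $Z$ produces the stated value, and $c_2(S)$ then follows from Noether's formula.

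For (3), when $p\ge 5$ I would show that $K_S$ is ample; this delivers general type automatically and also minimality, since a $(-1)$-curve $E$ would violate $K_S\cdot E\ge 0$. Because $\pi$ is finite, ampleness of $K_S=\pi^*H$ is equivalent to ampleness of $H$ on $Z$, which I verify by the Nakai--Moishezon criterion. One checks $H^2>0$ and that $H$ meets the fiber class $F$, the section $\Sigma_1$, and the multi-section $\Sigma_2$ all positively; for any other irreducible curve $C'$ with numerical class $\alpha\Sigma_1+\beta F$ and $\alpha\ge 1$, the constraint $C'\cdot\Sigma_2=\beta p\ge 0$ forces $\beta\ge 0$, whence $H\cdot C'>0$. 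The hypothesis $p\ge 5$ is precisely what makes $(p-3)/2>0$, which is the positivity on fibers required. The main technical obstacle is the horizontal-curve case in the Nakai check (since $\cE$ need not be semistable), and it is resolved cleanly by using the distinguished multi-section $\Sigma_2$ as a rigidity constraint.
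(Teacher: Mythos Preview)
Your proofs of parts (1) and (2) coincide with the paper's, both in method and in the key inputs (the identification $\omega_C\simeq\cL^{\otimes p}$, Corollary~\ref{formula of dualizing sheaf}, Corollary~\ref{genus}(3), and the degree-$2$ pullback for $K_S^2$).

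For part (3), however, you take a genuinely different route. The paper observes that every closed fibre of $S\to C$ is an irreducible curve of arithmetic genus $(p-1)/2$: the restriction of $\Sigma$ to a fibre $Z_c\cong\mathbb{P}^1$ is a divisor of degree $p+1$ supported at two distinct points (since $\Sigma_1\cap\Sigma_2=\emptyset$), and $p$ is odd, so the double cover is integral. Since $g(C)\ge 2$, a $(-1)$-curve on $S$ would have to lie in a fibre; irreducibility and $F^2=0$ then rule this out, giving minimality. General type follows from classification: a minimal surface with $q\ge 2$ and $K^2>0$ cannot have $\kappa\le 1$.

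Your argument instead proves directly that $K_S=\pi^*H$ is ample, via Nakai--Moishezon on $Z$. The decisive step is your use of the multi-section $\Sigma_2$ as a test curve: for an irreducible horizontal $C'\ne\Sigma_2$ with class $\alpha\Sigma_1+\beta F$, the inequality $C'\cdot\Sigma_2=\beta p\ge 0$ forces $\beta\ge 0$, and then $H\cdot C'>0$ follows. This is correct and neatly circumvents any semistability issues for $\cE$. Your approach is more self-contained (it avoids invoking surface classification) and yields the slightly stronger conclusion that $K_S$ is ample, not merely nef and big; the paper's argument is shorter and makes the geometric reason for minimality (irreducible fibres of high genus) more transparent.
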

\begin{proof}
By Proposition \ref{Property}, $S$ is regular.

(1) Since $\det\cE=\cL$, $\Omega_{C/k}\simeq \cL^{\otimes p}$, we immediately get $$K_Z=\cO(-2)\otimes \rho^*\cL^{\otimes p+1},$$ then by Corollary \ref{formula of dualizing sheaf}, $$\omega_{S/Z}=\pi^*(\cO(\frac{p+1}{2})\otimes \rho^*\cM^{\otimes -p}),$$ hence $$K_S=\pi^*(\cO(\frac{p-3}{2})\otimes \rho^*\cM^{\otimes p+2}).$$

(2) By Corollary \ref{genus}, we have $$\chi(\cO_S)=2\chi(\cO_Z)+\frac{\Sigma^2+2\Sigma\cdot K_Z}{8}=\frac{p^2-4p-1}{8}l,$$ and $$K_S^2=\pi^*(\cO(\frac{p-3}{2})\otimes \rho^*\cM^{\otimes p+2})^2=2(\cO(\frac{p-3}{2})\otimes \rho^*\cM^{\otimes p+2})^2=\frac{3p^2-8p-3}{2}l,$$ therefore $c_2(S)=12\chi(\cO_S)-K_S^2=-2pl=-4(q-1)$.

(3) If $p\ge 5$, then any closed fibre of $S\rightarrow C$ is irreducible and  has arithmetic genus $(p-1)/2$, hence $S$ is a minimal surface of general type. 
\end{proof}
\begin{Rem}\begin{enumerate}
\item Note that the fibration $S\rightarrow C$  is uniruled. In this case we do not have the positivity of the dualizing sheaf $\omega_{S/C}$ (compare with \cite{SZ} \S~2). We shall point out that $\omega_{S/C}$ here is \emph{not nef}. In fact $\omega_{S/C}=\omega_{S/Z}\otimes \pi^*\omega_{Z/C}=\pi^*(\cO(\frac{p-3}{2})\otimes \rho^*\cM^{\otimes 2-p})$, however $$\Sigma_1\cdot (\cO(\frac{p-3}{2})\otimes \rho^*\cM^{\otimes 2-p})=-l/2<0.$$

\item Note that $$\frac{\chi(\cO_X)}{K_X^2}=\frac{p^2-4p-1}{4(3p^2-8p-3)}.$$ This number is exactly our conjectural $\kappa_p$ (Conjecture \ref{Conjecture}).

\item If $p=3$, Raynaud's example is an quasi-elliptic surface, hence it is not of general type. This is one of the reasons why we can find $\kappa_5$ but not $\kappa_3$.
\end{enumerate}
\end{Rem}

\subsection{Examples in characteristic $2, 3$}\label{examples 2}
First we give an example of surfaces with negative $c_2$ over a field $k$ of characteristic $3$.
Choose $m=3^n-1$ points, say $t_1,...,t_m$ on $\mathbb{A}_k^1=\mathbb{P}_k^1\backslash\{\infty\}$, and  we can construct
a cyclic cover $C\rightarrow \mathbb{P}_k^1$ of degree $m$ such that the branch locus equals to $B:=\sum\limits_i t_i$ canonically as we did before for flat double covers (see Construction \ref{Ls}). In particular by Hurwitz's formula,  $q-1: =g(C)-1=(3^n-1)(3^n-4)/2$

Let $Y:=\mathbb{P}_C^1$,  $p_1: Y\rightarrow C, $ and $p_2: Y\rightarrow \mathbb{P}_k^1$ be the canonical projections. Let $\Pi_1$ be the divisor $C\times_k \{\infty\}$, and $\Pi_2$ be the divisor which is the image of $C\stackrel{F^n\times h }{\longrightarrow}C\times_k \mathbb{P}_k^1=Y$, here $F^n$ is the $n$-th Frobenius morphism. Then $\Pi:=\Pi_1+\Pi_2$ is an even divisor (\ie, $\Pi=2D$ for some divisor $D$), in particular we can define a flat double cover $\pi: S'\rightarrow Y$ whose branch locus equals to $\Pi$. 

\begin{Prop}
 Let $S$ be the minimal model of $S'$, when $n\ge 2$, $S$ is of general type and $c_2(S)\le -4(q-1)+3m$.
\end{Prop}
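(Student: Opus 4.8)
The plan is to compute the numerical invariants of the flat double cover $\pi: S' \rightarrow Y$ using the machinery of Section~2.2, then pass to the minimal model $S$. The key observation is that the branch divisor $\Pi = \Pi_1 + \Pi_2$ is a sum of two sections of $Y = \mathbb{P}_C^1 \rightarrow C$, each isomorphic to $C$, but they are not disjoint: their intersection points are precisely where $\Pi$ acquires singularities, and these singular points contribute negatively to $\chi(\cO_{S'})$ through the correction terms $\xi_y$ of Definition~\ref{num}. First I would apply Corollary~\ref{genus}(3) to the smooth surface $Y$ to obtain $\chi(\cO_{S'}) = 2\chi(\cO_Y) + (\Pi^2 + 2\Pi \cdot K_Y)/8$, where $\chi(\cO_Y) = \chi(\cO_{\mathbb{P}^1_C}) = 1 - q$ since $Y$ is a $\mathbb{P}^1$-bundle over $C$. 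I would compute $K_Y = p_1^* K_C \otimes \cO(-2\Pi_1)$ (the relative canonical of the ruled surface plus pullback from the base), the self-intersections $\Pi_1^2 = 0$, $\Pi_2^2$, and the crucial intersection number $\Pi_1 \cdot \Pi_2$ and $\Pi_2^2$ using that $\Pi_2$ is the graph of $F^n \times h$, so that $\Pi_2 \cdot \Pi_1 = \deg$ of the composite at $\infty$ and $\Pi_2 \cdot f = 3^n$ for a fibre $f$.

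\textbf{The main numerical input} is the intersection of $\Pi_1$ and $\Pi_2$. Since $\Pi_1 = C \times \{\infty\}$ and $\Pi_2$ is the image of $c \mapsto (F^n(c), h(c))$, these meet where $h(c) = \infty$, i.e. above the points $t_i$ and possibly $\infty$ lying under the branch locus of $C \to \mathbb{P}^1_k$. I expect $\Pi_1 \cdot \Pi_2 = m = 3^n - 1$ (one transverse intersection over each branch point), and at each such crossing $\Pi$ is locally the union of two smooth branches meeting transversally, hence a \emph{negligible singularity of the first kind} in the sense of Definition~\ref{neglect}, contributing $\xi_y = 0$ to the normalisation but still forcing $\Pi$ to fail to be smooth. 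The delicate point is that $S'$ need not be smooth: after normalising and resolving via the canonical resolution (Definition~\ref{canonical}), only the non-negligible singularities cost anything in $\chi$, while $K^2$ drops according to~\eqref{variation of K^2}. I would then assemble $c_2(S') = 12\chi(\cO_{S'}) - K_{S'}^2$ and track how passage to the minimal model $S$ can only \emph{increase} $c_2$ by blowing down $(-1)$-curves, each of which raises $c_2$ by $1$; counting at most $3m$ such blow-downs (tied to the $m = 3^n-1$ crossing points, each potentially producing up to $3$ exceptional curves) yields the bound $c_2(S) \le c_2(S') + 3m$.

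\textbf{The hardest step} will be establishing that $S$ is of general type when $n \ge 2$ and controlling the exact correction $3m$. For general type I would use that the fibres of $S \to C$ are double covers of $\mathbb{P}^1$ branched at two points of growing degree, giving fibre genus $\ge 1$ and Kodaira dimension $2$ provided the relative canonical is big, which should follow from $\chi(\cO_S) + K_S^2 > 0$ together with the explicit sign of $\chi$ once $n \ge 2$; here the threshold $n \ge 2$ enters because for $n = 1$ the invariants degenerate. The bound $3m$ is an \emph{upper} estimate on the damage done by resolving and minimalising: each of the $m$ singular points of the branch locus can, in the worst case, require resolution contributing a bounded number of exceptional curves whose subsequent contraction adjusts $c_2$ by at most $3$. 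I would make this precise by showing the leading term $c_2(S') = -4(q-1)$ comes from the formula $c_2 = 12\chi - K^2 = 12\bigl(2(1-q) + (\Pi^2 + 2\Pi\cdot K_Y)/8\bigr) - 2(\text{relative invariant})$, with the discrepancy from the ideal disjoint case absorbed into the $+3m$ term, rather than attempting an exact resolution graph.
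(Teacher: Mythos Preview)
Your plan diverges from the paper's in a way that matters, and it contains two genuine errors that would prevent it from closing.

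\medskip
\textbf{Geometry of the branch locus.} You describe $\Pi_1+\Pi_2$ as a sum of two \emph{sections} of $Y=\mathbb{P}^1_C\to C$. This is not right: $\Pi_2$ is the image of $c\mapsto (F^n(c),h(c))$, and since $p_1\circ\phi=F^n$ has degree $3^n$, the projection $\Pi_2\to C$ is purely inseparable of degree $3^n$, so $\Pi_2\cdot f=3^n$, not $1$ (you in fact write $\Pi_2\cdot f=3^n$ later, contradicting your own ``two sections'' picture). More seriously, you locate the singularities of $\Pi$ only at the $m$ transversal crossings $\Pi_1\cap\Pi_2$ (which lie over $h^{-1}(\infty)$, not over the branch points $t_i$ as you suggest). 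You miss that $\Pi_2$ itself acquires $m$ \emph{cuspidal} singularities, one over each ramification point of $h$ (locally the image of $s\mapsto(s^{3^n},s^{3^n-1})$). These are not negligible; resolving them is what produces most of the extra fibre components. With only the $m$ nodal crossings in hand, your correction term cannot be tied to the actual resolution.

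\medskip
\textbf{Direction of the $c_2$ correction.} You write that passing to the minimal model ``can only \emph{increase} $c_2$ by blowing down $(-1)$-curves, each of which raises $c_2$ by $1$''. This is backward: contracting a $(-1)$-curve \emph{decreases} the topological Euler number by $1$. The inequality in the statement comes the other way: one computes $c_2$ on the \emph{smooth resolution} $S_1$ and then uses $c_2(S)\le c_2(S_1)$. Computing a would-be ``$c_2(S')$'' on the singular model via $12\chi-K^2$ and then adding a positive correction to reach $c_2(S)$ is not a valid route.

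\medskip
\textbf{What the paper actually does.} The paper avoids Noether's formula entirely for this step. It performs the canonical resolution by blowing up the $2m$ bad points of $\Pi$ (the $m$ crossings and the $m$ cusps of $\Pi_2$), obtaining a smooth $S_1$ fibred over $C$. The generic fibre is still a rational curve with a unibranch singularity, so the formula
\[
c_2(S_1)=-4(q-1)+\sum_{c}\bigl(b_2((S_1)_c)-1\bigr)
\]
from (\ref{estimation of c_2}) applies directly. Counting the components of the $2m$ reducible fibres gives the $O(m)$ term, and $c_2(S)\le c_2(S_1)$ because $S$ is reached from $S_1$ by contractions. No computation of $\chi(\cO_{S'})$, $K^2$, or resolution graphs is needed. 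If you want to salvage your approach, you should work on $S_1$ rather than $S'$, identify \emph{all} $2m$ singular points of $\Pi$, and reverse the sign of the blow-down correction; but the fibration/G-O-S route is both shorter and what the paper intends.
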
 
\begin{proof}[Sketch of the proof]
We consider the canonical resolution of $S$. We have $\Pi_1$ and $\Pi_2$ intersect properly, and the singularities of $\Pi_2$ are the pre-images of $B$. Blowing up these points($2m$ points in total), we get the desingularization of $\Pi$. Consequently we get a desingularization $S_1\to S'$. It is clearly $S_1\rightarrow C$ has $2m$ non-irreducible fibres (each has $2$ components), therefore we have $$c_2(S)\le c_2(S_1)=-4(q-1)+3m$$ by Grothendieck-Ogg-Shafarevich formula (see formula (\ref{estimation of c_2}) below).
\end{proof}
\begin{Rem}
When $n\rightarrow +\infty$, we see that $c_2(S)/(q-1)\rightarrow -4$.
\end{Rem}

We mention that in characteristic $2$ there are also surfaces of general type with negative $c_2$. One example is \cite{Li1}, Theorem 7.1, where $c_1^2=14, \chi=1$ and $c_2=-2$.

\section{Surfaces of general type with negative $c_2$}\label{section: surfaces 0}
Let $k$ be any algebraically closed field of characteristic $p>0$, and let $X$ be a minimal surface of general type with negative $c_2(X)$.  We first recall a theorem of N. Shepherd-Barron on the structure of the Albanese morphism of $X$.

\begin{Thm} [\cite{SB2} Theorem 6]\label{Albanese}
The Albanese morphism of $X$ factors through a fibration $f: X\rightarrow C$ such that:

(1) $C$ is a nonsingular projective curve of genus $q:=g(C)\ge 2$, $f_{*}\cO_{X}\simeq \cO_{C}$, and $\Alb_{X}\simeq \Alb_{C}$.

(2) The geometric generic fibre of $f$ is an integral singular rational curve with unibranch singularities only. 
\end{Thm}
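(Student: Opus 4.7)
My plan is to analyze the Albanese map $\alpha:X\to A:=\Alb(X)$ case-by-case on $\dim\alpha(X)$, extract a fibration, and then apply a Grothendieck-Ogg-Shafarevich (GOS) Euler characteristic formula. From $\ell$-adic Betti numbers one has $c_2(X)=2-4q+b_2(X)$ with $q:=\dim A$, so $c_2(X)<0$ forces $q\ge 1$, i.e., $\alpha$ is non-constant. If $\alpha$ were generically finite onto $A$ (dimension $2$), then $\alpha^*\Omega^1_A\simeq \cO_X^{\oplus 2}$ would inject into $\Omega^1_X$ with torsion cokernel supported on the critical locus; a Chern class computation (carefully treating possible wild inseparable ramification) yields $c_2(X)\ge 0$, a contradiction. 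Hence $\alpha(X)=:C_0\subset A$ is a curve. Stein-factor $\alpha$ as $X\xrightarrow{f} C\to C_0\hookrightarrow A$ with $C$ the normalisation of $C_0$, giving a fibration $f$ with $f_*\cO_X=\cO_C$. Functoriality of Albanese on $X\to C\to A$ yields $\Alb(X)\twoheadrightarrow \Alb(C)\to A$ whose composition is the identity on $\Alb(X)$, and combined with surjectivity of $\Alb(X)\to \Alb(C)$ (from $f$ surjective), both arrows must be isomorphisms. Thus $\Alb(X)\simeq \Alb(C)$ and in particular $g(C)=q\ge 1$.

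Next, apply the GOS formula to $f$:
$$c_2(X)=\chi_{\mathrm{top}}(C)\,\chi_{\mathrm{top}}(F_{\bar\eta})+\sum_{c\in C}\bigl(\chi_{\mathrm{top}}(X_c)-\chi_{\mathrm{top}}(F_{\bar\eta})\bigr)+(\text{wild, non-negative}),$$
with $F_{\bar\eta}$ the geometric generic fibre, where every term on the right after the first is non-negative. Thus $c_2(X)<0$ forces $\chi_{\mathrm{top}}(C)\chi_{\mathrm{top}}(F_{\bar\eta})<0$; this immediately rules out $g(C)=1$ (for which $\chi_{\mathrm{top}}(C)=0$) and gives $g(C)\ge 2$ together with $\chi_{\mathrm{top}}(F_{\bar\eta})>0$, completing part (1).

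For the structure of the generic fibre, $F_\eta/K$ with $K=k(C)$ is regular and integral (inherited from $X$) and geometrically connected (from $f_*\cO_X=\cO_C$, giving $H^0(F_\eta,\cO_{F_\eta})=K$); its arithmetic genus satisfies $p_a(F_\eta)\ge 2$, otherwise the Enriques-Kodaira classification applied to a fibration over a curve of genus $\ge 2$ with generic fibre of genus $\le 1$ would deny $X$ general type. By Remark~\ref{modulo p} every non-smooth point of $F_\eta$ has purely inseparable residue field over $K$, so the normalisation $\tilde F_\eta\to F_\eta$ becomes a bijective purely inseparable homeomorphism after base change to $\bar K$. Consequently $F_{\bar\eta}$ is geometrically reduced and irreducible (hence integral), has only unibranch singularities, and satisfies $\chi_{\mathrm{top}}(F_{\bar\eta})=2-2g(\tilde F_{\bar\eta})$. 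Combining with $\chi_{\mathrm{top}}(F_{\bar\eta})>0$ forces $g(\tilde F_{\bar\eta})=0$, i.e., $F_{\bar\eta}$ is geometrically rational, and $p_a(F_\eta)\ge 2>g(\tilde F_{\bar\eta})$ forces $F_{\bar\eta}$ to be singular. The main obstacle is the Chern class argument excluding a generically finite Albanese: in positive characteristic, inseparable ramification of $\alpha$ must be controlled scheme-theoretically on its branch support to guarantee the sign $c_2(X)\ge 0$, typically appealing to Miyaoka-type positivity or Frobenius-stability for $\Omega^1_X$ on a minimal surface of general type.
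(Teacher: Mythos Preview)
The paper does not prove this theorem; it is quoted from \cite{SB2}, Theorem~6, and used as a black box. So there is no ``paper's own proof'' to compare against here, but one can still assess your argument against what is actually needed.

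Your GOS-based treatment of parts (1) and (2), \emph{once a fibration $f:X\to C$ has been produced}, is essentially correct: the local terms $\chi_{\mathrm{top}}(X_c)-\chi_{\mathrm{top}}(F_{\bar\eta})$ and the Swan contributions are indeed non-negative, so $c_2(X)<0$ forces $g(C)\ge 2$ and $\chi_{\mathrm{top}}(F_{\bar\eta})>0$; regularity of $F_\eta$ (inherited from $X$) gives unibranch singularities on $F_{\bar\eta}$, whence $\chi_{\mathrm{top}}(F_{\bar\eta})=2-2g(\widetilde{F_{\bar\eta}})$ and rationality follows. The exclusion of $p_a(F_\eta)\le 1$ via Kodaira dimension is also fine.

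The genuine gap is exactly the one you flag at the end: ruling out a two-dimensional Albanese image. Your proposed route, ``$\alpha^*\Omega^1_A\hookrightarrow\Omega^1_X$ with torsion cokernel, then a Chern class count gives $c_2(X)\ge 0$,'' does not go through in characteristic $p$. If $\alpha$ is inseparable (which cannot be excluded a priori), the pullback map on $1$-forms need not be injective at all; it can even vanish identically if $\alpha$ factors through a relative Frobenius. So neither the injectivity nor the ``torsion cokernel'' premise is available, and no positivity statement for $c_2$ drops out. Invoking ``Miyaoka-type positivity or Frobenius-stability for $\Omega^1_X$'' is not a repair: those results are precisely what fails for the surfaces under discussion (indeed $c_1^2>4c_2$ here).

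Shepherd-Barron's actual argument in \cite{SB2} (building on \cite{SB}) does not analyse $\dim\alpha(X)$ directly. From $c_2<0<c_1^2$ one gets $c_1(\Omega^1_X)^2>4c_2(\Omega^1_X)$, so $\Omega^1_X$ is Bogomolov-unstable; the maximal destabilising sub-line-bundle defines a (possibly $p$-closed) foliation, and the machinery of \cite{SB} turns this into an actual fibration $X\to C$ with the stated properties. If you want a self-contained proof along your lines, you must either supply a characteristic-$p$ substitute for the injectivity of $\alpha^*\Omega^1_A\to\Omega^1_X$, or bypass the two-dimensional case by the instability/foliation argument.
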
 

We then introduce the following notation according to this theorem:
\begin{enumerate}[a)]
\item  $K:=K(C)$ (resp. $\overline{K}$; $\eta$;  
$\overline{\eta}$) is the function field of $C$ (resp. a fixed algebraic closure of $K$;
the generic point of $C$;  a fixed geometric generic point of $C$);
    
\item  $F:$ a general fibre of $f$;
    
\item  $g:=p_{a}(F)$ is the arithmetic genus of any fibre of $f$;
    
\item  $p_g:=h^2(X,\cO_X)$ is the geometric genus of $X$.

\item  $q(X):=h^{1}(X,\mathcal{O}_{X})$ is the irregularity of $X$. Since $\Alb_{X}\simeq \Alb_{C}$, we have the following inequality due to Igusa \cite{Ig},
\begin{equation} 
q(X)\ge \dim \mathrm{Alb}_{X}=\dim \mathrm{Alb}_{C}=q;
\end{equation}
\item  Denote by $Z$  the fixed part of $|K_X|$, $Z_h$  the horizontal part of $Z$ and $Z_0:=(Z_h)_{\mathrm{red}}$;
    
\item  Let $f^*(\Omega_{C/k})(\Delta)$ be the saturation of the injection $f^*\Omega_{C/k}\to \Omega_{X/k}$. Define $N:=f^*K_C+\Delta$ to be the divisor class of $f^*(\Omega_{C/k})(\Delta)$. It is well known that $\Delta$ is supported on the non-smooth locus of $f$, in particular each prime horizontal component of $\Delta$ is inseparable over $C$. 

\item For any effective divisor $D$ on $X$, we will use both $D_\eta$ and $D|_{X_\eta}$ to denote its restriction to the generic fibre of $f$ and we use $D_h,D_v$ to denote its horizontal and vertical part.
\end{enumerate}
If let $S:=X_{\eta}/K$, then by our construction we have $\cO_{\Delta_\eta}\simeq\cA:=(\Omega_{X_\eta/K})_{\mathrm{tor}}$ and $\cO_X(\Delta)|_{X_{\eta}}\simeq\det \cA$. Therefore Corollary~\ref{complete} implies the following lemma.
\begin{Lem}\label{degree of Delta}
We have \begin{enumerate}
\item $(p-1)\mid 2g$;
\item if $g<(p^2-1)/2$, then $\deg_K(\Delta_\eta)=2pg/(p-1)$;
In particular, if $g=(p-1)/2$, then  $\Delta_h$ is integral.
\end{enumerate}
\end{Lem}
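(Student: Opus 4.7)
The plan is to reduce everything to the curve-theoretic results of Subsection~\ref{subsection: genus change} applied to the generic fibre $S := X_\eta$ over $K = K(C)$. The first step is to verify that $S$ fits the hypotheses of Corollary~\ref{complete}: since $X$ is smooth, $X_\eta$ is a regular (hence normal) projective curve over $K$, and the arithmetic genus of $X_\eta/K$ equals $g$ by flatness of $f$. Theorem~\ref{Albanese}(2) says that the geometric generic fibre $X_{\bar\eta}$ is an integral rational curve with unibranch singularities, so its normalisation is $\mathbb{P}^1_{\bar K}$; thus $S$ is geometrically rational in the sense of Definition~\ref{g-r}.

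For part (1), I would apply Tate's genus change formula (Corollary~\ref{Tate}) with $L = \bar K$: the normalisation $Y$ of $S_{\bar K}$ has $g(Y) = 0$, hence $(p-1) \mid 2(g - 0) = 2g$. (Alternatively, Corollary~\ref{complete}(1) gives the same divisibility once one notes $S$ is necessarily non-smooth when $g > 0$.)

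For part (2), I would use the identifications given just before the lemma statement, namely $\cO_{\Delta_\eta} \simeq \cA := (\Omega_{X_\eta/K})_{\tor}$ and $\cO_X(\Delta)|_{X_\eta} \simeq \det \cA$. Then
\begin{equation*}
\deg_K(\Delta_\eta) = \deg \bigl(\cO_X(\Delta)|_{X_\eta}\bigr) = \deg \det \cA = \deg \cA = \deg \Omega_{S/K,\tor},
\end{equation*}
where the third equality uses Proposition~\ref{genus-c}(2). Now invoking Corollary~\ref{complete}(2), under the hypothesis $g < (p^2-1)/2$, this equals $2pg/(p-1)$.

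For the last assertion, when $g = (p-1)/2$, Corollary~\ref{complete}(1) tells us $X_\eta$ has exactly one non-smooth point $s$, of degree $p$ over $K$. By Remark~\ref{modulo p}, $\mathrm{Supp}(\cA) = \mathrm{Supp}(\Delta_\eta)$ is contained in the set of non-smooth points, so it consists solely of $s$. Since $\deg_K \Delta_\eta = 2p \cdot (p-1)/2 / (p-1) = p = [K(s) : K]$, by Proposition~\ref{genus-c}(2) we must have $\Len_{\cO_{X_\eta, s}} \cA_s = 1$; thus $\Delta_\eta$ is a reduced closed point, hence integral. Since $\Delta_h$ is a horizontal divisor whose restriction to $X_\eta$ is $\Delta_\eta$, the irreducibility of $\Delta_\eta$ forces $\Delta_h$ to be a single prime divisor with multiplicity one, hence integral. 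There is no serious obstacle here: all the hard work is packaged in Corollary~\ref{complete}, and the lemma is essentially its translation to the relative setting via the standard identification of $\Delta_\eta$ with $(\Omega_{X_\eta/K})_{\tor}$.
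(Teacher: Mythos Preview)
Your proof is correct and is exactly the argument the paper intends: the paper's own ``proof'' consists solely of the sentence that Corollary~\ref{complete} implies the lemma (together with the identifications $\cO_{\Delta_\eta}\simeq\cA$ and $\cO_X(\Delta)|_{X_\eta}\simeq\det\cA$ stated just before it), and you have faithfully unpacked precisely those implications. The only minor remark is that for part~(1) you invoke Corollary~\ref{Tate} rather than Corollary~\ref{complete}; this is the cleaner citation, since Corollary~\ref{complete}(1) literally gives $2g\ge p-1$ rather than the divisibility, so your choice is in fact an improvement.
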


From Noether's formula (\ref{Noether formula}), to bound $\kappa_p$ from below, we only have to bound  \emph{$\lambda(X):=K_X^2/(q-1)$} and  \emph{$\gamma(X):=c_2(X)/(q-1)$}. One lower bound of $\gamma(X)$ comes out naturally once we apply Grothendieck-Ogg-Shafarevich formula (\cite{G}, Expos\'e X) to  $X$ to obtain the following formula:

\begin{equation}\label{estimation of c_2}
c_{2}(X)=-4(q-1)+\sum\limits_{c\in |C|}(b_{2}(X_{c})-1)\geq -4(q-1).
\end{equation}
Here we note that $H^{1}_{\mathrm{\acute{e}t}}(X_{\overline{\eta}}, \mathbb{Q}_l)=0$, as  $X_{\overline{\eta}}$ is a rational curve with unibranch singularities only, hence the Swan conductor and $b_1(X_c)$ both vanish. By the way, this formula also shows that $X$ is supersingular in the sense of Shioda.
\begin{Prop}\label{supersingular}
The surface $X$ is supersingular in the sense that $b_2(X)=\varrho(X)$, here $\varrho(X)$ is the Picard number of $X$.
\end{Prop}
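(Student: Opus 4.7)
The plan is a Shioda--Tate style dimension count. Since the $l$-adic cycle class map is injective modulo torsion, $\varrho(X)\le b_2(X)$ holds unconditionally, so it suffices to exhibit $b_2(X)$ linearly independent algebraic classes in $H^{2}(X,\mathbb{Q}_l)$.

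First I would pin down the value of $b_2(X)$ using the Grothendieck--Ogg--Shafarevich formula~(\ref{estimation of c_2}) just derived. The isomorphism $\Alb_X\simeq \Alb_C$ from Theorem~\ref{Albanese} gives $\dim \Pic^0_X=q$, hence $b_1(X)=2q$, and Poincar\'e duality then yields $c_2(X)=2-4q+b_2(X)$. Comparing with~(\ref{estimation of c_2}) produces
\[
b_2(X)=2+\sum_{c\in |C|}(b_2(X_c)-1),
\]
where $b_2(X_c)$ equals the number of irreducible components of the curve $X_c$. This is the target dimension.

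Next I would produce the algebraic classes. The candidates are the classical ones: the class $F$ of a general fiber, the irreducible components of each fiber $X_c$, and any ample class $H$ (which satisfies $H\cdot F>0$). By Zariski's lemma, for each $c$ the intersection form restricted to the span $V_c$ of the components of $X_c$ is negative semi-definite with one-dimensional kernel generated by the class of $F$; this simultaneously shows that the components of $X_c$ are linearly independent in $\mathrm{NS}(X)_{\mathbb{Q}}$, so $\dim V_c=b_2(X_c)$, and that $F\in V_c$. A short pairing argument, using that components of distinct fibers are disjoint and applying Zariski's lemma fiberwise, shows that the images of the $V_c$ in $\mathrm{NS}(X)_{\mathbb{Q}}/\mathbb{Q}F$ are linearly independent. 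Consequently $\sum_c V_c$ has dimension $1+\sum_c(b_2(X_c)-1)$. Every element of $\sum_c V_c$ is purely vertical and hence has zero intersection with $F$, while $H\cdot F>0$, so adjoining $H$ yields exactly $b_2(X)$ independent algebraic classes, completing the count.

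The argument is essentially formal once~(\ref{estimation of c_2}) and the equality $b_1(X)=2q$ are in hand, and I do not anticipate serious obstacles. The step deserving the most care is verifying the independence of the $V_c$ modulo $\mathbb{Q}F$, but this is a standard consequence of Zariski's lemma applied fiber-by-fiber together with the mutual disjointness of distinct fibers of $f$.
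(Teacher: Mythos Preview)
Your proposal is correct and follows essentially the same Shioda--Tate count as the paper: both compute $b_2(X)=2+\sum_c(b_2(X_c)-1)$ from $b_1(X)=2q$ together with the Grothendieck--Ogg--Shafarevich formula~(\ref{estimation of c_2}), and both match this against the independent algebraic classes furnished by a horizontal class, the general fiber, and the fiber components. Your write-up is more explicit about the Zariski-lemma independence argument, which the paper simply invokes as the inequality $\varrho(X)\ge 2+\sum_c(b_2(X_c)-1)$, but the underlying strategy is identical.
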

\begin{proof}
Using the fibration $f:X\rightarrow C$ we have $$\varrho(X)\ge 2+\sum\limits_{c\in |C|} (^\sharp\{\text{irreducible components of} \ X_c\}-1)=2+\sum\limits_{c\in |C|} (b_2(X_c)-1).$$ Conversely since $b_1(X)=2q$ and $c_2(X)=2-2b_1+b_2$, we get $$b_2=2+\sum\limits_{c\in |C|} (b_2(X_c)-1)\le \varrho(X)$$ from the (\ref{estimation of c_2}). Hence $b_2=\varrho(X)$ and $X$ is supersingular.  
\end{proof}
\begin{Rem}
Since $X$ is dominated by a ruled surface, Proposition~\ref{supersingular} can also be derived from Lemma of \cite{Sh1} \S2.
\end{Rem}

It remains to find  lower bounds of $\lambda(X)=K_X^2/(q-1)$. Note that pulling back by an \'etale cover of $C$, $\lambda(X)$ is invariant while $q-1$ and $K_X^2$ are multiplied by the degree of the cover, thus we can assume $$q\gg \lambda(X)>0, \ K_X^2\gg 0.$$ We shall first go through N. Shepherd-Barron's method in \cite{SB2} quickly, based on which we will give an improvement.
\begin{Lem}\label{generalisation of SB}
Assume $H$ is a reduced horizontal divisor on $X$ such that any of its irreducible component is separable over $C$, then we have:
$$N\cdot H\le (H+K_X)\cdot H.$$ 
\end{Lem}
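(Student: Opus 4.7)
The plan is to reduce the inequality to a degree comparison on the normalization of each irreducible component of $H$, using the separability hypothesis to ensure that a certain natural map of line bundles is nonzero. Write $H=\sum_{i} H_{i}$ as its decomposition into irreducible components and, for each $i$, let $\pi_{i}\colon \widetilde{H}_{i}\to H_{i}\hookrightarrow X$ be the normalization; by hypothesis the composition $\widetilde{\phi}_{i}:=f\circ\pi_{i}\colon \widetilde{H}_{i}\to C$ is separable.

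I would then pull back the saturated inclusion $f^{*}\Omega_{C/k}(\Delta)\hookrightarrow\Omega_{X/k}$ along $\pi_{i}$ and compose with the natural differential $\pi_{i}^{*}\Omega_{X/k}\to\Omega_{\widetilde{H}_{i}/k}$, obtaining a morphism of line bundles on the smooth curve $\widetilde{H}_{i}$:
$$\psi_{i}\colon \widetilde{\phi}_{i}^{*}\Omega_{C/k}\otimes\cO_{\widetilde{H}_{i}}(\pi_{i}^{*}\Delta)\longrightarrow \Omega_{\widetilde{H}_{i}/k}.$$
The twist by the effective divisor $\pi_{i}^{*}\Delta$ is invisible at the generic point, so there $\psi_{i}$ reduces to the usual pullback differential $\widetilde{\phi}_{i}^{*}\Omega_{C/k}\to\Omega_{\widetilde{H}_{i}/k}$, which is an isomorphism by separability of $\widetilde{\phi}_{i}$. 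Hence $\psi_{i}$ is an injection of line bundles on a smooth curve; taking degrees and using the projection formula to identify $\deg\pi_{i}^{*}\Delta=\Delta\cdot H_{i}$ and $\deg\widetilde{\phi}_{i}^{*}\Omega_{C/k}=f^{*}K_{C}\cdot H_{i}$, I would conclude
$$N\cdot H_{i}=f^{*}K_{C}\cdot H_{i}+\Delta\cdot H_{i}\;\le\;2g(\widetilde{H}_{i})-2\;\le\;2p_{a}(H_{i})-2=(K_{X}+H_{i})\cdot H_{i},$$
the middle inequality being $p_{a}\ge g_{\mathrm{geom}}$ for an irreducible curve and the last adjunction on the smooth surface $X$.

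Finally, summing over $i$ and using the non-negativity $H_{i}\cdot H_{j}\ge 0$ for distinct components yields
$$N\cdot H=\sum_{i} N\cdot H_{i}\;\le\;\sum_{i}(K_{X}+H_{i})\cdot H_{i}\;\le\;(K_{X}+H)\cdot H,$$
which is the desired bound. The only non-routine input is the injectivity of $\psi_{i}$, and this rests entirely on separability of $\widetilde{\phi}_{i}$: without it the generic-point analysis collapses, which is precisely why the lemma requires every component of $H$ to be separable over $C$ and is consistent with the wild-ramification phenomena that the rest of the paper has to handle so carefully.
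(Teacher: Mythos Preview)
Your argument is correct and rests on the same key idea as the paper's proof: construct a natural map from the line bundle $\cO_X(N)$, restricted to the curve, into its differentials, check non-vanishing at the generic point via separability, and then compare degrees. The implementations differ slightly. The paper works on $H$ itself and targets the dualizing sheaf $\omega_{H/k}$ of the (possibly reducible, singular) curve: it builds the composition $\cO_X(N)|_H\to\Omega_{X/k}|_H\to\Omega_{H/k}\to\omega_{H/k}$, checks injectivity at each generic point $\xi_i$, and then uses that $H$ is locally a complete intersection (hence has no embedded points) to deduce global injectivity; taking degrees gives $N\cdot H\le\deg\omega_{H/k}=(K_X+H)\cdot H$ in one stroke. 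You instead pass to the normalization $\widetilde{H}_i$ of each component, target the K\"ahler differentials $\Omega_{\widetilde{H}_i/k}$ on a smooth curve, obtain $N\cdot H_i\le 2g(\widetilde{H}_i)-2\le (K_X+H_i)\cdot H_i$, and then sum using $H_i\cdot H_j\ge 0$ for $i\ne j$. Your route avoids the dualizing-sheaf and embedded-points arguments at the cost of an extra summation step; the paper's route is a little more direct but needs those facts. Both are short and clean.
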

\begin{proof}
We consider the morphism 
$\cO_X(N)|_{H}\rightarrow \omega_{H/k}$ given by the composition $\cO_X(N)|_{H}=f^*(\Omega_{C/k})(\Delta)|_{H}\rightarrow \Omega_{X/k}|_{H} \rightarrow \Omega_{H/k} \rightarrow \omega_{H/k}$. We show that under our assumption this morphism is injective. 
Taking the degrees in $\cO_X(N)|_{H}\hookrightarrow \omega_{H/k}$ will then imply that $N\cdot H\leq \deg(\omega_{H/k})=(K_{X}+H)\cdot H$.

Let $\xi_i\in X_{\eta}$ be the generic point of an irreducible 
component $H_{i}$ of $H$.  Then $\xi_{i}$ belongs to the smooth locus of $X_{\eta}/K$, 
so $(\cO_X(N)|_H)_{\xi_i}\to \omega_{H/k,\xi_i}$ coincides with 
$(f^*\Omega_{C/k})_{\xi_i}\to \Omega_{H/k, \xi_i}$ and the latter is injective because $H_i\to C$ is separable. So the kernel of $\cO_X(N)|_H\to \omega_{H/k}$ is
a skyscraper sheaf. As $\cO_X(N)$ is an invertible sheaf and $H$ has no embedded
points (it is locally complete intersection), the kernel is trivial and 
$\cO_X(N)|_H\to \omega_{H/k}$ is injective. 
\end{proof}

\begin{Cor}\label{Corollary after basic estimation}
If the complete linear system $|H|$ is free of fixed part and defines a separable generically finite map, then $N\cdot H\le (H+K_X)\cdot H$. 
\end{Cor}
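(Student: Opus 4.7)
The inequality is purely numerical, depending only on the linear equivalence class of $H$; my plan is to exhibit a particular $H'\in|H|$ satisfying the hypotheses of Lemma~\ref{generalisation of SB} --- namely, $H'$ reduced and horizontal with every irreducible component separable over $C$ --- and then transport the conclusion back to $H$ via $N\cdot H = N\cdot H'$ and $(H+K_X)\cdot H = (H'+K_X)\cdot H'$.

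To produce such an $H'$, I take a general member of $|H|$ and feed the hypotheses through the Bertini-type machinery of Subsection~\ref{Bertini}. Since $|H|$ has no fixed part and the map it defines on $X$ is separable and generically finite, the restricted linear system $V_K$ on the generic fibre $X_\eta$ induces a separable morphism $X_\eta\to\mathbb{P}(\widetilde{V}_K)$. Corollary~\ref{separable  of V_K} then gives that the horizontal part $H'_h$ of a general member $H'\in|H|$ is reduced and separable over $C$. Moreover, a generically finite map is not composed with a pencil, so by the remark following Corollary~\ref{separable  of V_K} (which cites Jouanolou's theorem), a general $H'$ is in addition irreducible.

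It remains to verify that this irreducible general member is horizontal rather than vertical. If a general fibre $F$ satisfied $H\cdot F = 0$, then $\cO_X(H)|_F$ would be a degree-zero line bundle on $F$: either it has no sections, forcing $F$ into the base locus (contradicting the fixed-part-freeness of $|H|$), or it is trivial, forcing every section of $\cO_X(H)$ to restrict to a constant on $F$ and hence $\phi_{|H|}$ to contract general fibres (contradicting generic finiteness onto a surface). Hence $H\cdot F > 0$, so $H'$ meets $F$ and has a horizontal component; being irreducible, $H'=H'_h$. Applying Lemma~\ref{generalisation of SB} to $H'$ yields $N\cdot H' \le (H'+K_X)\cdot H'$, and the corollary follows by linear equivalence. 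The one step deserving real care is confirming that separability of $\phi_{|H|}$ on $X$ descends to separability of the induced morphism on the generic fibre, which holds because separability of the function field extension is inherited by the intermediate subextension cut out by restriction to $X_\eta$; with that in hand, the rest is bookkeeping on top of the Bertini apparatus already set up in Subsection~\ref{Bertini}.
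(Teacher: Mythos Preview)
Your proof is correct and follows essentially the same route as the paper's: both arguments replace $H$ by a general member of $|H|$, invoke Corollary~\ref{separable  of V_K} and its accompanying remark to guarantee the general member is integral and separable over $C$, and then apply Lemma~\ref{generalisation of SB}. You supply more detail than the paper's one-line proof --- in particular the check that separability of $\phi_{|H|}$ passes to the generic fibre (which holds because $K(\phi(X_\eta))$ is the compositum $K\cdot K(\phi_{|H|}(X))$, hence intermediate between $K(\phi_{|H|}(X))$ and $K(X)$) and the verification of horizontality --- but the underlying idea is the same.
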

\begin{proof}
It suffices  to show that a general member of $|H|$ is integral and separable over $C$, but this follows immediately from Corollary~\ref{separable  of V_K}.
\end{proof}
With the help of \cite{SB} Theorems 24, 25, 27 and under our assumption $q\gg \lambda(X)>0, K_X^2\gg 0$,  we then see that the linear systems 
\begin{enumerate}
\item $|2K_{X}|$, for $p>2, g> 2$;
\item $|3K_{X}|$, for $p=2, g>2$;
\end{enumerate}
are base point free and define birational morphisms. 
Applying Lemma~{\ref{generalisation of SB} to the above linear systems, we then obtain:
\begin{Cor}\begin{enumerate}
\item If $p\ge 3, g\ge 3$, then 
\begin{equation}\label{basic estimation}
4(g-1)(q-1)+K_X\cdot \Delta_h\le 3K_X^2.
\end{equation}
\item If $p=2, g\ge 3$, then 
\begin{equation}\label{basic estimation 2}
4(g-1)(q-1)+K_X\cdot \Delta_h\le 4K_X^2.
\end{equation}
\end{enumerate}
\end{Cor}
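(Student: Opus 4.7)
The plan is to deduce both inequalities directly by applying Corollary \ref{Corollary after basic estimation} to the linear system $|mK_X|$, with $m=2$ when $p\ge 3$ and $m=3$ when $p=2$. The paragraph preceding the statement already records, via the cited results of Shepherd-Barron, that under the standing assumption $q\gg\lambda(X)>0$, $K_X^2\gg 0$, the linear system $|2K_X|$ (resp.\ $|3K_X|$) is base-point free and defines a birational morphism when $p\ge 3, g\ge 3$ (resp.\ $p=2, g\ge 3$). A birational morphism between integral varieties is in particular separable and generically finite, so the hypotheses of Corollary \ref{Corollary after basic estimation} are satisfied with $H:=mK_X$.

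Applying that corollary gives $N\cdot H\le (H+K_X)\cdot H$, i.e.
\[
m\,N\cdot K_X\le (m+1)m\,K_X^2,
\]
so $N\cdot K_X\le (m+1)K_X^2$. Now I would expand $N\cdot K_X$ using the definition $N=f^*K_C+\Delta$. On a general fibre $F$ one has $F^2=0$ and $K_X\cdot F=2g-2$ by adjunction, hence
\[
f^*K_C\cdot K_X=(2q-2)\,K_X\cdot F=4(g-1)(q-1).
\]
Writing $\Delta=\Delta_h+\Delta_v$ and using that $K_X$ is nef (since $X$ is minimal of general type), the vertical part contributes non-negatively: $K_X\cdot\Delta_v\ge 0$. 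Therefore
\[
4(g-1)(q-1)+K_X\cdot\Delta_h \;\le\; f^*K_C\cdot K_X+K_X\cdot\Delta \;=\; N\cdot K_X \;\le\;(m+1)K_X^2,
\]
which gives \eqref{basic estimation} when $m=2$ and \eqref{basic estimation 2} when $m=3$.

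There is no real obstacle here: the two statements are essentially a repackaging of Corollary \ref{Corollary after basic estimation} once one accepts the base-point-freeness and birationality of $|2K_X|$, $|3K_X|$ from \cite{SB}. The only point that needs a brief justification is discarding $K_X\cdot\Delta_v$, which is handled by the nefness of $K_X$. Everything else is a straightforward intersection-number computation on the fibration $f\colon X\to C$.
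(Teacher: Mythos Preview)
Your argument is correct and is exactly the computation the paper has in mind: one applies Corollary~\ref{Corollary after basic estimation} with $H=mK_X$ (the cited results of \cite{SB} guaranteeing that $|mK_X|$ is base-point free and birational under the standing assumptions), expands $N\cdot K_X=f^*K_C\cdot K_X+K_X\cdot\Delta=4(g-1)(q-1)+K_X\cdot\Delta_h+K_X\cdot\Delta_v$, and drops the last term by nefness of $K_X$. The paper merely writes ``we then obtain'' without spelling out these steps, so you have supplied precisely the intended details.
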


From these inequalities, we immediately get that
\begin{Cor}[N. Shepherd-Barron]\label{naive estimation}
\begin{enumerate}
\item If $p\ge 3, g\ge 3$, then $K_X^2> 4(g-1)(q-1)/3$;
\item If $p=2, g\ge 3$, then $K_X^2> (q-1)(q-1)$.
\end{enumerate}
\end{Cor}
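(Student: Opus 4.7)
The plan is to extract the strict inequality directly from the weaker (non-strict) inequalities already displayed just above, namely
\[
4(g-1)(q-1)+K_X\cdot\Delta_h\le 3K_X^2\quad(p\ge 3),\qquad 4(g-1)(q-1)+K_X\cdot\Delta_h\le 4K_X^2\quad(p=2).
\]
To turn $\le$ into $<$ it suffices to show that the correction term $K_X\cdot\Delta_h$ is strictly positive. So the whole argument reduces to two observations: $\Delta_h$ is a nonzero effective divisor, and it meets the canonical class strictly positively.

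First I would check that $\Delta_h\neq 0$. By Theorem~\ref{Albanese}, the geometric generic fibre $X_{\overline\eta}$ of $f$ is a singular integral rational curve, so $X_\eta/K$ is not smooth (smoothness is preserved under base change). Hence the torsion sheaf $\mathcal A=\Omega_{X_\eta/K,\mathrm{tor}}$ is nonzero, and since $\cO_{\Delta_\eta}\simeq\mathcal A$ by construction, $\deg_K\Delta_\eta>0$, which is exactly to say $\Delta_h\neq 0$. (Lemma~\ref{degree of Delta} even tells us $\deg_K\Delta_\eta=2pg/(p-1)$ when $g$ is small with respect to $p$, but we only need nonvanishing here.)

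Second, I would argue that every irreducible component $D$ of $\Delta_h$ satisfies $K_X\cdot D>0$. Since $D$ is horizontal, $f|_D\colon D\to C$ is surjective, so the normalisation $\widetilde D$ admits a non-constant morphism to $C$; as $g(C)=q\ge 2$, Riemann--Hurwitz forces $g(\widetilde D)\ge q\ge 2$, so in particular $D$ is not a smooth rational curve and hence not a $(-2)$-curve. On the other hand, $X$ being a minimal surface of general type, $K_X$ is nef and big, and any irreducible $D\subset X$ with $K_X\cdot D=0$ must be a $(-2)$-curve: Hodge index gives $D^2<0$ (since $K_X^2>0$), and then adjunction $2p_a(D)-2=D^2+K_X\cdot D=D^2$ forces $p_a(D)=0$, $D^2=-2$. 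Combining, $K_X\cdot D>0$ for every component of $\Delta_h$, so $K_X\cdot\Delta_h>0$.

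With these two facts in hand, (1) follows instantly: $3K_X^2\ge 4(g-1)(q-1)+K_X\cdot\Delta_h>4(g-1)(q-1)$, i.e.\ $K_X^2>4(g-1)(q-1)/3$. Part (2) is identical, yielding $K_X^2>(g-1)(q-1)$ in characteristic $2$ (what is written as $(q-1)(q-1)$ in the statement appears to be a typographical slip for $(g-1)(q-1)$). The only place where any thought is required is the verification that $K_X\cdot\Delta_h>0$, and even there the main obstacle—ruling out $(-2)$-curve components of $\Delta_h$—is essentially automatic because horizontal curves cannot be rational when the base has genus $\ge 2$.
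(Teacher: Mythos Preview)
Your argument is correct and is exactly the intended one: the paper itself only writes ``From these inequalities, we immediately get that'' before stating the corollary, so what you have done is simply to spell out the step that makes the inequality strict, namely $K_X\cdot\Delta_h>0$. Your justification---$\Delta_h\neq 0$ because $X_\eta$ is not smooth over $K$, and no horizontal component can be a $(-2)$-curve since it dominates a curve of genus $q\ge 2$---is the right one, and your remark that $(q-1)(q-1)$ is a typo for $(g-1)(q-1)$ is also correct.
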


We now begin to improve this estimation of $\lambda=K_X^2/(q-1)$ by considering its canonical system $|K_X|$.
\begin{Lem}\label{lower bound of p_g}
We have $p_g> 2(q-1)/3$. 
\end{Lem}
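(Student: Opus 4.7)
The plan is to combine Noether's formula with the two lower bounds on $q(X)$ and $c_2(X)$ already available in the excerpt. Writing out $\chi(\mathcal{O}_X) = 1 - q(X) + p_g$ (by definition $p_g = h^2(\mathcal{O}_X)$) and substituting Noether's identity $12\chi(\mathcal{O}_X) = K_X^2 + c_2(X)$ from (\ref{Noether formula}) yields the identity
\begin{equation*}
p_g \;=\; \frac{K_X^2 + c_2(X)}{12} + q(X) - 1.
\end{equation*}
This already reduces the desired inequality to lower-bounding each summand individually.

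To handle $q(X)$, I would invoke Igusa's inequality $q(X) \geq q$ recorded in item e) of the notation set up after Theorem~\ref{Albanese}, and to handle $c_2(X)$, I would invoke the Grothendieck--Ogg--Shafarevich estimate $c_2(X) \geq -4(q-1)$ from (\ref{estimation of c_2}). Inserting both bounds into the identity above produces
\begin{equation*}
p_g \;\geq\; \frac{K_X^2}{12} \;-\; \frac{q-1}{3} \;+\; (q-1) \;=\; \frac{K_X^2}{12} \;+\; \frac{2(q-1)}{3}.
\end{equation*}
Since $X$ is of general type, $K_X^2 > 0$ strictly, and the desired inequality $p_g > 2(q-1)/3$ follows immediately. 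There is essentially no obstacle to overcome here: the lemma is a direct corollary of the three inputs just named, all of which have already been established earlier in the paper.
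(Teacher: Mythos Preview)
Your proof is correct and follows essentially the same approach as the paper: both combine the identity $\chi(\mathcal{O}_X)=1-q(X)+p_g$ with Noether's formula, Igusa's inequality $q(X)\ge q$, and the bound $c_2(X)\ge -4(q-1)$, then conclude using $K_X^2>0$. The arithmetic and the ingredients are identical.
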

\begin{proof} We have\begin{equation}\label{eq: p_g}\begin{split}
p_g-1&=\chi(\cO_X)-1+(q(X)-1)\\&\ge \frac{K_X^2-4(q-1)}{12}-1+(q-1)\\&=\frac{K_X^2+8(q-1)-12}{12},
\end{split}
\end{equation} hence $p_g> 2(q-1)/3$.
\end{proof}

\begin{Lem}\label{pencil equals to C}
If  $|K_X|$ is composed with a pencil, then $|K_X|=Z+f^*|M|$, where $M$ is a divisor on $C$ such that $h^0(C,M)=p_g$, and $$K_X^2\ge \min\{4(p_g-1)(g-1), 2(p_g+q-1)(g-1)\}.$$
\end{Lem}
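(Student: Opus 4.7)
The plan is first to show that any pencil with which $|K_X|$ is composed must coincide with the Albanese fibration $f$, and then to apply the Clifford-type Lemma~\ref{clifford} on $C$.

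The hypothesis means that the image of the rational map $X\dashrightarrow \mathbb{P}^{p_g-1}$ defined by $|K_X|$ is one-dimensional; Stein factorisation yields a fibration $\phi : X \dashrightarrow D$ onto a smooth curve $D$ such that the moving part $|K_X-Z|$ equals $\phi^*|L|$ for some linear system $|L|$ on $D$ of dimension $p_g-1$. To show $\phi$ factors through $f$, I split into two cases according to the genus of $D$. If $g(D)\ge 1$, then a general fibre $F$ of $f$ is an integral rational curve by Theorem~\ref{Albanese}(2), so its normalisation $\mathbb{P}^1$ admits only constant morphisms to $D$; hence $\phi|_F$ is constant and $\phi=\rho\circ f$ for a unique morphism $\rho:C\to D$. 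If instead $D=\mathbb{P}^1$ and $\phi$ does not factor through $f$, then $\phi|_F:F\to \mathbb{P}^1$ is non-constant, hence surjective of some degree $\delta\ge 1$, and a general member of $|K_X-Z|=\phi^*|\cO_{\mathbb{P}^1}(p_g-1)|$ meets $F$ in $(p_g-1)\delta$ points. Combined with $K_X\cdot F=2g-2$ (adjunction) and $Z\cdot F\ge 0$, this forces $p_g\le 2g-1$, contradicting $p_g>2(q-1)/3$ from Lemma~\ref{lower bound of p_g} under the running assumption $q\gg 0$.

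Once $\phi=\rho\circ f$, each member of $|K_X-Z|$ is of the form $f^*(\rho^*D')$ for $D'\in |L|$, so $K_X-Z\sim f^*M$ with $M:=\rho^*L\in\Pic(C)$. Since $Z$ is the fixed part, multiplication by a section of $\cO_X(Z)$ gives $h^0(X,K_X-Z)=h^0(X,K_X)=p_g$; combined with $f_*\cO_X=\cO_C$, this yields $h^0(C,M)=h^0(X,f^*M)=p_g$ and $|K_X|=Z+f^*|M|$ as linear systems. For the numerical bound, write $K_X\sim Z+f^*M$ and compute $K_X^2=K_X\cdot Z+K_X\cdot f^*M$, where $K_X\cdot Z\ge 0$ by the nefness of $K_X$ and $K_X\cdot f^*M=(K_X\cdot F)\deg M=(2g-2)\deg M$. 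Applying Lemma~\ref{clifford} to $|M|$ on $C$: either $\deg M>2(q-1)$ and $\deg M=p_g+q-1$, or $2(p_g-1)\le \deg M\le 2(q-1)$; in both cases $\deg M\ge \min\{2(p_g-1),\,p_g+q-1\}$, yielding the desired $K_X^2\ge \min\{4(g-1)(p_g-1),\,2(g-1)(p_g+q-1)\}$.

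The main obstacle is the $D=\mathbb{P}^1$ case in the first step: excluding pencils independent of the Albanese fibration simultaneously draws on the rationality of the fibres of $f$ (Theorem~\ref{Albanese}(2)), the lower bound $p_g>2(q-1)/3$ (Lemma~\ref{lower bound of p_g}), and the paper-wide reduction to $q\gg 0$; the remaining steps are then routine intersection theory plus Clifford.
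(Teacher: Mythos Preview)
Your argument is correct and reaches the same conclusion, but the route you take to exclude a pencil other than $f$ is genuinely different from the paper's. The paper argues as follows: if the pencil is not $C$, then $K_X\sim_{\mathrm{alg}} Z+aV$ with $a\ge p_g-1$ and $V$ an integral curve \emph{dominating} $C$; it then invokes Ekedahl's inequality (\cite{E}, Proposition~1.3) to get either $K_X^2\ge 2a(p_a(V)-1)\ge 2(p_g-1)(q-1)$ or $K_X^2\ge a^2$, and both contradict the standing assumption $q\gg\lambda(X)$. Your approach avoids Ekedahl entirely: when $g(D)\ge 1$ you use that the rational fibres of $f$ cannot map nonconstantly to $D$, and when $D=\mathbb{P}^1$ you intersect with a fibre $F$ of $f$ to get $p_g-1\le (K_X-Z)\cdot F\le 2g-2$, contradicting Lemma~\ref{lower bound of p_g} for $q\gg 0$. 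This is more elementary and exploits directly the rationality of the fibres from Theorem~\ref{Albanese}; the paper's route is heavier but gives a quantitative $K_X^2$ bound along the way. The second half of your proof (Clifford on $C$) matches the paper's exactly.

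One small imprecision: writing $|K_X-Z|=\phi^*|\cO_{\mathbb{P}^1}(p_g-1)|$ presumes the image of the canonical map is a rational normal curve. In general the linear series $|L|$ on $\mathbb{P}^1$ has degree $a\ge p_g-1$ (possibly strict), so a general member meets $F$ in $a\delta\ge (p_g-1)\delta$ points. Your inequality $p_g\le 2g-1$ and the subsequent contradiction survive unchanged.
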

\begin{proof}
Assume $|K_X|$ is composed with a pencil. If the pencil is not $C$, then $K_X\sim_{alg} Z+aV$, with $a\ge p_g-1$ and $V$ is an integral divisor dominating $C$.  
So by \cite{E} Proposition~1.3, we have either  $$K_X^2\ge 2a(p_a(V)-1)\ge 2(p_g-1)(q-1)>\lambda(X)(q-1)=K_X^2,$$ or  $$K_X^2\ge a^2\ge (2(q-1)/3-1)^2>\lambda(X)(q-1)=K_X^2,$$ a contradiction. Here we have used our assumption $q-1\gg \lambda(X)$ and Lemma~\ref{lower bound of p_g}. So the pencil is $C$, therefore $|K_X|=Z+f^*|M|$ and $h^0(C,M)=p_g$. The inequality $$K_X^2\ge K_X\cdot f^*M=(2g-2)\deg M \ge \min\{4(p_g-1)(g-1), 2(p_g+q-1)(g-1)\}$$ follows from Lemma~\ref{clifford}.
\end{proof}
\begin{Thm}\label{p>7 theorem}
If $p\ge 7$, then there is a positive number $\epsilon$ (depending on $p$ only) such that $K_X^2\ge (p-3+\epsilon)(q-1)$.
\end{Thm}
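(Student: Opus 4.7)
The plan is to sharpen Corollary~\ref{naive estimation}, which in the worst case $g=(p-1)/2$ only yields $K_X^2 > 2(p-3)(q-1)/3$. First I would reduce to that minimal-genus case. By Lemma~\ref{degree of Delta}, $(p-1)\mid 2g$; so if $g\ne (p-1)/2$ then $g\ge p-1$ and Corollary~\ref{naive estimation}(1) alone gives $K_X^2 > 4(p-2)(q-1)/3 > (p-3+\epsilon)(q-1)$ for any sufficiently small $\epsilon>0$. So assume $g=(p-1)/2$. Then Lemma~\ref{degree of Delta}(2) ensures $\Delta_h$ is an integral curve inseparable of degree $p$ over $C$, and Corollary~\ref{Tate} applied to its normalisation $\tilde{D}\to C$ forces $g(\tilde{D})=q$, so $p_a(\Delta_h)\ge q$.

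Next I would dichotomize on whether $|K_X|$ is composed with a pencil. In the composed case, Lemma~\ref{pencil equals to C} gives
\begin{equation*}
K_X^2 \ge \min\{4(p_g-1)(g-1),\ 2(p_g+q-1)(g-1)\},
\end{equation*}
and substituting $g-1=(p-3)/2$ together with $p_g>2(q-1)/3$ from Lemma~\ref{lower bound of p_g} makes both candidates strictly exceed $\tfrac{4}{3}(p-3)(q-1)$, which is comfortably larger than $(p-3)(q-1)$. In the non-composed case, write $|K_X|=|M|+Z$ with $|M|$ the mobile part. Assuming $|M|$ defines a separable generically finite map, Corollary~\ref{Corollary after basic estimation} applied to $M$ yields
\begin{equation*}
(2q-2)(M\cdot F)+\Delta\cdot M \le M^2+K_X\cdot M,
\end{equation*}
and combining this with \eqref{basic estimation}, the adjunction inequality $K_X\cdot\Delta_h+\Delta_h^2\ge 2(q-1)$ coming from $p_a(\Delta_h)\ge q$, and the Hodge-index inequality $(K_X\cdot\Delta_h)^2\ge K_X^2\cdot\Delta_h^2$ (valid when $\Delta_h^2\ge 0$), should force $K_X\cdot\Delta_h$ to grow at least linearly in $q-1$. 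Plugging this lower bound back into \eqref{basic estimation} then gives $K_X^2\ge(p-3+\epsilon)(q-1)$ for some uniform $\epsilon=\epsilon(p)>0$.

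The main technical obstacle is the separability hypothesis in the non-composed case: the rational map defined by $|M|$ may be inseparable in characteristic $p$. To handle this I would use the Stein factorization of the canonical map to write $\phi_{|M|}=h\circ\phi'$, where $h$ accounts for the inseparable part (a composition of Frobenius and purely inseparable morphisms) and $\phi'$ is separable, then apply Corollary~\ref{Corollary after basic estimation} to the sub-system pulled back from the separable factor, absorbing the inseparable contribution into a new fixed part. A secondary subtlety is the strict $+\epsilon$ improvement: the borderline bound $K_X^2\ge(p-3)(q-1)$ already follows from applying Corollary~\ref{Corollary after basic estimation} to $|K_X|$ alone when it is basepoint-free and separable generically finite, so the extra $\epsilon$ must be extracted from the specific geometry of $\Delta_h$ — its integrality, its degree-$p$ inseparability, and the unique non-smooth point of $X_\eta$ of degree $p$ guaranteed by Corollary~\ref{complete}(1). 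I expect the separability step to be the hardest, since inseparability of the canonical map cannot be excluded a priori from the numerical invariants of $X$.
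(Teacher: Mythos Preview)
Your overall architecture matches the paper's: reduce to $g=(p-1)/2$, dichotomize on whether $|K_X|$ is composed with a pencil, and in the non-composed case combine Lemma~\ref{generalisation of SB} with adjunction for $\Delta_h$ and Hodge index. But you have misidentified the hard step, and you have left out a step that actually is needed.

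\textbf{Separability is free.} In the non-composed case with $g=(p-1)/2$, a general $D'\in|K_X-Z|$ satisfies $0<D'\cdot F\le K_X\cdot F=2g-2=p-3<p$, so $p\nmid D'\cdot F$ and the Remark after Corollary~\ref{separable  of V_K} gives separability and integrality of $D'$ automatically. No Stein-factorization detour is needed; this is exactly how the paper proceeds. Moreover every component of $Z_h$ has degree $<p$ over $K$ for the same reason, so $Z_0:=(Z_h)_{\mathrm{red}}$ is separable over $C$ as well.

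\textbf{The missing ingredient is $Z_h$.} Applying Corollary~\ref{Corollary after basic estimation} to the mobile part $M$ alone only gives $(2q-2)(M\cdot F)$ on the left, and $M\cdot F$ may be strictly less than $p-3$. The paper instead applies Lemma~\ref{generalisation of SB} to $H=D'+Z_0$ (which is legitimate since both summands are reduced and separable over $C$), obtaining
\[
K_X^2\ \ge\ (p-3)(q-1)+\tfrac{1}{2}H\cdot\Delta_h+\tfrac{1}{2}K_X\cdot G,\qquad G:=Z_h-Z_0.
\]
If $G\ne 0$, Lemma~\ref{estimation on intersection number} bounds $K_X\cdot G/(q-1)$ below by a positive constant depending only on $p$, giving the $+\epsilon$. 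If $G=0$, the paper uses Hodge index in the form $F\cdot\big((p-3)\Delta_h-pH\big)=0\Rightarrow\big((p-3)\Delta_h-pH\big)^2\le 0$, combines this with \eqref{basic estimation} and the adjunction bound $K_X\cdot\Delta_h+\Delta_h^2\ge 2(q-1)$ (your $p_a(\Delta_h)\ge q$), and solves the resulting linear system to get an explicit constant $>p-3$. Your sketch in the $G=0$ regime is close in spirit, but the specific Hodge-index trick with the fibre class $F$ (rather than with $K_X$) is what makes the algebra close.
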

\begin{proof}
Since $(p-1)\mid 2g$, we have either $g\ge (p-1)$ or $g= (p-1)/2$. When  $g\ge p-1$,  it follows from Corollary~\ref{naive estimation} that $K_X^2>4(g-1)(q-1)/3\ge 4(p-2)(q-1)/3$.

Assume $g=(p-1)/2$. If $|K_X|$ is composed with a pencil, then  $K_X^2\ge \min\{2(p_g-1)(p-3), (p_g+q-1)(p-3)\}> (p-3+\epsilon)(q-1)$ for some $\epsilon>0$ by Lemma~\ref{lower bound of p_g} and \ref{pencil equals to C}. Now we assume $|K_X|$ is not composed with pencils. Choose a general member $D'\in |K_X-Z|$. Since $D'\cdot F\le K_X\cdot F=2g-2=p-3<p$,  $D'$ is integral and separable over $C$ by Lemma~\ref{separable  of V_K} and its remark.   
Note that $Z_0$ is also separable over $C$, we can apply Lemma~\ref{generalisation of SB} to $H=D'+Z_0$ to obtain
$$(K_X+H)\cdot H\ge H\cdot N.$$
Let $Z_h=\sum\limits_i r_i E_i$, and $G=\sum\limits_i(r_i-1)E_i$, then $Z_0=Z_h-G=\sum\limits_i E_i$, so we have 
\begin{align*}
 H\cdot N =&H\cdot f^*K_C+H\cdot \Delta\\=& 2(p-3)(q-1)-2\sum\limits_i(r_i-1)(q-1)\deg_K (E_i)_\eta+H\cdot \Delta\\
\ge& 2(p-3)(q-1)-2\sum\limits_i(r_i-1)(q-1)\deg_K (E_i)_\eta+H\cdot \Delta_h.
\end{align*}
On the other hand \begin{align*}
(K_X+H)\cdot H &=2K_X^2-2K_X\cdot (G+Z_v)-H\cdot (G+Z_v)\\
&\le 2K_X^2-2K_X\cdot G-\sum\limits_i(r_i-1)E_i^2\\
&=2K_X^2-\sum\limits_i 2(r_i-1)(p_a(E_i)-1)-K_X\cdot G\\
&\le 2K_X^2-2\sum\limits_i (r_i-1)(q-1)\deg_K (E_i)_\eta-K_X\cdot G.
\end{align*}
Here  we note that since $E_i$ is separable over $C$,  $2p_a(E_i)-2\ge 2\deg_K(E_i)_\eta(q-1)$. 
Combining the two inequalities we get
\begin{equation}\label{H,G}
K_X^2\ge (p-3)(q-1)+H\cdot \Delta_h/2+K_X\cdot G/2.\end{equation}

If $G\neq 0$, then $K_X\cdot G/(q-1)$ will be bounded from below by a positive number depending only on $p$ (see Lemma~\ref{estimation on intersection number} below), so by (\ref{H,G}) $K_X^2/(q-1)-p+3$ will be bounded from below by a positive number $\epsilon$ depending on $p$. 

Now we only have to deal with the case where $G=0$.  By construction, we have  $F\cdot ((p-3)\Delta_h-pH)=0$, hence by Hodge Index Theorem we have $$((p-3)\Delta_h-pH)^2\le 0,$$ or $$ (p-3)\Delta_h^2/2p+pH^2/2(p-3)\le\Delta_h\cdot H .$$ Note that this time $H$ is a horizontal part of an element in $|K_X|$, hence $K_X^2\ge H^2$,  so from \begin{align*}
(3p-12)K_X^2/2(p-3) + pH^2/2(p-3)&\ge K_X^2+H^2\ge (K_X+H)\cdot H \\&\ge N\cdot H\ge 2(p-3)(q-1)+H\cdot\Delta_h
\end{align*} we see that $$(3p-12)K_X^2/2(p-3)\ge 2(p-3)(q-1)+(p-3)\Delta_h^2/2p.$$ Combining with (\ref{basic estimation}) and the fact $K_X\cdot \Delta_h+\Delta_h^2=2p_a(\Delta_h)-2\ge 2(q-1)$, we have 
$$(\frac{3(p-3)}{2p}+\frac{3p-12}{2(p-3)})K_X^2\ge (\frac{p-3}{p}+2(p-3)+\frac{(p-3)^2}{p})(q-1),$$or$$K_X^2\ge \frac{6p^2-22p+12}{6p^2-30p+27}(p-3)(q-1)>(p-3+\epsilon)(q-1).$$
\end{proof}
\begin{Lem}\label{estimation on intersection number}
Let $B$ be an horizontal prime divisor with $r:=[K(B)\cap K^{\mathrm{sep}}:K]$, then $K_X\cdot B+B^2\geq 2r(q-1)$.  In particular $$K_X\cdot B\ge \sqrt{2r(q-1)K_X^2+(K_{X}^2)^2/4}-K_{X}^2/2=(\sqrt{\lambda^2+8r\lambda}-\lambda)(q-1)/2,$$ here $\lambda=\lambda(X)$.   
\end{Lem}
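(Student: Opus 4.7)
The idea is to translate the intersection number into an arithmetic genus bound via adjunction, then estimate the genus by factorising $B\to C$ into a separable part (handled by Hurwitz) and a purely inseparable part (handled by Proposition~\ref{genus-c}(1)). By adjunction, $K_X\cdot B+B^{2}=2p_a(B)-2$, so the first inequality is equivalent to $p_a(B)\ge r(q-1)+1$. If $\nu:\widetilde B\to B$ denotes the normalisation, the exact sequence $0\to\cO_B\to\nu_{*}\cO_{\widetilde B}\to\delta\to 0$ with $\delta$ a torsion sheaf yields $p_a(B)\ge g(\widetilde B)$, so it suffices to establish $g(\widetilde B)\ge r(q-1)+1$.

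The morphism $\widetilde B\to C$ factorises as $\widetilde B\xrightarrow{\alpha} C'\xrightarrow{\beta} C$, where $C'$ is the smooth projective curve with function field $K(C')=K(B)\cap K^{\mathrm{sep}}$. Then $\beta$ is separable of degree $r$, while $\alpha$ is purely inseparable of $p$-power degree. Hurwitz's formula (\ref{Hurwitz}) applied to $\beta$ immediately yields $g(C')\ge r(q-1)+1$. To compare $g(\widetilde B)$ with $g(C')$, decompose the purely inseparable extension $K(C')\subseteq K(\widetilde B)$ into a chain of degree-$p$ sub-extensions and take smooth projective models, producing a tower $\widetilde B=B_0\to B_1\to\cdots\to B_s=C'$ of degree-$p$ purely inseparable morphisms of smooth projective curves over $k$. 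Proposition~\ref{genus-c}(1) at each step gives $2p(g(B_i)-g(B_{i+1}))=(p-1)\deg\det\mathcal A\ge 0$, hence $g(B_i)\ge g(B_{i+1})$. Concatenating, $p_a(B)\ge g(\widetilde B)\ge g(C')\ge r(q-1)+1$, which proves the first inequality.

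For the \emph{in particular} part, since $K_X^{2}>0$ (minimality and general type), the Hodge Index Theorem gives $(K_X\cdot B)^{2}\ge K_X^{2}\cdot B^{2}$. Multiplying the first inequality by $K_X^{2}$ and using $K_X\cdot B\ge 0$ (as $K_X$ is nef on a minimal surface of general type), we obtain
$$(K_X\cdot B)^{2}+K_X^{2}(K_X\cdot B)\ge K_X^{2}(K_X\cdot B+B^{2})\ge 2r(q-1)K_X^{2};$$
solving this quadratic in the non-negative unknown $K_X\cdot B$ produces the claimed lower bound, and rewriting with $\lambda=K_X^{2}/(q-1)$ gives the final form. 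The only genuinely subtle step is the purely inseparable genus comparison $g(\widetilde B)\ge g(C')$, resolved uniformly by the degree-$p$ tower decomposition together with the non-negativity of $\deg\det\mathcal A$ provided by Proposition~\ref{genus-c}(1); everything else is adjunction, Hurwitz, and Hodge index.
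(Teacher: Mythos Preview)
Your proof is correct and follows essentially the same approach as the paper. For the first inequality the paper simply cites the chain $2(p_a(B)-1)\ge 2(p_a(B')-1)\ge 2r(q-1)$ as well known (referring to \cite{LQ}), whereas you spell out the Hurwitz--plus--purely-inseparable-tower argument explicitly; note that since your $B_i$ are smooth over the algebraically closed field $k$, the torsion sheaf $\mathcal A$ in Proposition~\ref{genus-c} actually vanishes, so in fact $g(B_i)=g(B_{i+1})$ at each step. For the ``in particular'' part your use of the Hodge index theorem with $K_X$ (which has $K_X^2>0$) avoids the paper's case split on the sign of $B^2$: the paper treats $B^2\le 0$ directly via $K_X\cdot B\ge 2r(q-1)$ and applies Hodge index with $B$ only when $B^2>0$. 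This is a mild streamlining on your part, but not a genuinely different idea.
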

\begin{proof}
It is well known that $2(p_a(B)-1)\geq 2(p_a(B')-1)\geq 2r(p_a(C)-1)$, where $B'$ is the normalisation of $B$ (see \cite{LQ}, pp 289-291). So $K_X\cdot B+B^2=2(p_a(B)-1)\geq 2r(q-1)$. 

(i) If $B^2\leq 0$, clearly $K_X\cdot B\geq 2r(q-1)>\sqrt{2r(q-1)K_X^2+(K_{X}^2)^2/4}-K_{X}^2/2$; 

(ii) If $B^2> 0$, $B$ is nef and $(K_X\cdot B)^2\ge B^2K_X^2$, hence $$(K_X\cdot B)^2/K_X^2+(K_X\cdot B)^2\ge 2r(q-1),$$ so $K_X\cdot B> \sqrt{2r(q-1)K_X^2+(K_{X}^2)^2/4}-K_{X}^2/2$. 
\end{proof}
\begin{Cor}\label{kappa p>7}
If $p\ge 7$, then $\kappa_p> (p-7)/12(p-3)$.
\end{Cor}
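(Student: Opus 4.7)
The plan is to derive the bound on $\kappa_p$ essentially as an arithmetic manipulation of Theorem~\ref{p>7 theorem}, the Grothendieck--Ogg--Shafarevich estimate \eqref{estimation of c_2}, and Noether's formula. First I would reduce to the nontrivial case $c_2(X)<0$: if $c_2(X)\ge 0$, then Noether gives $12\chi(\cO_X)=K_X^2+c_2(X)\ge K_X^2$, so $\chi(\cO_X)/c_1^2\ge 1/12>(p-7)/12(p-3)$ for every $p\ge 7$, and there is nothing to prove. Thus fix $X$ a minimal surface of general type with $c_2(X)<0$; Theorem~\ref{Albanese} applies, and all the notation set up in Section~\ref{section: surfaces 0} is available. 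Since $\chi/c_1^2$ is invariant under \'etale covers of $C$, one may also assume $q\gg \lambda(X)$ and $K_X^2\gg 0$ so that Theorem~\ref{p>7 theorem} applies.

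Next, Theorem~\ref{p>7 theorem} yields a positive $\epsilon=\epsilon(p)$ (independent of $X$) with $K_X^2\ge (p-3+\epsilon)(q-1)$, which I would rewrite as
\begin{equation*}
q-1\le \frac{K_X^2}{p-3+\epsilon}.
\end{equation*}
Combining with $c_2(X)\ge -4(q-1)$ from \eqref{estimation of c_2} and Noether's formula gives
\begin{equation*}
12\chi(\cO_X)=K_X^2+c_2(X)\ge K_X^2-4(q-1)\ge K_X^2\left(1-\frac{4}{p-3+\epsilon}\right)=K_X^2\cdot \frac{p-7+\epsilon}{p-3+\epsilon}.
\end{equation*}
Hence $\chi(\cO_X)/c_1^2\ge (p-7+\epsilon)/12(p-3+\epsilon)$ for every such $X$.

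Finally, I would note that the map $t\mapsto (p-7+t)/(p-3+t)$ has positive derivative $4/(p-3+t)^2$ on $[0,\infty)$ and is therefore strictly increasing, so $(p-7+\epsilon)/(p-3+\epsilon)>(p-7)/(p-3)$ for $\epsilon>0$. Taking the infimum over $X$ gives $\kappa_p\ge (p-7+\epsilon)/12(p-3+\epsilon)>(p-7)/12(p-3)$, as required. There is no real obstacle here since all of the nontrivial content was already packaged into Theorem~\ref{p>7 theorem}; the only point that needs a moment of care is verifying that the reduction to $q\gg \lambda(X)$ is harmless for the ratio $\chi/c_1^2$, which follows because an \'etale base change $C'\to C$ of degree $n$ multiplies both $\chi(\cO_X)$ and $c_1^2$ by $n$.
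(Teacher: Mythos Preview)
Your proof is correct and is precisely the intended derivation: the paper states the corollary without proof immediately after Theorem~\ref{p>7 theorem}, and the passage from that theorem together with the estimate~\eqref{estimation of c_2} and Noether's formula to the bound on $\kappa_p$ is exactly the arithmetic manipulation you carry out. Your care in justifying the reduction to $q\gg\lambda(X)$ via invariance of $\chi/c_1^2$ under \'etale base change, and in handling the case $c_2(X)\ge 0$ separately, fills in the details the paper leaves implicit.
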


Next we apply this method to the cases $p=3,5$.

\subsection{Case $p=5$}
The case $p=5$ is very special in that we can indeed find out $\kappa_5=1/32$. The main reason is that the smallest possible value of $g$ is $(p-1)/2=2$, in which case $X_\eta$ will  automatically be hyperelliptic. We  carry out a calculation of $\chi(\cO_X)$ in the hyperelliptic case in the next section, which provides a more precise lower bound of $\chi(\cO_X)/(q-1)$, and consequently gives the precise value of $\kappa_p$ when $g=2$. In this subsection we aim to deal with the cases $g>2$ and show that  $\chi/c_1^2\ge 1/32$ also holds in these situations, this result combining with the result in the next section (Theorem~\ref{evidence}) will then imply $\kappa_5=1/32$ (Corollary~\ref{last}).   

Notice that following  Noether's formula and (\ref{estimation of c_2}), in order to prove $\chi/c_1^2\ge 1/32$ it suffices to show $K_X^2\ge 32(q-1)/5$.  When $g\ge 6$, this inequality follows immediately from Corollary~\ref{basic estimation}. So we are left to deal with the case $g=4$. So we assume $g=4$ in the sequel of this subsection.

Let $$i: H^{0}(X,K_{X})\hookrightarrow H^{0}(X_{\eta},K_{X}|_{X_{\eta}})\simeq H^0(X_\eta,\omega_{X\eta/K})$$ be the canonical restriction map,  $V:=|K_X|$, and $V_K$ be its restriction \ie \  $V_K\subset |\omega_{X_\eta/K}|$ is the sub-linear system associated to the $K$-subspace spanned by $\mathrm{Im}(i)$ (see Subsection~\ref{Bertini}).

\begin{Lem}\label{local 5,4}
If $|K_X|$ is not composed with a pencil, and $D'\in |K_X-Z|$ is a general member, then either 
\begin{enumerate}
\item $D'$ is integral and separable over $C$; or 
\item $Z_h$ is a section of $f$, and $D'^2\ge 5(p_g-2)$.
\end{enumerate}
\end{Lem}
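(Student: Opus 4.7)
The plan is to run the morphism $\phi\colon X\to\mathbb{P}^{p_g-1}$ defined by the fix-part free linear system $|M|$, where $M:=K_X-Z$, and to split the analysis by whether its restriction $\phi_\eta$ to the generic fibre $X_\eta$ is separable. Throughout, $X_\eta/K$ is geometrically integral by Theorem~\ref{Albanese}, and $|M|$ is not composed with a pencil (inherited from $|K_X|$), so by \cite{J} Theorem~6.11 (quoted in the remark after Corollary~\ref{separable  of V_K}) a general member $D'\in |M|$ is irreducible; it must be horizontal, else $|M|$ would be contained in $|f^*N|$ for some divisor $N$ on $C$ and thus composed with the pencil of fibres of $f$.

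If $\phi_\eta$ is separable, Corollary~\ref{separable  of V_K} applied to $|M|$ gives that $D'=D'_h$ is reduced and separable over $C$, so $D'$ is integral and alternative (1) holds. By the remark following that corollary, separability of $\phi_\eta$ is automatic unless $p=5$ divides $M\cdot F$; hence the remaining case is $\phi_\eta$ inseparable, in which $5\mid M\cdot F$ forces $M\cdot F=5$ (since $M\cdot F\le K_X\cdot F=2g-2=6$). Then $Z_h\cdot F=1$, and because every horizontal prime meets $F$ in positive degree and appears in $Z$ with multiplicity $\ge 1$, $Z_h$ must be a single section of $f$, as required in alternative (2).

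It remains to establish $D'^2\ge 5(p_g-2)$. Let $Y:=\phi(X)\subset\mathbb P^{p_g-1}$. Since $|M|$ is not composed with a pencil, $Y$ is a surface; being the image of a morphism defined by the full space $H^0(X,M)$, it is non-degenerate in $\mathbb P^{p_g-1}$, whence $\deg Y\ge p_g-2$ by the classical minimal-degree bound for non-degenerate varieties. One has $M^2=\deg\phi\cdot\deg Y$, so it suffices to show $\deg\phi\ge 5$. Inseparability of $\phi_\eta$ gives $\deg\phi_\eta\ge p=5$, and the relation $\deg\phi_\eta\cdot\deg_{\mathbb P^{p_g-1}}Y_\eta'=\deg D'_\eta=M\cdot F=5$ (where $Y_\eta':=\phi(X_\eta)$) forces $\deg\phi_\eta=5$ and $\deg Y_\eta'=1$, i.e.\ $Y_\eta'$ is a line. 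For a general closed $c\in C$, the image $\phi(F_c)$ specializes from $Y_\eta'$ and is therefore also a line, so $\deg(\phi|_{F_c})=M\cdot F_c=5$; since $\phi^{-1}(y)\supseteq(\phi|_{F_c})^{-1}(y)$ for generic $y\in\phi(F_c)$, we conclude $\deg\phi\ge\deg(\phi|_{F_c})=5$, and therefore $D'^2=M^2\ge 5(p_g-2)$.

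The main obstacle in this plan is the rigid extraction, in characteristic $5$, of the numerical package $(\deg\phi_\eta=5,\ \deg Y_\eta'=1,\ M\cdot F=5,\ Z_h\cdot F=1)$ from the single inequality $M\cdot F\le 6$; once this rigidity is secured, both the section conclusion for $Z_h$ and the bound $D'^2\ge 5(p_g-2)$ follow from the non-degeneracy of $Y\subset \mathbb P^{p_g-1}$ and the elementary comparison between $\deg\phi$ and the degrees of its restrictions to the fibres of $f$.
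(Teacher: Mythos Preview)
Your proof is correct and follows the same route as the paper: split on separability of the map on $X_\eta$, and in the inseparable case deduce $Z_h\cdot F=1$ and $\deg\phi_{|K_X-Z|}\ge 5$. The paper obtains the last inequality via the bare divisibility $\deg\phi_\eta\mid\deg\phi_{|K_X-Z|}$ and then invokes \cite{E}, Proposition~1.3(ii), whereas you unpack both steps explicitly (specialization to closed fibres for the degree bound, non-degeneracy of $Y$ for the self-intersection bound); note only that $|M|$ may have isolated base points, so your equality $M^2=\deg\phi\cdot\deg Y$ should read $\ge$, which is harmless here.
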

\begin{proof}
We consider the morphism $\phi: X_\eta\rightarrow \mathrm{P}^{r-1}_K$ defined by $V_K$, here $r$ is the dimension of the $K$-subspace spanned by $\mathrm{Im}(i)$ (note that $r=1$ will imply $|K_X|$ is composed with pencils). Note that by construction, we have a formula $$\deg\phi\deg(\phi(X_\eta))+\deg_K Z_\eta=\deg_K\omega_{X_\eta/K}=6.$$

(1) If $\phi$ is separable, then $D'$ is integral and separable over $C$ by Lemma~\ref{separable  of V_K} and its remark.

(2) If $\phi$ is not separable, then $\deg \phi=5$, $Z_\eta$ is  therefore a rational point, hence $Z_h$ must be a section. On the other side since we have $\deg(\phi)|\deg(\phi_{|K_X-Z|})$, here $\phi_{|K_X-Z|}$ is the canonical map of $X$, then  $\deg(\phi_{|K_X-Z|})\ge 5$ and hence $D'^2\ge 5(p_g-2)$ by \cite{E}, Proposition~1.3(ii). 
\end{proof}

\begin{Thm}\label{5,4 K^2>6.4}
Under the hypothesis $g=4$, $K_X^2\ge 32(q-1)/5$.
\end{Thm}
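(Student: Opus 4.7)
The plan is to mirror the strategy of Theorem \ref{p>7 theorem}. Since $p = 5$ forces $(p-1) \mid 2g$, the possible values of the Albanese fiber genus are $g \in \{2, 4, 6, \ldots\}$. For $g \geq 6$, Corollary \ref{naive estimation} already gives $K_X^2 > 4(g-1)(q-1)/3 \geq 20(q-1)/3 > 32(q-1)/5$; the case $g = 2$ is treated in Section~5. So I focus on $g = 4$, where $K_X \cdot F = 6$ and, by Lemma \ref{degree of Delta}, $\deg_K \Delta_\eta = 2pg/(p-1) = 10$.

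If $|K_X|$ is composed with a pencil, Lemma \ref{pencil equals to C} yields $K_X^2 \geq \min\{12(p_g-1),\ 6(p_g+q-1)\}$. Plugging the estimate $p_g \geq K_X^2/12 + 2(q-1)/3$ from (\ref{eq: p_g}) into the first alternative forces $12 \geq 8(q-1)$, contradicting $q \gg 1$; so $K_X^2 \geq 6(p_g + q - 1) \geq K_X^2/2 + 10(q-1)$, hence $K_X^2 \geq 20(q-1)$.

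Otherwise I pick a general $D' \in |K_X - Z|$ and invoke Lemma \ref{local 5,4}. In subcase (a), $D'$ is integral and separable over $C$; following Theorem \ref{p>7 theorem}, I set $H = D' + Z_0'$ (with $Z_0'$ the separable part of $Z_0$, any inseparable components being components of $\Delta_h$ that can be absorbed into the error terms), apply Lemma \ref{generalisation of SB}, and decompose $K_X = H + G + Z_v$ with $G$ encoding the non-reduced horizontal part. The same manipulation produces
\[
K_X^2 \geq 2(g-1)(q-1) + \tfrac{1}{2}\, H \cdot \Delta_h + \tfrac{1}{2}\, K_X \cdot G = 6(q-1) + \tfrac{1}{2}\, H \cdot \Delta_h + \tfrac{1}{2}\, K_X \cdot G.
\]
If $G \neq 0$, Lemma \ref{estimation on intersection number} supplies a positive multiple of $(q-1)$ to $K_X \cdot G$ that covers the gap $32(q-1)/5 - 6(q-1) = 2(q-1)/5$ with room to spare. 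If $G = 0$, then $F \cdot (3\Delta_h - 5H) = 0$ (since $F \cdot H = 6$ and $F \cdot \Delta_h = 10$), so Hodge Index gives $30\, \Delta_h \cdot H \geq 9 \Delta_h^2 + 25 H^2$; combined with $K_X^2 \geq H^2$, the basic estimate (\ref{basic estimation}), and the adjunction bound $K_X \cdot \Delta_h + \Delta_h^2 \geq 2(q-1)$, the argument closes just as at the end of Theorem \ref{p>7 theorem}.

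Subcase (b), where $Z_h$ is a section of $f$ and $D'^2 \geq 5(p_g - 2)$, is the main obstacle, because $D'$ need not be separable over $C$ and Lemma \ref{generalisation of SB} is no longer directly available. My plan is to decompose $K_X^2 = K_X \cdot D' + K_X \cdot Z_h + K_X \cdot Z_v$ and combine $K_X \cdot D' \geq D'^2 \geq 5(p_g - 2)$, $K_X \cdot Z_v \geq 0$ (minimality), and the section adjunction $K_X \cdot Z_h + Z_h^2 \geq 2(q-1)$ (Lemma \ref{estimation on intersection number} with $r = 1$) to obtain
\[
K_X^2 \geq 5(p_g - 2) + 2(q-1) - Z_h^2.
\]
Substituting $p_g \geq K_X^2/12 + 2(q-1)/3$ yields $K_X^2 \geq \tfrac{64}{7}(q-1) - \tfrac{12}{7} Z_h^2 + O(1)$, which already exceeds $32(q-1)/5$ once $Z_h^2 \leq 0$. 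The hard part is the residual scenario $Z_h^2 > 0$, where I intend to invoke Hodge Index $(K_X \cdot Z_h)^2 \geq K_X^2 \cdot Z_h^2$ to force $K_X \cdot Z_h$ to grow with $Z_h^2$, thereby absorbing the $-Z_h^2$ correction and completing the proof.
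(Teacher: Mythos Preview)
Your proposal is essentially the paper's proof, reorganized. Two points deserve comment.

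In subcase~(a) you introduce $Z_0'$, the ``separable part'' of $Z_0$, and assert that any inseparable component of $Z_0$ lies in $\Delta_h$. That assertion is not correct in general: an inseparable closed point of the (regular) curve $X_\eta$ need not be a non-smooth point, hence need not lie in the support of $\Delta_\eta$. Fortunately the modification is unnecessary. In case~(1) of Lemma~\ref{local 5,4} the map $\phi$ is separable, so $\deg\phi\cdot\deg\phi(X_\eta)+\deg_K Z_\eta=6$ with $\deg\phi\not\equiv 0\pmod 5$; an inseparable point of $Z_\eta$ would force $\deg_K Z_\eta\ge 5$, hence $\deg\phi=\deg\phi(X_\eta)=1$ and $X_\eta$ birational to a line, contradicting $p_a(X_\eta)=4$. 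Thus $Z_0$ is already separable over $C$ and you may take $H=D'+Z_0$ exactly as in the paper.

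In subcase~(b) your split on the sign of $Z_h^2$ is a detour. The case $Z_h^2\le 0$ is fine, but for $Z_h^2>0$ the bare Hodge inequality $(K_X\cdot Z_h)^2\ge K_X^2\,Z_h^2$ does not by itself ``absorb'' the $-Z_h^2$ term: it expresses $K_X\cdot Z_h$ in terms of $K_X^2$, which is the unknown. What makes the argument close is combining Hodge with adjunction \emph{after} an initial lower bound on $\lambda=K_X^2/(q-1)$ is in hand---and that combination is precisely Lemma~\ref{estimation on intersection number}. The paper's route is cleaner: drop $K_X\cdot Z_h\ge 0$ in $K_X^2\ge 5(p_g-2)+K_X\cdot Z_h$ to get $\lambda\ge 39/7$ for $q\gg 0$, feed this into Lemma~\ref{estimation on intersection number} (with $r=1$) to obtain $K_X\cdot Z_h\ge 3(q-1)/2$, and substitute back. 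Your $Z_h^2>0$ branch, once written out carefully, is exactly this bootstrap together with a rederivation of Lemma~\ref{estimation on intersection number}; you should cite the lemma directly.
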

\begin{proof}
(1) If $|K_X|$ is composed with a pencil, then $|K_X|=Z+f^*|M|$, and $\deg M=p_g+q-1>2(q-1)$ by Lemma~\ref{pencil equals to C} (Note that $\chi(\cO_X)> 1$ and hence $p_g>q$ by Lemma~\ref{basic estimation} and assumption $q\gg 0$). So we have $$K_X^2\ge K_X\cdot f^*M=6\deg M=6(p_g+q-1)>12(q-1).$$ 

(2) Suppose $|K_X|$ is not composed with a pencil and a general member $D'\in |K_X-Z|$ is integral and separable over $C$. Then $D'+Z_0$ is the sum of reduced divisors separable over $C$.  We can apply Lemma \ref{generalisation of SB} to the divisor $H:=D'+Z_0$. Assume $Z_h=\sum\limits_i r_iZ_i$, and let $G:=Z_h-Z_0=\sum\limits_i (r_i-1)Z_i$, then in the similar way for inequality (\ref{H,G}), we can obtain
$$2K_X^2\ge 12(q-1)+\sum\limits_i(r_i-1)K_X\cdot Z_i+H\cdot \Delta_h.$$ Note that $H\cdot \Delta_h\ge 0$ as no component of $Z_0$ could be inseparable over $C$. In particular $K_X^2/(q-1)\ge 6$  and consequently $$K_X\cdot Z_i>(\sqrt{21}-3)(q-1)>3(q-1)/2$$ by Lemma~\ref{estimation on intersection number}. So if $K_X^2\le 32(q-1)/5$, we must have $r_i=1$ for all $i$, namely $G=0$. Then a similar trick as we did to deal with the case $G=0$ in the proof of Theorem~\ref{p>7 theorem} will implies $K_X^2> 32(q-1)/5$, contradiction.

(3) Suppose $|K_X|$ is not composed with a pencil, $Z_h$ is a section and $D'^2\ge 5(p_g-2)$. Then \begin{equation}\label{local 5-4-1}
K_X^2\ge D'^2+K_X\cdot Z_h\ge 5(p_g-2)+K_X\cdot Z_h.
\end{equation}  

 In particular, $$K_X^2\ge 5(p_g-2)\ge 5(K_X^2+8(q-1)-24)/12,$$ hence $$K_X^2\ge (40(q-1)-120)/7\ge 39(q-1)/7,$$ as $q\gg  0$ by assumption. Combining this with Lemma~\ref{estimation on intersection number} we obtain $$K_X\cdot Z_h\ge (\sqrt{3705}-39)(q-1)/14\ge 3(q-1)/2.$$ Returning back to  (\ref{local 5-4-1}) and using (\ref{eq: p_g}) again,
we have $$K_X^2\ge 5(K_X^2+8(q-1)-24)/12+3(q-1)/2,$$ which implies
 $K_X^2>32(q-1)/5$ as $q\gg 0$ by assumption.
 
Lemma~\ref{local 5,4} shows that the three cases  above are exhaustive.
\end{proof}
\begin{Cor}\label{main 5,4}
If $g\ge 4$, then $\chi/c_1^2\ge 1/32$.
\end{Cor}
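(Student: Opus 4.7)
The plan is to combine Theorem~\ref{5,4 K^2>6.4}, which handles the delicate case $g=4$, with Corollary~\ref{naive estimation}, which easily covers $g\ge 6$, and then translate the resulting $K_X^2$-bound into the desired $\chi/c_1^2$-bound via Noether's formula together with the estimate $c_2(X)\ge -4(q-1)$ from \eqref{estimation of c_2}. Since both $\chi(\cO_X)$ and $c_1^2$ multiply by the degree of an \'etale cover of $C$, the ratio $\chi/c_1^2$ is invariant under pulling back by an \'etale cover; so I first reduce to the regime $q\gg \lambda(X)>0$ in which Theorem~\ref{5,4 K^2>6.4} was established.

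Next, by Lemma~\ref{degree of Delta}(1) we have $(p-1)\mid 2g$, which for $p=5$ forces $g$ to be even, so under the hypothesis $g\ge 4$ only the cases $g=4$ and $g\ge 6$ can occur. In the case $g=4$, Theorem~\ref{5,4 K^2>6.4} supplies directly $K_X^2\ge 32(q-1)/5$. In the case $g\ge 6$, Corollary~\ref{naive estimation}(1) gives the stronger bound
\[
K_X^2> \frac{4(g-1)(q-1)}{3}\ge \frac{20(q-1)}{3}> \frac{32(q-1)}{5}.
\]
Hence in every case $K_X^2\ge 32(q-1)/5$.

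Combining this inequality with $c_2(X)\ge -4(q-1)$ yields $c_2(X)\ge -5K_X^2/8$, and Noether's formula then gives
\[
12\chi(\cO_X)=c_1^2+c_2\ge c_1^2-\tfrac{5}{8}c_1^2=\tfrac{3}{8}c_1^2,
\]
so $\chi(\cO_X)/c_1^2\ge 1/32$, as required. There is no substantive obstacle: the entire technical content is absorbed into Theorem~\ref{5,4 K^2>6.4}, and this corollary is simply the case analysis that packages that result with the easy $g\ge 6$ regime and feeds the outcome into Noether's formula.
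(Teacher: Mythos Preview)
Your proof is correct and follows exactly the paper's route: the paragraph preceding Theorem~\ref{5,4 K^2>6.4} already explains that, via Noether's formula and \eqref{estimation of c_2}, it suffices to prove $K_X^2\ge 32(q-1)/5$, that the case $g\ge 6$ is handled by Corollary~\ref{naive estimation}, and that only $g=4$ remains. Your write-up simply makes these steps explicit and carries out the final numerical check $12\chi\ge \tfrac{3}{8}c_1^2$.
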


\subsection{Case $p=3$}
As another application of our method, we show $\kappa_3>0$ in this subsection. It suffices to prove that there is some positive number $\epsilon_0$ independent on $X$ such that $K_X^2\ge (4+\epsilon_0)(q-1)$ holds. Following Corollary~\ref{naive estimation}, this inequality holds automatically if $g\ge 4$. So we divide our discussions into cases $g=2$ and $3$.

\subsubsection{\textbf{Case $g=3$}}
\begin{Lem}\label{canonical map,3,3}
One of the following properties is true:
\begin{enumerate}
\item $|K_X|$ is composed with a pencil.

\item $|K_X|$ is not composed with a pencil, $Z_h$ is reduced and a general member $D'\in |K_X-Z|$ is integral and separable over $C$;

\item $Z_h$ is a section and $(K_X-Z)^2\ge 3(p_g-2)$.
\end{enumerate}
\end{Lem}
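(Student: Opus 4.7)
The plan is to imitate the proof of Lemma~\ref{local 5,4}, adapted to the numerics $p=3$, $g=3$ (so $2g-2=4$). Consider the canonical restriction
$$i:H^{0}(X,K_{X})\hookrightarrow H^{0}(X_{\eta},\omega_{X_{\eta}/K}),$$
let $V_{K}\subset |\omega_{X_{\eta}/K}|$ be the sub-linear system induced by the $K$-span of $\mathrm{Im}(i)$, of dimension $r$, and let $\phi:X_{\eta}\to\mathbb{P}^{r-1}_{K}$ be the associated morphism. The governing numerical identity is
$$\deg\phi\cdot\deg\phi(X_{\eta})+\deg_{K}Z_{\eta}=\deg_{K}\omega_{X_{\eta}/K}=4.$$
If $r=1$, then $|K_{X}|$ is composed with a pencil and we are in case~(1), so assume $r\geq 2$ from now on.

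If $\phi$ is not separable, then $p=3$ divides $\deg\phi$, and the bound $\deg\phi\leq 4$ forces $\deg\phi=3$. The identity then yields $\deg\phi(X_{\eta})=1$ (so $\phi(X_{\eta})\cong\mathbb{P}^{1}_{K}$ and $r=2$) and $\deg_{K}Z_{\eta}=1$, i.e., $Z_{h}$ is a section of $f$. The canonical map of $X$ factors through $\phi$, so its degree is divisible by $3$, and \cite{E}, Proposition~1.3(ii), yields $(K_{X}-Z)^{2}\geq 3(p_{g}-2)$; this is case~(3).

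If $\phi$ is separable, then $|K_{X}-Z|$ is fixed-part free, not composed with a pencil, and defines the separable map $\phi$. Corollary~\ref{separable  of V_K} and the remark following it then imply that a general $D'\in|K_{X}-Z|$ is integral and separable over $C$, giving one half of case~(2). It remains to show $Z_{h}$ is reduced. If some prime component of $Z_{h}$ occurs with multiplicity $\geq 2$, then $\deg_{K}Z_{\eta}\geq 2$, hence $\deg\phi\cdot\deg\phi(X_{\eta})\leq 2$; combined with the non-degeneracy of $\phi(X_{\eta})\subset\mathbb{P}^{r-1}_{K}$ and the separability of $\phi$, only a handful of sub-cases remain for the triple $(r,\deg\phi,\deg\phi(X_{\eta}))$. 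In each such sub-case, one either forces $|K_{X}|$ to be composed with a pencil (contradicting the hypothesis) or pins down $Z_{h}$ as a section with the canonical map of $X$ of degree $\geq 3$, landing in case~(3) after all. The main obstacle is precisely this sub-case analysis, which must exploit the standing assumptions $q\gg\lambda(X)>0$ and $K_{X}^{2}\gg 0$ together with the structure of geometrically rational curves of arithmetic genus $3$ in characteristic $3$ (in the spirit of Proposition~\ref{on Pa}) to eliminate the extremal degenerations.
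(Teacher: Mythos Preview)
Your overall architecture matches the paper's, and your treatment of the inseparable case is correct. The gap is in the separable case: you leave the sub-case analysis for ``$Z_h$ reduced'' as a black box, and your description of how it would resolve is wrong. You predict that the sub-cases either force $|K_X|$ to be composed with a pencil or land in case~(3), and that one needs the standing assumptions $q\gg\lambda(X)$, $K_X^2\gg 0$. None of this is what happens. If $\deg_K Z_\eta\ge 2$ and $r\ge 2$, then $\deg\phi\cdot\deg\phi(X_\eta)\le 2$, leaving only $(\deg\phi,\deg\phi(X_\eta))\in\{(2,1),(1,2),(1,1)\}$. In the case $(2,1)$, $X_\eta$ is a flat double cover of $\mathbb{P}^1_K$, so by Proposition~\ref{on Pa} its arithmetic genus is $(3^i+3^j-2)/2$ for some $i,j\ge 0$; the value $3$ never occurs, contradiction. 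In the cases $(1,2)$ and $(1,1)$, $\phi$ is birational onto a geometrically integral curve of degree $\le 2$ in projective space, hence onto a smooth rational curve; since $X_\eta$ is normal this forces $p_a(X_\eta)=0$, again a contradiction. So $\deg_K Z_\eta\le 1$ outright, with no appeal to $q\gg 0$ and no detour through cases~(1) or~(3).

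The paper organizes this more cleanly by proving $\deg_K Z_\eta\le 1$ (hence $Z_h$ reduced) \emph{before} splitting on separability of $\phi$, which avoids the awkwardness of having assumed $\phi$ separable only to derive a contradiction unrelated to that assumption. Once you know $Z_h$ is reduced, the separable/inseparable dichotomy cleanly yields cases~(2) and~(3) respectively. Also note that if $\deg_K Z_\eta\ge 2$ because some component has multiplicity $\ge 2$, then $Z_h$ cannot be a section (a section has $\deg_K Z_\eta=1$), so your proposed exit to case~(3) in that scenario is impossible on its face.
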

\begin{proof}
Assume $|K_X|$ is not composed with a pencil. Let $V=|K_X|$ and $V_K$ be its restriction to the generic fibre. Then $B:=Z_\eta$ is the fixed part of $V_{K}$. Let $\phi: X_{\eta}\rightarrow \mathbb{P}^{r-1}_K$. Note that as in case $p=5$, we have a formula $$\deg\phi\deg(\phi(X_\eta))+\deg_KB=\deg_K\omega_{X_\eta/K}=4.$$ Hence if $\deg_K B\ge 2$, we must have either $\deg\phi=2, \deg(\phi(X_\eta))=1$ or $\deg\phi=1, \deg(\phi(X_\eta))=2$. This first case implies that $X_\eta$ is quasi-elliptic,  contradiction to Lemma~\ref{on Pa}, the second implies $\phi(X_\eta)$ is a plane conic, which is indeed smooth since it is geometrically integral, and $X_\eta$ is birational to this plane conic,  contradiction. So $\deg_K B\le 1$, hence $Z_h$ is reduced.   

If $\phi$ is separable, then a general member  $D'\in|K_X-Z|$ is as stated in part (2) of our lemma by Lemma~\ref{separable  of V_K}.

If $\phi$ is inseparable, then $\deg\phi=3$, $\deg B=1$. So $Z_h$ is a section. Note that in this case the canonical map $\phi_{|K_X|}=\phi_{|K_X-Z|}$ of $X$ is also inseparable, hence its degree is at least $3$, therefore $(K_X-Z)^2\ge 3(p_g-2)$ by \cite{E} Proposition~1.3.
\end{proof}
\begin{Thm}
There is some positive constant $\epsilon_0$ independent on $X$ such that $K_X^2>(4+\epsilon_0) (q-1)$.
\end{Thm}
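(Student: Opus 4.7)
My plan is to follow the case division of Lemma~\ref{canonical map,3,3}, handling each of the three cases with an argument analogous to (but adapted for $p=3$ from) the proof of Theorem~\ref{p>7 theorem}, and then take $\epsilon_0$ to be the minimum of the three positive constants so produced.

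In case (1), where $|K_X|$ is composed with a pencil, I would apply Lemma~\ref{pencil equals to C} with $g=3$: it gives $K_X^2\ge\min\{8(p_g-1),4(p_g+q-1)\}$, and combined with the lower bound $p_g>2(q-1)/3$ from Lemma~\ref{lower bound of p_g} together with the standing assumption $q\gg 0$, both terms exceed $(4+\epsilon_1)(q-1)$ for an explicit constant $\epsilon_1>0$.

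In case (2), where $|K_X|$ is not composed with a pencil, $Z_h$ is reduced, and a general $D'\in|K_X-Z|$ is integral and separable over $C$, I would apply Lemma~\ref{generalisation of SB} to $H:=D'+Z_h$, exactly as in the proof of Theorem~\ref{p>7 theorem}; because $Z_h$ is reduced we have $G=0$, and the resulting inequality simplifies to
\[
2K_X^2\ge 2(2g-2)(q-1)+H\cdot\Delta_h=8(q-1)+H\cdot\Delta_h.
\]
To force a positive contribution from $H\cdot\Delta_h$, I would invoke Lemma~\ref{degree of Delta}(2), which gives $\Delta_h\cdot F=2pg/(p-1)=9$, while $H\cdot F=4$. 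The divisor $4\Delta_h-9H$ thus has zero intersection with $F$, so Hodge index yields $(4\Delta_h-9H)^2\le 0$, and combining this with $H^2\le K_X^2$ (from $H\le K_X$ modulo effective vertical corrections) and the adjunction bound $K_X\cdot\Delta_h+\Delta_h^2\ge 2(q-1)$ coming from Lemma~\ref{estimation on intersection number} applied componentwise should produce $H\cdot\Delta_h\ge\epsilon_2'(q-1)$ for some $\epsilon_2'>0$, hence $K_X^2\ge(4+\epsilon_2)(q-1)$.

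In case (3), where $Z_h$ is a section and $(K_X-Z)^2\ge 3(p_g-2)$, the plan is to combine $K_X^2\ge(K_X-Z)^2+K_X\cdot Z_h\ge 3(p_g-2)+K_X\cdot Z_h$ with the Noether-bound $p_g\ge K_X^2/12+2(q-1)/3$ furnished by Lemma~\ref{lower bound of p_g} and \eqref{Noether formula}, to obtain
\[
K_X^2\ge \tfrac{8}{3}(q-1)+\tfrac{4}{3}K_X\cdot Z_h-8.
\]
Since $Z_h$ is a section we have $r=1$ in Lemma~\ref{estimation on intersection number}: if $Z_h^2\le 0$ then $K_X\cdot Z_h\ge 2(q-1)$ and so $K_X^2\ge 16(q-1)/3$ directly; and if $Z_h^2>0$ part (ii) of the same lemma bounds $K_X\cdot Z_h$ from below in terms of $\lambda=K_X^2/(q-1)$, and a short bootstrap in $\lambda$ closes the case with some $\epsilon_3>0$. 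Taking $\epsilon_0:=\min\{\epsilon_1,\epsilon_2,\epsilon_3\}$ then gives the theorem.

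The main obstacle will be case (2). The Hodge-index trick that closed Theorem~\ref{p>7 theorem} relied crucially on the factor $(p-3)$, which vanishes for $p=3$; consequently the necessary positive correction has to be wrung out entirely from the large fibre-degree $\Delta_h\cdot F=9$, together with Hodge index and the bound $H^2\le K_X^2$. A subsidiary technical point that will also require some care is ensuring that every component of $Z_h$ is separable over $C$, as is needed for Lemma~\ref{generalisation of SB} to apply to $H=D'+Z_h$.
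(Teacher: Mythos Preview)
Your three-case plan mirrors the paper exactly, and your treatment of cases~(1) and~(3) is essentially the paper's own (in~(1) you use the cruder $p_g>2(q-1)/3$ where the paper feeds in \eqref{eq: p_g}, but both close). The ``subsidiary technical point'' is harmless: the proof of Lemma~\ref{canonical map,3,3} shows $\deg_K Z_\eta\le 1$, so any component of $Z_h$ is a section, hence separable.

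Case~(2) is where your plan departs from the paper and where it has a genuine gap. You propose to run the Hodge-index trick on $4\Delta_h-9H$ and combine it with $H^2\le K_X^2$ and a ``componentwise adjunction'' bound $K_X\cdot\Delta_h+\Delta_h^2\ge 2(q-1)$. Two problems arise. First, that adjunction bound does \emph{not} follow from Lemma~\ref{estimation on intersection number} applied componentwise: if $\Delta_h=\sum m_i\Pi_i$ then $K_X\cdot\Delta_h+\Delta_h^2=\sum m_i(K_X+\Delta_h)\cdot\Pi_i$, and the cross-terms $(\Delta_h-\Pi_i)\cdot\Pi_i$ may well be negative. Second---and this is the fatal point---the Hodge inequality reads
\[
H\cdot\Delta_h\;\ge\;\tfrac{2}{9}\Delta_h^2+\tfrac{9}{8}H^2,
\]
and since $9/8>1$ you cannot absorb the $H^2$-term using $H^2\le K_X^2$ as in the $G=0$ step of Theorem~\ref{p>7 theorem} (there the analogous coefficient is $p/(2(p-3))<1$ for $p\ge 7$). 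If you carry your numbers through, granting the adjunction bound and feeding in \eqref{basic estimation}, you obtain only $8K_X^2/3\ge 92(q-1)/9+9H^2/8$, which for small $H^2$ yields $K_X^2\ge \tfrac{23}{6}(q-1)<4(q-1)$: the argument does not close.

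The paper instead passes to a single \emph{prime} component $\Pi$ of $\Delta_h$ (writing $2K_X^2\ge 8(q-1)+H\cdot\Pi$) and then runs the $G=0$ trick with $\Pi$ in place of $\Delta_h$. This fixes both issues at once: adjunction $K_X\cdot\Pi+\Pi^2\ge 2(q-1)$ is immediate for a prime $\Pi$, and since $\pi:=\Pi\cdot F$ is a multiple of $3$ one can take $\pi\in\{3,6\}$ whenever $\Delta_h$ is not itself prime of fibre-degree $9$; with $\pi\le 6$ the Hodge coefficient $\pi/8<1$ and the computation closes (for instance $\pi=3$ gives $K_X^2\ge \tfrac{352}{87}(q-1)>4(q-1)$). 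So the remedy is to replace $\Delta_h$ by a prime component $\Pi$ in your Hodge step, exactly as the paper does.
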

\begin{proof}
There are only three possibilities as below by the previous lemma.

(1). The canonical system $|K_X|$ is composed with a pencil. Then it follows from Lemma~\ref{pencil equals to C} $$K_X^2\ge 4 \min\{2p_g-2, p_g+q-1\}.$$  Combing this inequality with (\ref{eq: p_g}), we have either 
\begin{enumerate}[A)]
\item $K_X^2\ge 2(K_X^2+8(q-1)-12)/3$; or 
\item $K_X^2\ge (K_X^2+20(q-1))3$.
\end{enumerate}
Both conditions imply that $K_X^2\ge (4+\epsilon_0)(q-1)$ for some constant $\epsilon_0>0$ independent on $X$ as $q\gg 0$.

(2). The canonical system $|K_X|$ is not composed with a pencil, $Z_h$ is reduced and a general member $D'\in|K_{X}-Z|$ is integral and separable over $C$. So $D=D'+Z\in |K_X|$ and $D'+Z_h=D_h$. We can then apply Lemma~\ref{generalisation of SB} to $H=D_h$, hence $$2K_X^2\ge (K_X+D_h)\cdot D_h\ge N\cdot D_h\ge 8(q-1)+D_h\cdot \Pi,$$ here $\Pi$ is any prime component of $\Delta_h$. A similar trick as we did to deal with the case $G=0$ in the proof of Theorem~\ref{p>7 theorem} now gives $K_X^2\ge (4+\epsilon_0)(q-1)$ for some constant $\epsilon_0>0$ independent on $X$. 

(3). The canonical system $|K_X|$ is not composed with a pencil, $Z_h$ is a  section and $(K_X-Z)^2\ge 3(p_g-2)$. Then we have $$K_X^2\ge (K_X-Z)^2+K_X\cdot Z_h\ge 3(p_g-2)+K_X\cdot Z_h.$$ Note that (\ref{basic estimation}) implies $K_X^2\ge 8(q-1)/3$ and hence Lemma~\ref{estimation on intersection number} implies $K_X\cdot Z_h> 4(q-1)/3$, so we get $$K_X^2\ge 3(p_g-2)+4(q-1)/3.$$ After combining  with (\ref{eq: p_g}) and an easy computation, this inequality will soon imply $K_X^2\ge (4+\epsilon_0)(q-1)$ for some constant $\epsilon_0>0$ independent on $X$. 
\end{proof}

\subsubsection{\textbf{Case $g=2$}}
\begin{Lem}
If $g=2$, then $\Delta_h$ is reduced and $\deg_K(\Delta_\eta)=6$.
\end{Lem}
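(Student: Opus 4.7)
The assertion $\deg_K(\Delta_\eta) = 6$ is immediate from Lemma~\ref{degree of Delta}(2), since $g = 2 < (p^2-1)/2 = 4$. Via the identification $\cO_{\Delta_\eta} \simeq \cA := \Omega_{X_\eta/K,\tor}$, the reducedness of $\Delta_h$ is equivalent to the assertion that $\Len_{\cO_{X_\eta,s}}\cA_s = 1$ at each non-smooth point $s \in X_\eta$.

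I would argue locally using the Frobenius factoring of Corollary~\ref{complete}(2): since $g < (p^2-1)/2$, the normalization $Y$ of $X_\eta^{(p)}$ is a smooth conic, giving a purely inseparable morphism $\pi : X_\eta \to Y$ of degree $p = 3$. At each non-smooth $s$ with $y = \pi(s)$, we have $\cO_{X_\eta,s} = \cO_{Y,y}[T]/(T^3 - a)$ with $a$ either a uniformizer (the ramified case, where $K(s)=K(y)$ and $e_s = 3$) or a unit with $\bar a \notin K(y)^p$ (the unramified case, where $e_s = 1$ and $[K(s):K(y)] = 3$). The ramified case can be ruled out at once: by Remark~\ref{modulo p} we have $[K(s):K] \ge p = 3$; since $K$ is a function field in one variable over the algebraically closed $k$ and $K(s)/K$ is inseparable, $\dim_{K(s)}\Omega_{K(s)/K} \ge 1$, so Lemma~\ref{torsion}(2) gives $\Len\cA_s \ge p = 3$ and a contribution of at least $3 \cdot 3 = 9 > 6$ to $\deg\Delta_\eta$, which is absurd.

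In the unramified case, $[K(s):K] = 3[K(y):K]$, and the bound $\deg\Delta_\eta = 6$ forces $[K(y):K] \le 2$; in particular $K(y)/K$ is separable (as $\gcd(2,3)=1$). Cohen's theorem then produces $\widehat{\cO}_{Y,y} \simeq K(y)[[\pi_A]]$ as $K$-algebras. Writing $a = \sum_{i \ge 0} a_i \pi_A^i$ with $a_i \in K(y)$, the separability of $K(y)/K$ gives $da_i = 0$ in $\Omega_{A/K}$, so from the proof of Lemma~\ref{exact1} one reads off $\cA_s \simeq B/(r)$ with $r = \sum_{i \ge 1} i\, a_i \pi_A^{i-1}$, and $\Len\cA_s = v_A(r)$ since $e_s = 1$. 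The crux is now a characteristic-$3$ arithmetic fact: the coefficient of $\pi_A^2$ in $r$ equals $3 a_3 = 0$, so $v_A(r) \ne 2$. Summing contributions over non-smooth points yields $\sum_s v_A(r_s)[K(y_s):K] = 2$, and the exclusion of the value $2$ then forces $v_A(r_s) = 1$ at every non-smooth $s$, completing the proof of reducedness. The delicate step is precisely this characteristic-$3$ vanishing $3a_3 = 0$, which rules out the length-$2$ possibility and is the only place where $p=3$ is essentially used.
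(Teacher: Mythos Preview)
Your proof is correct and takes a genuinely different route from the paper's. The paper exploits the fact that a genus-$2$ curve is automatically hyperelliptic: the canonical map realises $X_\eta$ as a flat double cover of $\mathbb{P}^1_K$ with reduced branch divisor $B$ of degree $6$; since $X_\eta$ is geometrically rational, $(B_{\bar K})_{\mathrm{red}}$ has degree $2$, and a short case analysis shows $B$ is either one inseparable point of degree $6$ or two inseparable points of degree $3$. As $\Delta_\eta$ lies over $B$ and has the same degree, it is forced to be reduced. Your argument instead goes through the Frobenius cover $\pi: X_\eta \to Y$ onto a smooth conic and computes the torsion lengths locally, the decisive input being the characteristic-$3$ vanishing $3a_3=0$ that forbids $\Len\cA_s=2$. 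The paper's approach is shorter and more geometric, resting on the double-cover structure specific to $g=2$; yours is more computational but makes the role of the prime $p=3$ completely explicit and does not rely on hyperellipticity, so it could in principle be adapted to other small-genus situations.

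One small imprecision worth noting: you write ``$da_i = 0$ in $\Omega_{A/K}$'', but the coefficients $a_i$ live in $K(y)\subset \widehat A$, not in $A$ itself. The clean formulation is that the formal $K$-derivation $D:\widehat A\to\widehat A$ with $D(\pi_A)=1$ and $D|_{K(y)}=0$ (the latter because $\Omega_{K(y)/K}=0$) restricts on $A$ to the map $a\mapsto r$ determined by $da=r\,d\pi_A$ in $\Omega_{A/K}=A\,d\pi_A$; this is exactly what you use, and the conclusion $v_A(r)\ne 2$ follows as you say.
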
  
\begin{proof}
The canonical morphism of $X_\eta/K$ here is automatically a flat double cover of $\mathbb{P}(H^0(X_{\eta}, \omega_{X_{\eta}/K}))$. Let $B\subseteq \mathbb{P}(H^0(X_{\eta}, \omega_{X_{\eta}/K}))$ be the branch divisor associated to this double cover, then $\deg B=6$ by Corollary~\ref{genus}. Note that $X_{\eta}/K$ is geometrically rational, so $\deg_{\widehat{K}} (B_{\widehat{K}})_{\mathrm{red}}=2$. Hence $B$ is either an inseparable point of degree $6$, or the sum of two inseparable points of degree $3$. Now since $\Delta_\eta$ dominates $B$ and has the same degree over $K$, $\Delta_\eta$ must be reduced.
\end{proof}

\begin{Lem}
The bi-canonical system $|2K_X|$ is base point free and a general member of $|2K_X|$ is integral and separable over $C$.
\end{Lem}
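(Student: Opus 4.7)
The plan is to analyze the bi-canonical map first on the generic fibre and then transfer base point freeness and separability to $X$.

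First, since $p = 3 \neq 2$, the proof of the previous lemma shows that $X_\eta$ is a geometrically integral Gorenstein curve of arithmetic genus $2$ and that the canonical morphism $\phi_1 : X_\eta \to \mathbb{P}^1_K$ is a separable flat double cover. Riemann--Roch on $X_\eta$ then gives $h^0(X_\eta, 2\omega_{X_\eta/K}) = 3$, and the bi-canonical morphism of $X_\eta$ factors as $\phi_1$ followed by the Veronese embedding $\mathbb{P}^1_K \hookrightarrow \mathbb{P}^2_K$. Consequently $|2\omega_{X_\eta/K}|$ is base point free, and the associated morphism is separable of degree $2$ onto a smooth plane conic.

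Next I would deduce base point freeness of $|2K_X|$ on $X$. Under our standing assumptions $q \gg \lambda(X) > 0$ and $K_X^2 \gg 0$, the arguments of \cite{SB}, Theorems~24--27 adapt to the boundary case $g = 2$: base point freeness of $|2K_X|_F|$ on a general fibre $F$ (established in the previous paragraph) rules out horizontal base components, while a vertical base component $\Gamma \subseteq X_c$ would satisfy $K_X \cdot \Gamma \geq 0$ and $\Gamma^2 \leq 0$, which via a Riemann--Roch and connectedness argument as in \cite{SB}, Theorem~25 contradicts the bigness and nefness of $K_X$. The main obstacle here is to verify that the reasoning of \cite{SB}, formulated for $g > 2$, still carries through when $g = 2$, using the explicit structure of $X_\eta$ above.

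Finally, for integrality and separability over $C$ of a general member, I would apply Corollary~\ref{separable  of V_K}. Set $V := |2K_X|$ and let $V_K \subseteq |2\omega_{X_\eta/K}|$ be its restriction to $X_\eta$. Since $|2K_X|$ is base point free, so is $V_K$, and the morphism defined by $V_K$ coincides with the bi-canonical morphism of $X_\eta$, which is separable. Hence a general $D \in V$ has reduced horizontal part, separable over $C$. The bi-canonical image of $X$ is two-dimensional (otherwise $|2K_X|$ would be composed with the Albanese fibration $f$, contradicting the fact that the restricted map on a general fibre is finite of degree $2$), so by the remark following Corollary~\ref{separable  of V_K} together with \cite{J} Theorem~6.11, a general member of $|2K_X|$ is irreducible, hence integral.
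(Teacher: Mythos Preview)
Your argument is correct, but it is considerably more elaborate than the paper's, which dispatches the lemma in two lines. The differences are worth noting.

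For base point freeness, the paper simply invokes \cite{SB}, Theorem~25, together with the standing assumption $K_X^2\gg 0$; no adaptation to $g=2$ is carried out or needed. Your concern that the $g>2$ hypothesis in the earlier citation is essential is misplaced: that restriction was for the \emph{birationality} of the bi-canonical map, not for base point freeness.

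For separability and integrality, the paper bypasses your entire analysis of the bi-canonical map on $X_\eta$ with a one-line numerical check: $2K_X\cdot F = 2(2g-2)=4$ and $p=3\nmid 4$, so Remark~(1) after Corollary~\ref{separable  of V_K} gives separability of $\phi$ automatically, and Remark~(2) (together with $|2K_X|$ not being composed with a pencil, which you argue correctly) gives integrality. Your route---factoring the bi-canonical through the canonical double cover and the Veronese---also works, but note a small imprecision: you assert that the morphism defined by $V_K$ ``coincides'' with the full bi-canonical morphism of $X_\eta$, whereas a priori $V_K$ is only a sub-linear system of $|2\omega_{X_\eta/K}|$. This does not actually harm your conclusion, since the map defined by $V_K$ still factors through the separable canonical double cover $X_\eta\to\mathbb{P}^1_K$, but it would be cleaner to say so explicitly. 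The paper's numerical criterion avoids this subtlety entirely.
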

\begin{proof}
First by \cite{SB}, Theorem 25 and our assumption $K_X^2\gg 0$, we see that  $|2K_X|$ is free of base points. Everything then follows from Lemma~\ref{separable  of V_K} and its remark 
\end{proof}

From this lemma, we shall apply  Lemma \ref{generalisation of SB} to $H=2K_X$, hence
\begin{equation}\label{basic estimation in 3,2}
3K_X^2\ge 4(q-1)+K_X\cdot \Delta_h.
\end{equation}

\begin{Lem}\label{98}
Either \begin{enumerate}
\item $|K_X|$ is composed with a pencil; or

\item  $|K_X|$ is not composed with a pencil, $Z$ is vertical, and  a general member $D\in|K_X-Z|$ is an integral horizontal divisor such that $D^2\ge 2(p_g-2)$.
\end{enumerate}
\end{Lem}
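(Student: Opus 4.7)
The plan is to mimic Lemma~\ref{canonical map,3,3}, passing to the generic fibre and exploiting the constraint imposed by $2g-2=2$. Assume $|K_X|$ is not composed with a pencil. Let $\phi\colon X_\eta \to \mathbb{P}^{r-1}_K$ ($r \ge 2$) be the morphism defined by the restriction sub-linear system $V_K$ of $V := |K_X|$, and let $Z_\eta$ denote the fixed part of $V_K$. The numerical identity
$$\deg\phi \cdot \deg\phi(X_\eta) + \deg_K Z_\eta = \deg_K \omega_{X_\eta/K} = 2$$
leaves only three possibilities, and I would first eliminate the two with $\deg\phi = 1$.

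If $\deg\phi = 1$ and $\deg\phi(X_\eta) = 1$, then $\phi(X_\eta) = \mathbb{P}^1_K$, and as $X_\eta$ is a normal projective curve birational to $\mathbb{P}^1_K$ it must coincide with $\mathbb{P}^1_K$. If $\deg\phi = 1$ and $\deg\phi(X_\eta) = 2$, then $\phi(X_\eta)$ is a geometrically integral degree-$2$ projective curve, hence a smooth plane conic, and once again $X_\eta$, being its normalisation, equals this smooth curve. Both conclusions contradict $p_a(X_\eta) = g = 2$.

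Thus we must have $\deg\phi = 2$, $\deg\phi(X_\eta) = 1$ and $\deg_K Z_\eta = 0$, so $Z_h = 0$ and $Z$ is vertical. For a general $D \in |K_X-Z|$: this sub-linear system has no fixed components and is not composed with a pencil, and $\phi$ is automatically separable since $\deg\phi = 2$ is coprime to $p = 3$. Hence Corollary~\ref{separable  of V_K} together with its remark (using \cite{J}, Theorem~6.11) gives that $D$ is integral, and the positivity $D \cdot F = (K_X-Z)\cdot F = 2 > 0$ implies $D$ is horizontal. Finally, the canonical map $\phi_{|K_X-Z|}$ of $X$ restricts on the generic fibre to $\phi$, so its generic degree is divisible by $2$; the inequality
$$D^2 = (K_X-Z)^2 \ge 2(p_g-2)$$
then follows from \cite{E}, Proposition~1.3(ii).

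The main obstacle is the exclusion of the two $\deg\phi = 1$ subcases, which rests on combining the normality of $X_\eta$ with the classification of integral projective curves of degree at most $2$ to force the contradictory equality $p_a(X_\eta) = 0$.
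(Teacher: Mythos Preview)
Your proof is correct and follows the same strategy as the paper—exploiting the constraint $\deg_K\omega_{X_\eta/K}=2$ on the generic fibre—though the paper phrases it via the intersection number $1<F\cdot(K_X-Z)\le F\cdot K_X=2$ rather than the explicit degree formula. Your elimination of the two $\deg\phi=1$ subcases makes rigorous what the paper asserts tersely (both the strict inequality $F\cdot(K_X-Z)>1$ and the claim that ``the canonical map has degree at least $2$''). A slightly quicker route you might note: since $r\le h^0(X_\eta,\omega_{X_\eta/K})=g=2$, necessarily $r=2$ and $V_K$ is the \emph{complete} canonical system of the regular genus-$2$ curve $X_\eta$, which is base-point free and defines a degree-$2$ map to $\mathbb{P}^1_K$; this gives $Z_\eta=0$ and $\deg\phi=2$ in one stroke, without the case analysis.
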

\begin{proof}
Suppose $|K_X|$ is not composed with a pencil. Let  $V:=|K_X-Z|$, since $V$ has horizontal part so $1< F\cdot (K_X-Z)\le F\cdot K_X=2$, hence $Z$ is vertical. It then follows  from Lemma~\ref{separable  of V_K} and its remark that a general member $D\in V$ is integral and separable over $C$. Finally \cite{E} Proposition~1.3 show that $D^2\ge 2(p_g-2)$ as the canonical map has degree at least $2$ in this case.
\end{proof}

\begin{Thm}
We have $K_X^2>(4+\epsilon_0)(q-1)$ for some positive constant $\epsilon_0$ independent on $X$.
\end{Thm}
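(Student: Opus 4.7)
The plan is to split into the two cases provided by Lemma~\ref{98}, following the template of the preceding theorems in this subsection. Throughout I set $\lambda := K_X^2/(q-1)$ and use the standard lower bound $p_g \ge (K_X^2 + 8(q-1))/12$ coming from (\ref{eq: p_g}) together with $c_2 \ge -4(q-1)$.

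In Case~(1), where $|K_X|$ is composed with a pencil, Lemma~\ref{pencil equals to C} gives $|K_X| = Z + f^*|M|$ with $h^0(M)=p_g$ and $K_X^2 \ge \min\{4(p_g-1),\ 2(p_g+q-1)\}$; combined with the lower bound on $p_g$, this already yields $K_X^2 \ge 4(q-1) - O(1)$. To promote this to $K_X^2 \ge (4+\epsilon_0)(q-1)$, the key observation is that since $K_X \cdot F = 2g-2 = 2$ and the moving part $f^*|M|$ is vertical, the horizontal part $Z_h$ of the fixed part $Z$ satisfies $Z_h \cdot F = 2$; as $p = 3 \nmid 2$, every prime component of $Z_h$ is automatically separable over $C$ (either a section or a single prime of degree $2$). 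I will then apply Lemma~\ref{estimation on intersection number} to bound $K_X\cdot Z_h$ from below in terms of $\lambda$, and combine this with $K_X^2 \ge K_X \cdot Z_h + 2\deg M$ and Lemma~\ref{clifford} to extract the required margin.

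In Case~(2), where $|K_X|$ is not composed with a pencil, the fixed part $Z$ is vertical, and $D \in |K_X - Z|$ is an integral horizontal divisor separable over $C$ with $D\cdot F = 2$ and $D^2 \ge 2(p_g-2)$. I will apply Lemma~\ref{generalisation of SB} to $D$ to obtain
\[ 4(q-1) + \Delta_h \cdot D \le K_X \cdot D + D^2, \]
then combine this with $K_X \cdot Z \ge 0$, $Z \cdot D \ge 0$, $Z^2 \le 0$, and Hodge index $(K_X \cdot D)^2 \ge K_X^2 \cdot D^2$ together with $D^2 \ge 2(p_g-2)$. Coupled with the bi-canonical bound (\ref{basic estimation in 3,2}) and the lower bounds on $K_X\cdot \Delta_h$ supplied by Lemma~\ref{estimation on intersection number} (exploiting that the prime components of $\Delta_h$ are inseparable of $K$-degree $3$ or $6$, with separable degree $r=1$ or $r=2$ respectively), this produces an inequality system in $\lambda$ which I expect to force $\lambda \ge 4 + \epsilon_0$ for some uniform $\epsilon_0 > 0$.

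The main obstacle is Case~(2): the individual estimates (bi-canonical, $D^2$-Hodge, $\Delta_h$-intersection) are each only barely strong enough to give $\lambda \ge 2$ or $\lambda \ge 3$ in isolation, so the required margin $\epsilon_0 > 0$ must be extracted from their simultaneous combination. A delicate case analysis according to the structure of $\Delta_h$ (one prime of degree~$6$ vs.\ two primes of degree~$3$) and according to whether the vertical fixed part $Z$ is zero or not will be needed to close the gap, with the constant $\epsilon_0$ being the minimum over these sub-cases.
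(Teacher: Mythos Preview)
Your Case~(1) strategy is viable but differs from the paper's. You route through $Z_h$ and Lemma~\ref{estimation on intersection number}; the paper instead observes that no prime component of $\Delta_h$ can coincide with a component of $Z$ (degree reasons: every component of $\Delta_h$ has $K$-degree divisible by $3$, whereas $Z_h\cdot F=2$), whence $K_X\cdot\Delta_h=Z\cdot\Delta_h+6\deg M\ge 6\deg M$, and plugs this directly into the bi-canonical inequality~(\ref{basic estimation in 3,2}). That single line yields $3K_X^2\ge 4(q-1)+6\deg M$, which combined with~(\ref{eq: p_g}) immediately gives $\lambda>4+\epsilon_0$; your route requires a further case split on the shape of $Z_h$ and a bootstrap via Lemma~\ref{estimation on intersection number}.

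Your Case~(2) plan has a genuine gap. The constraints you list --- $K_X\cdot D+D^2\ge 4(q-1)+\Delta_h\cdot D$, $K_X\cdot Z\ge 0$, $Z\cdot D\ge 0$, $Z^2\le 0$, $(K_X\cdot D)^2\ge K_X^2D^2$, $D^2\ge 2(p_g-2)$, the bi-canonical bound~(\ref{basic estimation in 3,2}), and the lower bound on $K_X\cdot\Delta_h$ from Lemma~\ref{estimation on intersection number} --- are all simultaneously satisfied at (for instance) $\lambda=3$, $K_X\cdot D=3(q-1)$, $D^2=\tfrac{11}{6}(q-1)$, $\Delta_h\cdot D=0$, $K_X\cdot\Delta_h\approx 2.3(q-1)$. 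So no ``delicate case analysis'' with these ingredients alone can force $\lambda\ge 4+\epsilon_0$. What is missing is a \emph{lower} bound on $\Delta_h\cdot D$: the paper obtains it by applying the Hodge index theorem to the fibre-orthogonal class $3D-\Delta_h$ (since $\deg_K\Delta_\eta=6=3\cdot(D\cdot F)$), getting
\[
D\cdot\Delta_h\ \ge\ \tfrac{3}{2}D^2+\tfrac{1}{6}\Delta_h^2.
\]
Together with adjunction for the reduced divisor $\Delta_h$ (whose total separable degree over $C$ is $2$, so $K_X\cdot\Delta_h+\Delta_h^2\ge 4(q-1)$), this turns the inequality from Lemma~\ref{generalisation of SB} into $K_X^2\ge 4(q-1)+\tfrac{1}{2}D^2+\tfrac{1}{6}\Delta_h^2$; adding one sixth of~(\ref{basic estimation in 3,2}) then eliminates $\Delta_h^2$ and closes to $\lambda\ge 72/17>4$. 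Without the Hodge step on $3D-\Delta_h$ your system of inequalities does not close.
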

\begin{proof}
(1) If $|K_X|$ is composed with a pencil, then $|K_X|=Z+f^*|M|$, and $\deg M\ge \min\{2p_g-2,p_g+q-1\}$ (Lemma~\ref{pencil equals to C}).  Note in this case that the components  $\Delta_h$ is different from any component of $Z$ for sake of degree over $C$, so $$K_X\cdot \Delta\ge K_X\cdot \Delta_h=Z\cdot \Delta_h+6\deg M\ge 6\deg M.$$ Hence (\ref{basic estimation in 3,2}) shows that $$3K_X^2\ge 4(q-1)+6\deg M.$$ After combining this with (\ref{eq: p_g}) and an easy computation we obtain $$K_X^2\ge (4+\epsilon_0)(q-1).$$

(2) Suppose $|K_X|$ is not composed with a pencil. Let $D\in |K_X-Z|$ be a general member.  By Lemma \ref{generalisation of SB}, we have 
\begin{equation}\label{99}
(K_X+D)\cdot D\ge N\cdot D\ge 4(q-1)+D\cdot \Delta.
\end{equation}
Since by construction $(3D-\Delta_h)\cdot F=0$, we have $(3D-\Delta_h)^2\le 0$, {\it i.e.}
$$D\cdot \Delta_h\ge 3D^2/2+\Delta_h^2/6.$$
Combining this with(\ref{99}) and Lemma~\ref{98}
we see that \begin{align*}
K_X^2&\ge(K_X+D)\cdot D-D^2\ge 4(q-1)+D\cdot \Delta_h-D^2\\&\ge 4(q-1)+ D^2/2+\Delta_h^2/6\ge 4(q-1)+p_g-2+\Delta_h^2/6.
\end{align*}
Combining with (\ref{basic estimation in 3,2}) and (\ref{eq: p_g}), we obtain\begin{align*}
3K_X^2/2&= (3K_X^2)/6+K_X^2
\\&\ge(4+4/6)(q-1)+p_g-2+(\Delta_h^2+K_X\cdot \Delta_h)/6\\&\ge 16(q-1)/3+p_g-2\\&\ge 16(q-1)/3+(K_X^2+8(q-1))/12-2.
\end{align*}
Hence $K_X^2\ge 72(q-1)/17-24/17\ge (4+\epsilon_0)(q-1)$ as $q\gg 0$.
\end{proof}

\begin{Cor}\label{kappa_3}
We have $\kappa_3>0$.
\end{Cor}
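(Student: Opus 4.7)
The plan is to derive the corollary nearly immediately from the three preceding theorems (for $g=2$, $g=3$, and $g\ge 4$), which together furnish a uniform estimate $K_X^2\ge (4+\epsilon_0)(q-1)$, combined with the Grothendieck--Ogg--Shafarevich bound $c_2(X)\ge -4(q-1)$ from (\ref{estimation of c_2}) and Noether's formula (\ref{Noether formula}).

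First I would dispose of surfaces of general type with $c_2(X)\ge 0$: for these Noether's formula gives $\chi(\cO_X)/c_1^2\ge 1/12$ trivially. So it is enough to bound $\chi/c_1^2$ from below on the locus $c_2<0$, on which Theorem~\ref{Albanese} produces the Albanese fibration $f:X\to C$ with $q\ge 2$. The \'etale-cover reduction recalled in this section (pulling back $X\to C$ by an \'etale cover $C'\to C$ multiplies $q-1$ and $K_X^2$ by the degree while leaving $\chi/c_1^2$ unchanged) allows us to assume $q\gg 0$ and $K_X^2\gg 0$.

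Next I would invoke the three theorems to obtain $K_X^2\ge (4+\epsilon_0)(q-1)$ uniformly. For $g=2$ and $g=3$ this is precisely what the preceding two theorems assert. For $g\ge 4$ I would use Corollary~\ref{naive estimation}: when $g\ge 5$ it already gives $K_X^2>4(g-1)(q-1)/3\ge (16/3)(q-1)$, comfortably above $4(q-1)$; for the boundary case $g=4$ I would refine (\ref{basic estimation}) using Lemma~\ref{estimation on intersection number} applied to a prime horizontal component of $\Delta_h$ (which is nonzero, with $\deg_K \Delta_\eta=12$ by Lemma~\ref{degree of Delta}) to extract a positive contribution $K_X\cdot \Delta_h/3$ of order $(q-1)$, which boosts the coefficient strictly above $4$. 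Taking the minimum over the finitely many small values of $g$ and the uniform bound for large $g$ yields a single $\epsilon_0>0$ valid for all $X$.

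Finally, combining $K_X^2\ge (4+\epsilon_0)(q-1)$ with $c_2\ge -4(q-1)$ in Noether's formula gives
$$12\chi(\cO_X)=K_X^2+c_2\ge K_X^2-4(q-1)\ge \frac{\epsilon_0}{4+\epsilon_0}K_X^2,$$
so $\chi(\cO_X)/c_1^2\ge \epsilon_0/(12(4+\epsilon_0))>0$, whence $\kappa_3>0$. The delicate point is the uniformity of $\epsilon_0$ in the boundary case $g=4$: Corollary~\ref{naive estimation} alone merely yields the strict inequality $K_X^2>4(q-1)$ and provides no a priori linear margin as $q\to\infty$, so one must explicitly extract the $K_X\cdot \Delta_h$ contribution from (\ref{basic estimation}) (linear in $q-1$ by Lemma~\ref{estimation on intersection number}) to close the gap. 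Once that is done, the corollary reduces to the elementary computation above.
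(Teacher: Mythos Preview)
Your proposal is correct and follows the paper's intended route: the paper states at the outset of the $p=3$ subsection that it suffices to obtain $K_X^2\ge (4+\epsilon_0)(q-1)$, asserts this follows from Corollary~\ref{naive estimation} for $g\ge 4$, and then treats $g=2,3$ by the two theorems you cite. Your final derivation of $\chi/c_1^2\ge \epsilon_0/(12(4+\epsilon_0))$ via (\ref{estimation of c_2}) and Noether's formula is exactly what the paper leaves implicit.

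You are right to flag the boundary case $g=4$: Corollary~\ref{naive estimation} alone yields only the strict inequality $K_X^2>4(q-1)$, not a uniform linear margin, so the paper is slightly elliptic here. Your remedy---feeding a lower bound for $K_X\cdot\Delta_h$ from Lemma~\ref{estimation on intersection number} back into (\ref{basic estimation})---is the natural bootstrap and closes the gap. One small correction: you invoke Lemma~\ref{degree of Delta} to assert $\deg_K\Delta_\eta=12$, but for $p=3$, $g=4$ the hypothesis $g<(p^2-1)/2=4$ of part~(2) fails. This does not affect your argument, since all you actually need is $\Delta_h\ne 0$, and that follows directly from the non-smoothness of $X_\eta$ (Theorem~\ref{Albanese}(2)), which forces $(\Omega_{X_\eta/K})_{\mathrm{tor}}\ne 0$.
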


To close this section, we mention that if combine all the theorems proven in this section, we get a proof of Theorem~\ref{Main 2} except for the last statement $\kappa_5=1/32$.

\section{Case of hyperelliptic Fibration}\label{final}
We keep the  notations of Section~4 a)-h). In this section, we calculate $\chi(\cO_X)$ directly under the assumption $p\ge 5, g=(p-1)/2$ and $X_\eta$ is quasi-hyperelliptic.  Our calculation will show that our conjectural $\kappa_p$ is indeed the best bound of $\chi/c_1^2$ for these surfaces. It is  natural to believe that those surfaces whose $\chi/(c_1^2)$ approaches $\kappa_p$ should appear in the case $g=(p-1)/2$, the smallest possible value of $g$, so somehow we have proven our conjecture for the "hyperelliptic part".

From now on we assume $X_\eta$ is quasi-hyperelliptic and $g=(p-1)/2$.

By our assumption  $X_\eta$ is a flat double cover of a smooth plane conic $P$.  Let $B\subset P$ be the branch divisor of this flat double cover, then $\deg B=p+1$ by Corollary \ref{genus}. Since $X_{\eta}/K$ is normal but not geometrically normal by assumption, $B/K$ is reduced but not geometrically reduced (Proposition \ref{Property}). Therefore $B$ contains at least one inseparable point. Consequently $B$ is the sum of a rational point and an inseparable point of degree $p$, in particular $P\simeq \mathbb{P}_K^1$.

We then identify $P$ with the generic fibre of $p_1:Z=\mathbb{P}_C^1\to C$ in a way such that the rational point contained in $B$ is the infinity point.
Here we denote by $U,V$ the two homogeneous coordinates of $\mathbb{P}^1$, and $\infty$ is defined by $V=0$.   Denote by $\varTheta_K$ the inseparable point contained in $B$, so $\Theta_K$ is defined by $U^p-hV^p$ for a certain element $h\in K\backslash K^p$.  

Let $X_{0}$ be the normalisation of $Z$ in $K(X)$, and let $\Pi$ be the branch divisor associated to this flat double cover $X_{0}\rightarrow Z$, then $B=\Pi|_{\mathbb{P}^{1}_{K}}$. Define $\Pi_{1}$ (resp. $\Pi_{2}$) to be the closure of $\infty \in B$ (resp. $\varTheta_K \in B$) in $Z$ and $\Pi_{3}$ to be the remaining vertical branch divisors.

Here by abuse of language we denote by $h$ not only  the element of $K$ mentioned above to define $\varTheta_K$ but also the unique morphism $h: C\rightarrow \mathbb{P}^{1}_{k}$ that maps $u=U/V$ to $h$ in function fields.  Define $\alpha:=\deg(h)$ and $A:=h^{*}(\infty)$, it is clear that $\deg A=\alpha$.

With some local computations we immediately obtain the next proposition on the configuration of $\Pi$.
\begin{Prop}\label{basic proposition} We have
\begin{enumerate}

\item $\Pi_{1}=C\times_{k}\infty$, and $\Pi_{1}\cap\Pi_{2}$ equals to $A\times \infty$.

\item $\mathcal{O}_{Z}(\Pi_{2})=\mathcal{O}(p)\otimes\mathcal{O}_{Z}(p_{1}^{*}A)$,  the canonical morphism $\Pi_{2}\rightarrow C$ is a homeomorphism, and the singularities of $\Pi_{2}$ are exactly the pre-image of points on $C$ where the morphism $h$ is ramified.

\item $\mathcal{O}_{Z}(\Pi_{1})=\mathcal{O}(1)$, $\Pi_{3}=p_{1}^{*}D$, for a reduced divisor $D$. Let $d:=\deg D$, then $\alpha+d$ is even, and $\mathcal{O}_{Z}(\Pi)=\mathcal{O}(p+1)\otimes p_{1}^{*}\mathcal{O}_{C}(A+D)$.

\item $\chi (\mathcal{O}_{X_{0}})=(p-3)(q-1)/2+(p-1)(\alpha+d)/4$.
\end{enumerate}
\end{Prop}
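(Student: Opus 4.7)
The plan is to verify the four parts in order, combining local computations on the two standard affine charts of $Z=\mathbb{P}^1_C$ with the flat double cover formulas from Subsection~\ref{flat double cover}.

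For (1) and the first assertion of (3): since $\Pi_1$ is the Zariski closure of the rational point $V=0$ of the generic fibre, it is visibly the constant section $C\times_k\{\infty\}$, and therefore $\cO_Z(\Pi_1)\simeq\cO(1)$. To locate $\Pi_1\cap\Pi_2$, I read off a local equation for $\Pi_2$ from $U^p-hV^p$: near a pole $c\in C$ of order $n$ of $h$, writing $h=h_0/t^n$ and clearing denominators gives $t^nU^p-h_0V^p=0$, whose intersection with $V=0$ is $\{t=0,V=0\}$, while at a regular point of $h$ the equation $U^p-hV^p=0$ avoids $V=0$. This yields $\Pi_1\cap\Pi_2=A\times_k\{\infty\}$.

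For (2), viewing $h\in K$ as a global section of $\cO_C(A)$ (its polar divisor is exactly $A$), the expression $U^p-hV^p$ becomes a global section of $\cO(p)\otimes p_1^*\cO_C(A)$ whose zero scheme is $\Pi_2$, giving the line-bundle identity. The morphism $\Pi_2\to C$ is finite purely inseparable of degree $p$ (its function field is $K[u]/(u^p-h)$), hence a bijection on closed points of an algebraically closed field of characteristic $p$, and therefore a homeomorphism. For the singular locus I would apply the Jacobian criterion on both charts: in the chart $u=U/V$, expanding $h=h(c)+h_1t+O(t^2)$ near a non-pole and setting $w=u-h(c)^{1/p}$, the defining equation becomes $w^p=h_1t+O(t^2)$, regular at the origin iff $h_1\neq 0$, iff $h$ is unramified at $c$; at a pole of order $n$, switching to the chart $v=V/U$ and clearing denominators yields $t^n-h_0v^p=0$, regular iff $n=1$, again iff $h$ is unramified.

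For the remainder of (3), $\Pi_3$ is vertical by construction, and $\Pi$ is reduced by the normality of $X_0$ together with Proposition~\ref{Property}, so $\Pi_3=p_1^*D$ for a reduced divisor $D$ on $C$. Summing the three pieces gives $\cO_Z(\Pi)=\cO(p+1)\otimes p_1^*\cO_C(A+D)$. Parity of $\alpha+d$ is then forced by $\cO_Z(\Pi)=\cL^{\otimes 2}$ (Construction~\ref{Ls}): using the splitting $\Pic(Z)=\mathbb{Z}\cdot\cO(1)\oplus p_1^*\Pic(C)$, the vertical component $\cO_C(A+D)$ must itself be $2$-divisible in $\Pic(C)$, hence of even degree. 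Finally, (4) is a direct application of Corollary~\ref{genus}(3): using $\cO(1)^2=0$ on $Z=\mathbb{P}(\cO_C^{\oplus 2})$ and $K_Z=-2\cO(1)+p_1^*K_C$, one computes $\Pi^2=2(p+1)(\alpha+d)$ and $\Pi\cdot K_Z=(p+1)(2q-2)-2(\alpha+d)$; substituting these together with $\chi(\cO_Z)=1-q$ into $\chi(\cO_{X_0})=2\chi(\cO_Z)+(\Pi^2+2\Pi\cdot K_Z)/8$ yields the stated formula after elementary simplification. The only mildly delicate step is the analysis of $\Pi_2$ at the poles of $h$ in (2), where one must pass to the complementary chart and clear denominators before applying the Jacobian criterion; everything else is essentially bookkeeping.
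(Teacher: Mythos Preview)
Your proof is correct and follows essentially the same approach as the paper, which simply states that the proposition is obtained ``with some local computations'' and that part (4) comes from Corollary~\ref{genus}(3); you have merely written out those local computations in detail. Your intersection calculations on the two charts, the Jacobian-criterion analysis of $\Pi_2$, the parity argument via the splitting of $\Pic(Z)$, and the intersection-number computation feeding into Corollary~\ref{genus}(3) are all accurate.
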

Here we note that the last statement comes from Corollary~\ref{genus-c}.

We are going to run the canonical resolution of singularities (Definition \ref{canonical}) to $X_0\rightarrow Z$ to obtain $\chi(\cO_X)$. We first need to analyze the singularities of $\Pi$.  From the above proposition, non-negligible singularities of $\Pi$ are all lying on $\Pi_2$. Since $\Pi_{2}$ is homeomorphic to $C$ via $p_{1}$, we shall  use following conventions:  if $b_{2}\in \Pi_2$ is a singularity of $\Pi$, we divide it into one of the $4$ types below according to its image $b:=p_1(b_2)\in C$, and use the notation $\xi_{b}$ to denote $\xi_{b_{2}}$ (see Definition \ref{canonical} and Definition \ref{num}, here the flat double cover is taken to be $X_0\to Z$). The $4$ types of singularities are:
\begin{enumerate}
\item[Type I] : $b\notin(A\cup D)$ and $b$ is a ramification of $h$.
The local function of $\Pi$ near $b_{2}$ is $u^p-h$ in $\mathcal{O}_{b,C}[u]$.  

\item[Type II]: $b\in D\backslash A$.
The local function of $\Pi$ near $b_{2}$ is $t(u^p-h)$ in $\mathcal{O}_{b,C}[u]$, where $t$ is a uniformizer of $\mathcal{O}_{b,C}$.

\item[Type III]: $b\in A\backslash D$.
The local function of $\Pi$ near $b_{2}$ is $v(v^p-1/h)$ in $\mathcal{O}_{b,C}[v]$

\item[Type IV]: $b\in(A\cap D)$.
The local function of $\Pi$ near $b_{2}$ is $tv(v^p-1/h)$ in $\mathcal{O}_{b,C}[v]$, where $t$ is a uniformizer of $\mathcal{O}_{b,C}$.
\end{enumerate}

Denote
 
$\mathcal{S}:=\{b \ |\  b$ is of type I, II, III or IV$\}$; 

$\mathcal{T}:=\{b \ |\  b$ is of type III or IV, and $h$ is unramified or tamely ramified at $b\}$; 

$\mathcal{W}:=\{b \ |\  b$ is of type III or IV, and $h$ is wildly ramified at $b\}$.

By (\ref{difference}),
$$
\chi(\mathcal{O}_{X})=\chi(\mathcal{O}_{X_{0}})-\sum \limits_{b\in\mathcal{S}} \xi_{b}=\frac{(p-3)(q-1)}{2}+\frac{(p-1)(\alpha+d)}{4}-\sum \limits_{b\in\mathcal{S}} \xi_{b}.
$$

Set 
 $$d_{b}:=\left\{\begin{array}{cc}
 1, & b\in D; \\ 
 0, & b\notin D. \end{array}
 \right.$$ 

Then \begin{equation}\label{fine}
\chi(\cO_X)=\frac{(p-3)(q-1)}{2}+\frac{(p-1)\alpha}{4}+\sum\limits_{b\in \mathcal{S}} (\frac{(p-1)d_b}{4}-\xi_b).
\end{equation}


Next we study in detail these four kinds of singularities.  We will find a relation between $\frac{(p-1)d_b}{4}-\xi_b$ and $R_b(h)$ for all $b$. 
In order to do this, we give a definition as follows. 
\begin{Def}
Suppose $b\in C$ is a closed point, $t\in \cO_{b,C}$ is a uniformizer, $v$ is the canonical discrete valuation and $e\in t\cO_{b,C}\backslash \cO_{b,C}^p$, we consider an arbitrary flat double cover  $S_0\to Y_0:=\Spec(\cO_{b,C}[x])$ with branch divisor $B_0=\divi(x^p-e)$ (resp. $\divi(t(x^p-e))$, $\divi(x(x^p-e))$, $\divi(tx(x^p-e))$). Let $Q$ denote the point $(x,t)$ of $\Spec(\cO_{b,C})[x]$, then we define the number $\xi_{I, e}$ (resp. $\xi_{II, e}$, $\xi_{III, e}$, $\xi_{IV, e}$) to be $\xi_Q$ with respect to this flat double (see Definition~\ref{num}).  
\end{Def}

Note that by definition we have that  $\xi_b=\xi_{*,e}$ for some $e$ such that $\Lambda_b(e)=\Lambda_{b}(h)$ (see Definition \ref{notation}), here $*$ is the type of $b$ (\ie \ I, II, III or IV).

Note also that if $R_b(e)\ge p$ (see Definition \ref{notation}), then $e=t^p(\lambda_1+e_1)$ for a unique $\lambda_1\in k$ and $e_1\in t\cO_{b,C}$. In particular $R_b(e_1)=R_b(e)-p$, and $\lambda_1\neq 0$ if and only if $v(e)=p$.  If we blow up $y_0=Q$ to obtain the first step of the canonical resolution (see Definition \ref{canonical}), we can obtain a recursion relation as follows.
\begin{Lem} \begin{enumerate}\label{recursions}
\item If $R_b(e)\ge p$, then 
\begin{equation}\label{recursion Mild I}
\begin{split}
\xi_{I,e}=\frac{(p-1)(p-3)}{8}+\xi_{II, e_1}, \ \xi_{II,e}=\frac{(p-1)(p+1)}{8}+\xi_{I,e_1};
\end{split}
\end{equation}\label{recursion Mild II}
\item If $R_b(e)\ge p$ and $v(e)>p$, then \begin{equation}
\begin{split}
\xi_{III,e}=\frac{(p-1)(p+1)}{8}+\xi_{III, e_1}, \
\xi_{IV,e}=\frac{(p-1)(p+1)}{8}+\xi_{IV, e_1}.
\end{split}\end{equation}
\item If $R_b(e)\ge p$ and $v(e)=p$,  then 
\begin{equation}\label{recursion Wild}
\begin{split}
\xi_{III,e}=\frac{(p-1)(p+1)}{8}+\xi_{I,e_1},\  \xi_{IV,e}=\frac{(p-1)(p+1)}{8}+\xi_{II,e_1}.
\end{split}
\end{equation}
\end{enumerate}
\end{Lem}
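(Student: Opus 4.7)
The plan is to perform one step of the canonical resolution of Definition~\ref{canonical} on each of the four local double covers and to read off the claimed recursion from \eqref{equation on R^1}--\eqref{difference}. The hypothesis $R_b(e)\ge p$ is precisely what allows us to write $e=t^p(\lambda_1+e_1)$ with $\lambda_1\in k$ and $e_1\in t\cO_{b,C}$, and $\lambda_1\ne 0$ is equivalent to $v(e)=p$.

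First I would compute the multiplicity $m_0=\mathrm{mult}_Q B_0$ at the singular point $Q=(x,t)$ by inspecting the lowest-degree term of the local equation of $B_0$: one finds $m_0=p,\,p+1,\,p+1,\,p+2$ for Types I--IV respectively, so $l_0=\lfloor m_0/2\rfloor$ equals $(p-1)/2$ for Type I and $(p+1)/2$ for the other three. The contribution $l_0(l_0-1)/2$ from the first blowup is therefore $(p-1)(p-3)/8$ in Type I and $(p-1)(p+1)/8$ in the remaining three, matching the first summand in each of the six formulas. Next, I would blow up $Q$ and work in the affine chart $(\tilde x,t)$ with $x=\tilde x t$: the strict transform of $x^p-e$ equals $\tilde f:=\tilde x^p-\lambda_1-e_1$, and the formula $B_1=\sigma^*B_0-2l_0E$, combined with the extra factor $t$ (in Types II and IV) and the factorisation $x=\tilde x t$ (in Types III and IV), gives an explicit expression for $B_1$ in this chart. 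A direct inspection of the complementary chart $(x,\tilde t)$ with $t=x\tilde t$ shows that in Types I--III the strict transform misses the exceptional at the new chart's origin, while in Type IV it meets $E$ only in a transverse crossing of two smooth branches, i.e.\ a negligible singularity of the first kind (Definition~\ref{neglect}) contributing $0$ to $\xi$.

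Finally, the surviving non-negligible singularity of $B_1$ lies in the chart $(\tilde x,t)$: when $\lambda_1\ne 0$ it is at $(\mu,0)$ with $\mu=\lambda_1^{1/p}$, and the change of variable $y=\tilde x-\mu$ together with $(y+\mu)^p=y^p+\lambda_1$ reduces $\tilde f$ to $y^p-e_1$; when $\lambda_1=0$ it is at the origin and $\tilde f=\tilde x^p-e_1$ already. Reading off $B_1$ then identifies the new singularity with one of the four standard forms in the parameter $e_1$, giving the transitions $\mathrm{I}\to\mathrm{II}$ and $\mathrm{II}\to\mathrm{I}$ for any $\lambda_1$ (item (1)); $\mathrm{III}\to\mathrm{I}$ and $\mathrm{IV}\to\mathrm{II}$ when $\lambda_1\ne 0$ (item (3)); and $\mathrm{III}\to\mathrm{III}$ and $\mathrm{IV}\to\mathrm{IV}$ when $\lambda_1=0$, the factor $\tilde x$ persisting because $\mu=0$ (item (2)). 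Adding the blowup contribution $l_0(l_0-1)/2$ to $\xi_{\bullet,e_1}$ completes the proof. The main technical nuisance is Type IV, where $E\subset B_1$ (since $m_0-2l_0=1$); one must carefully check in both charts that every point of $E\cap\widetilde B_0$ other than the one contributing $\xi_{\mathrm{II},e_1}$ or $\xi_{\mathrm{IV},e_1}$ is a transverse meeting of two smooth branches, hence negligible and so safely discarded.
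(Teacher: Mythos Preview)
Your proposal is correct and follows essentially the same approach as the paper: perform one step of the canonical resolution, compute $m_0$, $l_0$, and the contribution $(l_0^2-l_0)/2$ for each of the four types, then identify the new branch divisor $B_1$ in the chart $x=\tilde x\,t$ and check that the complementary chart carries only negligible singularities. The paper compresses exactly this computation into Tables~1 and~2 (for $\lambda_1=0$ and $\lambda_1\ne 0$) and records that outside $\Spec(\cO_{b,C}[x/t])$ one has at worst negligible singularities; your prose spells out the same verifications in more detail. One tiny wording issue: in Type~II the strict transform does meet the origin of the $(x,\tilde t)$-chart (the branch coming from $\{t=0\}$ becomes $\{\tilde t=0\}$), but it is smooth there and $E\not\subset B_1$, so your conclusion that no non-negligible singularity arises is still correct.
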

\begin{proof}
According to the process of canonical resolution, after blowing-up we can get the two tables (TABLE 1 \& 2) below, everything then follows from the table. We remark that it is clear outside the open subset $\Spec(\cO_{b,C}[x/t])$, $B_1$ could have at worst negligible singularities.
\end{proof}

\begin{center}
\begin{table}[!htp]
\begin{tabular}{|c|c|c|c|c|c|c|}
\hline
 &  $m_0$ & $l_0$ & equation of $B_1$ on   &  equation of & $(l_0^2-l_0)/2$\\
& & &$\Spec(\cO_{b,C}[x/t])$  &  singularities &\\
\hline
I  & $p$ & $\dfrac{p-1}{2}$ & $t((x/t)^p-e_1) $ &   $t((x/t)^p-e_1)$ & $\dfrac{(p-1)(p-3)}{8}$\\
\hline
II  & $p+1$ & $\dfrac{p+1}{2}$ & $(x/t)^p-e_1$  &  $(x/t)^p-e_1$ & $\dfrac{(p-1)(p+1)}{8}$\\
\hline
III & $p+1$ & $\dfrac{p+1}{2}$ & $(x/t)((x/t)^p-e_1)$  & $(x/t)((x/t)^p-e_1)$ & $\dfrac{(p-1)(p+1)}{8}$\\
\hline
IV  & $p+2$ & $\dfrac{p+1}{2}$ & $t(x/t)((x/t)^p-e_1)$  & $t(x/t)((x/t)^p-e_1)$ & $\dfrac{(p-1)(p+1)}{8}$\\
\hline
\end{tabular}
\medskip 
\caption{table of $\lambda_1=0$.}
\end{table}
\begin{table}[!htp]
\begin{tabular}{|c|c|c|c|c|c|c|}
\hline
 &  $m_0$ & $l_0$ & equation of $B_1$ on    &  equation of  & $(l_0^2-l_0)/2$\\
& & & $\Spec(\cO_{b,C}[x/t])$ &  singularities &\\
\hline
I  & $p$ & $\dfrac{p-1}{2}$ & $t((x/t-\lambda_1^{1/p})^p-e_1) $ &   $t((x/t-\lambda_1^{1/p})^p-e_1)$ & $\dfrac{(p-1)(p-3)}{8}$ \\
\hline
II  & $p+1$ & $\dfrac{p+1}{2}$ & $(x/t-\lambda_1^{1/p})^p-e_1 $  &  $(x/t-\lambda_1^{1/p})^p-e_1$&$\dfrac{(p-1)(p+1)}{8}$ \\
\hline
III & $p+1$ & $\dfrac{p+1}{2}$ & $(x/t)((x/t-\lambda_1^{1/p})^p-e_1)$  & $(x/t-\lambda_1^{1/p})^p-e_1$ &$\dfrac{(p-1)(p+1)}{8}$\\
\hline
IV  & $p+2$ & $\dfrac{p+1}{2}$ & $t(x/t)((x/t-\lambda_1^{1/p})^p-e_1)$  & $t((x/t-\lambda_1^{1/p})^p-e_1)$ &$\dfrac{(p-1)(p+1)}{8}$\\
\hline
\end{tabular}\hfill
\medskip 
\caption{table of $\lambda_1\neq 0$.}
\end{table}
\end{center}

\begin{Lem}\label{104}\begin{enumerate}
\item The number $\xi_{*,e}$ depends on the ramification type $\Lambda_{b}(e)$ (see Definition~\ref{notation}) rather than $e$ itself.
\item If $*$ is I or II, then $\xi_{*,e}$ depends on $R_b(e)$ only.
\end{enumerate}
Since $\xi_{*,e}$ depends on the ramification type $\Lambda_b(e)$ rather than $e$, we shall also write $\xi_{*,\Lambda}$ to denote $\xi_{*,e}$ for those $e$ with $\Lambda_b(e)=\Lambda$.
\end{Lem}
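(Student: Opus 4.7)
I plan to prove both parts simultaneously by induction on $R_b(e)$, with the recursion formulas of Lemma~\ref{recursions} as the main engine. Recall that $R_b(e)=v(de)$ by the very definition, and that $\Lambda_b(e)$ is either $\{R_b(e)\}$ or $\{v(e),R_b(e)\}$ according as $e$ is tamely or wildly ramified at $b$. The key computation driving the induction is this: when $R_b(e)\geq p$ and $e=t^p(\lambda_1+e_1)$ with $e_1\in t\cO_{b,C}$, the identity $de=t^p\,de_1$ (valid in characteristic $p$) gives $R_b(e_1)=R_b(e)-p$; moreover, in the subcase $v(e)>p$ one has $\lambda_1=0$ and $v(e_1)=v(e)-p$, so the entire ramification type $\Lambda_b(e_1)$ is determined by $\Lambda_b(e)$.

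For the base case $R_b(e)<p$, I would first note that $e$ is necessarily tame: if $p\mid v(e)>0$, then expanding $e=ct^{v(e)}+\cdots$ yields $v(de)\geq v(e)\geq p$, contradicting $R_b(e)<p$. In the tame case $v(e)=R_b(e)+1$, so $\Lambda_b(e)=\{R_b(e)\}$ is determined by $R_b(e)$ alone. Writing $e=ut^v$ for $v=v(e)$ and a unit $u$, and invoking Hensel's lemma together with the algebraic closure of $k$ and $\gcd(p,v)=1$, I can find a change of uniformizer $t\to t'$ after which $e=(t')^v$; the remaining branch components $\divi(t)$ and/or $\divi(x)$ appearing in types II--IV are unaffected by such a change. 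One thereby reduces to the standard branch divisors $x^p-t^v$, $t(x^p-t^v)$, $x(x^p-t^v)$, $tx(x^p-t^v)$, whose canonical resolutions depend only on $v=R_b(e)+1$. A direct computation on these standard models then yields both statements in the base case.

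For the inductive step $R_b(e)\geq p$, I would treat the four types via Lemma~\ref{recursions}. For types I and II the recursion exchanges I and II while replacing $e$ by $e_1$, so by induction $\xi_{*,e_1}$ depends only on $R_b(e_1)=R_b(e)-p$, hence $\xi_{*,e}$ depends only on $R_b(e)$; this is part~(2) for types I and II, and a fortiori part~(1). For types III and IV with $v(e)>p$, the recursion preserves the type and $\Lambda_b(e_1)$ is determined by $\Lambda_b(e)$ by the key computation, so induction gives part~(1). For types III and IV with $v(e)=p$, the recursion sends us into types I and II respectively; by the already-established part~(2), $\xi_{*,e_1}$ depends only on $R_b(e_1)=R_b(e)-p$, which is in turn determined by $\Lambda_b(e)$, yielding part~(1) in this final case. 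The main obstacle is the base case: one must carefully verify that the Hensel change of uniformizer leaves the canonical resolution intact, and then explicitly compute the resolutions of the standard double covers branched along $x^p-t^v$ (and the type II--IV analogues) to read off $\xi_{*,e}$ as a function of $v$; once this is done, the inductive step is a clean combinatorial consequence of Lemma~\ref{recursions}.
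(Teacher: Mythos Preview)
Your proposal is correct and follows essentially the same approach as the paper's proof: induction on $R_b(e)$, with Lemma~\ref{recursions} driving the inductive step and the case $R_b(e)<p$ as the base. The paper's proof is extremely terse --- it simply observes that $\xi_{*,e}$ is determined by $\xi_{*,e_1}$ together with whether $\lambda_1=0$, notes that $\Lambda_b(e)$ carries exactly the same recursive information, and declares the base case ``clear'' --- whereas you spell out the base case via the Hensel change of uniformizer reducing to the standard branch divisors $x^p-t^v$ (and their type II--IV analogues), and you make explicit the logical point that part~(2) for types I/II must be in hand before handling the $v(e)=p$ branch of types III/IV; this is a genuine clarification of what the paper leaves implicit, but not a different method.
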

\begin{proof}
By Lemma \ref{recursions} if $R_b(e)\ge p$ then $\xi_{*,e}$  is determined by 
$\xi_{*, e_1}$ and whether $\lambda_1=0$ or not. However it is clear that the ramification type of $\Lambda_b(e)$ is also determined by $\Lambda_b(e_1)$ and whether $\lambda_1=0$ or not,  so the our lemma is true if it is true for cases $R_b(e)< p$,  the latter is  clear.
\end{proof}

\begin{Lem}\label{from recursions to inequalities}  
\begin{enumerate}[a)]
\item For any $e$, we have \begin{align}
\label{100}\frac{(p-1)^2R_b(e)}{8p}-\xi_{I,e}&\ge 0;\\
\label{101}\frac{(p-1)^2R_b(e)}{8p}+\frac{p-1}{4}-\xi_{II,e}&\ge 0;
\end{align}
\item For any $e$ with  tame ramification, we have
\begin{align}
\label{102}\frac{(p-1)(p+1)R_b(e)}{8p}-\xi_{III,e}&\ge -\frac{p-1}{4p};\\
\label{103}\frac{(p-1)(p+1)R_b(e)}{8p}+\frac{p-1}{4}-\xi_{IV,e}&\ge -\frac{p-1}{4p};
\end{align}
\item If $e$ has  wild ramification and $\Lambda_b(e)=\{pj,R_b(e)\}$, then 
\begin{align}
\frac{(p-1)^2R_b(e)}{8p}-\xi_{III,e}\ge -\frac{(p-1)j}{4};\\
\frac{(p-1)^2R_b(e)}{8p}+\frac{p-1}{4}-\xi_{IV,e}\ge -\frac{(p-1)j}{4}.
\end{align}\end{enumerate}
\end{Lem}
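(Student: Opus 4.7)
The plan is a strong induction on $R_b(e)$. When $R_b(e)\geq p$, every inequality follows immediately from Lemma~\ref{recursions} together with the inductive hypothesis. Writing $e=t^p(\lambda_1+e_1)$ as in the lemma, the characteristic $p$ identity $de=t^p\,de_1$ gives $R_b(e_1)=R_b(e)-p$, so each recursion expresses $\xi_{*,e}$ as an explicit constant plus $\xi_{*',e_1}$, and substituting the inductive bound for $\xi_{*',e_1}$ yields precisely the desired bound for $\xi_{*,e}$ (with equality). For example, for \eqref{100}, the recursion together with \eqref{101} applied to $e_1$ gives
\[
\xi_{I,e}=\tfrac{(p-1)(p-3)}{8}+\xi_{II,e_1}\;\leq\; \tfrac{(p-1)(p-3)}{8}+\tfrac{(p-1)^2(R_b(e)-p)}{8p}+\tfrac{p-1}{4}\;=\;\tfrac{(p-1)^2R_b(e)}{8p}.
\]
The verifications for \eqref{101}--\eqref{103} and for part (c) are entirely analogous: for tame types III/IV, recursion (2) preserves the type and the bound propagates exactly; for wild $v(e)>p$, recursion (2) decreases $j$ by one, matching the shift in $(p-1)j/4$; and for the wild case $v(e)=p$ (hence $j=1$), recursion (3) transitions from types III/IV to I/II, and the gap between the leading coefficients $(p-1)(p+1)/(8p)$ and $(p-1)^2/(8p)$ is exactly what the offset $(p-1)/4$ absorbs.

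For the base case $R_b(e)<p$, wild ramification is ruled out (if $e=t^{pj}u$ then $de/dt=t^{pj}u'$ gives $R_b(e)\geq pj\geq p$), so $e$ is tame with $v(e)=R_b(e)+1\in\{1,\ldots,p-1\}$. Since $p\nmid v(e)$, Hensel's lemma applied in $\widehat{\cO}_{b,C}$ to $X^{v(e)}-u(0)^{-1}u=1$ (where $e=t^{v(e)}u$) produces a new uniformizer $\tilde t$ with $e=\tilde t^{v(e)}$, so after renaming we may assume $e=t^n$ with $n\in\{1,\ldots,p-1\}$. One then runs the canonical resolution on each of the four types directly. For type~I, $B_0=\divi(x^p-t^n)$ has multiplicity $n$ at the origin; after the first blow-up, the only non-negligible singular point lies in the chart $w=t/x$ with local equation $x^{p-n}-w^n$, and iterating this step implements a ``slow Euclidean algorithm'' on the pair $(p,n)$. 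Summing the contributions $(l_i^2-l_i)/2$ along the chain produces $\xi_{I,t^n}$ in closed form, and a direct estimate confirms $\xi_{I,t^n}\leq (p-1)^2(n-1)/(8p)$. Types~II, III and IV differ from type~I only by one or two additional transverse or tangential branches through $Q$ and are resolved by the same procedure.

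The principal obstacle is the base case. Since every inductive step above is tight (equality throughout), all the slack in the final inequalities must come from the explicit base verification. The bookkeeping for the iterated blow-up chains is routine but lengthy; a cleaner approach is to run a secondary induction within the base case on $\max(n,p-n)$, using mini-recursions analogous to those of Lemma~\ref{recursions} but valid in the low-multiplicity regime, which reduces the combinatorics to an essentially one-line Euclidean descent.
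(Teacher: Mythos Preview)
Your proposal is correct and follows essentially the same route as the paper. The paper organizes it slightly differently: for parts a) and b) it invokes Lemma~\ref{104} to replace $e$ by $t^{R_b(e)+1}$ and then applies Proposition~\ref{App 1} directly (for all $n$ coprime to $p$, not just $n<p$), while for part c) it applies the recursions of Lemma~\ref{recursions} $j$ times in one stroke to land back in part a). Your outer induction on $R_b(e)$ simply interleaves these two ingredients, and your base case is exactly the content of Proposition~\ref{App 1} restricted to $m=p$, $n<p$.

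One point does need sharpening. Your proposed ``secondary induction on $\max(n,p-n)$'' with $m=p$ fixed cannot be carried out as stated: a single blow-up replaces the branch $x^at^b(x^p-t^n)$ by one of the shape $x^{a'}w^b(x^{p-n}-w^n)$, so the first exponent drops from $p$ to $p-n$ and you are no longer in the ``$m=p$'' world. Tracking the Euclidean descent therefore forces you to prove the bound for \emph{all} odd $m$ simultaneously, by induction on $m$; this is precisely what the paper does in the Appendix (Lemma~\ref{App 2}, Lemma~\ref{App 3}, and the proof of Proposition~\ref{App 1}). Once you reformulate your base case that way, your argument and the paper's coincide.
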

\begin{proof}
The previous lemma reduces our statements a) and b) to Proposition~\ref{App 1} below. For c), we assume $\Lambda(e)=\{pj,R_b(e)\}$, then $e=t^{pj}(\lambda+e')$, with $\lambda\neq 0$. Hence by Lemma \ref{recursions} we have:
\begin{align*}
\frac{(p-1)^2R_b(e)}{8p}-\xi_{III,e}&=\frac{(p-1)^2R_b(e')}{8p}-\xi_{I,e'}+\frac{(p-1)^2j}{8}-\frac{(p+1)(p-1)j}{8}\\&=(\frac{(p-1)^2R_b(e')}{8p}-\xi_{I,e'})-\frac{(p-1)j}{4}\\&\ge -\frac{(p-1)j}{4},
\end{align*}
and similarly $$\frac{(p-1)^2R_b(e)}{8p}+\frac{p-1}{4}-\xi_{IV,e}\ge -\frac{(p-1)j}{4}.$$
\end{proof}
\begin{Lem}\label{formula of A}
$\alpha= \sum \limits_{b\in\mathcal{T}}(R_{b}(h)+1)+\sum \limits_{b\in\mathcal{W}}(pj_b(h))$.
\end{Lem}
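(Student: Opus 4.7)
The plan is to identify $\mathcal{T}\sqcup\mathcal{W}$ with the set of closed points in the support of $A=h^{*}(\infty)$ and then to expand $\alpha=\deg A$ as a sum of ramification indices, translated into the notation of Definition~\ref{notation}.

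First I would observe that, by the classification of singularity types I--IV set up just before Lemma~\ref{recursions}, a singular point $b_{2}$ of $\Pi$ with image $b\in C$ is of type III exactly when $b\in A\setminus D$ and of type IV exactly when $b\in A\cap D$. Hence $\mathcal{T}\sqcup\mathcal{W}$, which by definition is the union of all type III and type IV points partitioned according to whether $h$ is wildly ramified at $b$ or not, is exactly the set of closed points in the support of $A$.

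Next I would use the identity $\alpha=\deg h^{*}(\infty)=\sum_{b\in\mathrm{Supp}(A)} e_{b}(h)$, where $e_{b}(h)$ denotes the ramification index of $h$ at $b$, and translate $e_{b}(h)$ into the quantities appearing in the formula. For $b\in\mathcal{T}$, the morphism $h$ is tame (or unramified) at $b$; writing $s,t$ for uniformizers of $\mathcal{O}_{\infty,\mathbb{P}^{1}}$ and $\mathcal{O}_{b,C}$ and $h^{*}s=u t^{e_{b}(h)}$ with $u$ a unit, a computation of $d(h^{*}s)=e_{b}(h)u t^{e_{b}(h)-1}dt+t^{e_{b}(h)}du$ gives $(\Omega_{C/\mathbb{P}^{1}})_{b}\simeq \mathcal{O}_{C,b}/(t^{e_{b}(h)-1})$, since $p\nmid e_{b}(h)$ makes $e_{b}(h)u$ a unit; hence $R_{b}(h)=\dim_{k}(\Omega_{C/\mathbb{P}^{1}})_{b}=e_{b}(h)-1$. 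For $b\in\mathcal{W}$, the ramification is wild, so $p\mid e_{b}(h)$, and by Definition~\ref{notation} we have $pj_{b}(h)=v(s)=v(u t^{e_{b}(h)})=e_{b}(h)$.

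Summing over $b$ yields the identity $\alpha=\sum_{b\in\mathcal{T}}(R_{b}(h)+1)+\sum_{b\in\mathcal{W}}pj_{b}(h)$. There is no substantial obstacle in this argument: the lemma is essentially a repackaging of the classical formula $\deg\phi=\sum_{d\in\phi^{-1}(c)}e_{d}$ in the ramification-type notation introduced earlier, once one checks that the type III/IV stratification of the singular points of $\Pi$ recovers precisely $\mathrm{Supp}(A)$.
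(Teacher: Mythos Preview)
Your proof is correct and follows the same approach as the paper's own proof, which simply writes $A=\sum_{b\in\mathcal T}(R_b(h)+1)b+\sum_{b\in\mathcal W}pj_b(h)b$ ``by definition'' and takes degrees. You have spelled out explicitly why the coefficient of $b$ in $A=h^*(\infty)$ equals $R_b(h)+1$ in the tame case and $pj_b(h)$ in the wild case, and why $\mathcal T\sqcup\mathcal W$ coincides with $\mathrm{Supp}(A)$; this is exactly the content the paper leaves implicit.
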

\begin{proof}
By definition $$A=\sum\limits_{b\in \mathcal{T}}(R_b(h)+1)b+\sum\limits_{b\in\mathcal{W}}pj_b(h)b.$$ Taking degree we obtain our lemma.
\end{proof}
\begin{Thm}\label{evidence}
Under the assumption $g=(p-1)/2$ and $X_\eta$ being quasi-hyperelliptic, we have $\chi(\cO_X)\ge (p^2-4p-1)(q-1)/4p$.
\end{Thm}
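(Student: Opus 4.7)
The plan is to combine the exact formula \eqref{fine} for $\chi(\cO_X)$ with the type-wise lower bounds from Lemma~\ref{from recursions to inequalities}, using Hurwitz's formula for $h\colon C\to\mathbb{P}^1_k$ and Lemma~\ref{formula of A} in concert. A direct subtraction, together with the elementary identity $\frac{(p-3)(q-1)}{2} - \frac{(p^2-4p-1)(q-1)}{4p} = \frac{(p-1)^2(q-1)}{4p}$, reduces the claim to the inequality
\[
\frac{(p-1)^2(q-1)}{4p} + \frac{(p-1)\alpha}{4} + \sum_{b\in\mathcal{S}} \left(\frac{(p-1)d_b}{4} - \xi_b\right) \ge 0.
\]

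Hurwitz's formula \eqref{Hurwitz} applied to $h$ gives $\sum_b R_b(h) = 2(q-1) + 2\alpha$, and the key observation is that every ramification point of $h$ automatically belongs to $\mathcal{S}$ (Type I when outside $A\cup D$, Types II, III, or IV otherwise), so in fact $\sum_b R_b(h) = \sum_{b\in\mathcal{S}} R_b(h)$. Using this to eliminate $q-1$ rewrites the first term as $\frac{(p-1)^2}{8p}\sum_{b\in\mathcal{S}} R_b(h) - \frac{(p-1)^2\alpha}{4p}$, and the $\alpha$-piece combines with $\frac{(p-1)\alpha}{4}$ to leave the clean remainder $\frac{(p-1)\alpha}{4p}$.

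Next I apply Lemma~\ref{from recursions to inequalities} case by case. Every one of the six cases yields a common contribution $-\frac{(p-1)^2 R_b(h)}{8p}$ which exactly absorbs the $\frac{(p-1)^2}{8p}\sum_{b\in\mathcal{S}} R_b(h)$ coming from Hurwitz. What remains is a deficit $-\frac{(p-1)R_b(h)}{4p} - \frac{p-1}{4p}$ at each $b\in\mathcal{T}$, a deficit $-\frac{(p-1)j_b(h)}{4}$ at each $b\in\mathcal{W}$, and the positive term $\frac{(p-1)\alpha}{4p}$. Substituting $\alpha = \sum_{b\in\mathcal{T}}(R_b(h)+1) + \sum_{b\in\mathcal{W}} pj_b(h)$ from Lemma~\ref{formula of A} shows that every surviving contribution cancels term by term, leaving exactly $\ge 0$.

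The principal difficulty is the bookkeeping: orchestrating the six sub-cases of Lemma~\ref{from recursions to inequalities} so that the $R_b(h)$-terms, $j_b(h)$-terms, and scalar offsets each match against the pieces produced by Hurwitz and by Lemma~\ref{formula of A}. The coefficient $(p^2-4p-1)/4p$ is precisely the unique value for which this cancellation is exact; Raynaud's surfaces (Subsection~\ref{Raynaud's example}) realize equality, providing a built-in sharpness check.
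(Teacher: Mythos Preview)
Your argument is correct and follows essentially the same route as the paper's proof: both combine formula~\eqref{fine}, the type-wise bounds of Lemma~\ref{from recursions to inequalities}, Lemma~\ref{formula of A}, and Hurwitz's formula for $h$, and the cancellation you describe is precisely the one the paper performs. The only difference is cosmetic ordering---you subtract the target $(p^2-4p-1)(q-1)/4p$ at the outset and verify nonnegativity, whereas the paper first bounds $\tfrac{(p-1)d}{4}-\sum_{b\in\mathcal S}\xi_b$ and then substitutes Hurwitz at the end; the underlying algebra is identical.
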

\begin{proof}
By Equation (\ref{fine}) $$\chi(\cO_X)=\frac{(p-3)(q-1)}{2}+\frac{(p-1)(\alpha+d)}{4}-\sum \limits_{b\in\mathcal{S}} \xi_{b}.$$ 
Lemma \ref{from recursions to inequalities} and Lemma~\ref{formula of A} show that 
\begin{align*}
\frac{(p-1)d}{4}-\sum\limits_{b\in \mathcal{S}}\xi_b&\ge -\sum\limits_{b\in \mathcal{S}}\frac{(p-1)^2R_b(h)}{8p}-\sum\limits_{b\in \mathcal{T}}\frac{(p-1)(R_b(h)+1)}{4p}-\sum\limits_{b\in \mathcal{W}}\frac{(p-1)j}{4}\\&=-\sum\limits_{b\in \mathcal{S}}\frac{(p-1)^2R_b(h)}{8p}-\frac{(p-1)\alpha}{4p}.
\end{align*}
Hence \begin{align*}
\chi(\cO_X)&=\frac{(p-3)(q-1)}{2}+\frac{(p-1)(\alpha+d)}{4}-\sum \limits_{b\in\mathcal{S}} \xi_{b}\\
&\ge \frac{(p-3)(q-1)}{2}+\frac{(p-1)\alpha}{4}-\sum\limits_{b\in \mathcal{S}}\frac{(p-1)^2R_b(h)}{8p}-\frac{(p-1)\alpha}{4p}\\
&=\frac{(p^2-4p-1)(q-1)}{4p},
\end{align*} by Hurwitz's formula:
\begin{equation}
2\alpha+2(q-1)=\sum\limits_{b\in \mathcal{S}} R_b(h).
\end{equation}
\end{proof}
\begin{Cor}
Under the assumption $g=(p-1)/2$ and $X_\eta$ being quasi-hyperelliptic, the optimal bound of $\chi/c_1^2$ is $(p^2-4p-1)/4(3p^2-8p-3)$.
\end{Cor}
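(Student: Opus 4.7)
The plan is to derive this corollary by combining Theorem~\ref{evidence} with the Grothendieck--Ogg--Shafarevich bound $c_2(X)\ge -4(q-1)$ from~(\ref{estimation of c_2}), and then to verify optimality via Raynaud's examples of Subsection~\ref{Raynaud's example}.

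For the inequality, I would first use Noether's formula $12\chi(\cO_X)=c_1^2+c_2$ together with $c_2\ge -4(q-1)$ to obtain $c_1^2\le 12\chi(\cO_X)+4(q-1)$. Theorem~\ref{evidence} rearranged reads $q-1\le \frac{4p}{p^2-4p-1}\chi(\cO_X)$, and substituting this into the preceding bound yields
\[
c_1^2 \le \Bigl(12+\frac{16p}{p^2-4p-1}\Bigr)\chi(\cO_X) = \frac{4(3p^2-8p-3)}{p^2-4p-1}\,\chi(\cO_X),
\]
which upon rearrangement is precisely $\chi(\cO_X)/c_1^2\ge (p^2-4p-1)/4(3p^2-8p-3)$.

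For optimality, I would point to the Raynaud surfaces of Subsection~\ref{Raynaud's example}: they have arithmetic fibre genus $g=(p-1)/2$, and their generic fibre is by construction a flat double cover of the generic fibre $\mathbb{P}^1_K$ of the ruled surface $\mathbb{P}(\cE)\to C$, hence is hyperelliptic and in particular quasi-hyperelliptic. The explicit computation recorded there yields $\chi(\cO_S)=(p^2-4p-1)l/8$ and $K_S^2=(3p^2-8p-3)l/2$, so $\chi(\cO_S)/K_S^2=(p^2-4p-1)/4(3p^2-8p-3)$ exactly, and the bound is saturated.

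The main effort has already been expended in Theorem~\ref{evidence}; the present statement is essentially numerical bookkeeping. The only subtle point, if any, is that the two inequalities used in the derivation must be simultaneously attained in order for the combined bound to be optimal, and this is confirmed by Raynaud's surfaces, where $c_2=-4(q-1)$ and $\chi=(p^2-4p-1)(q-1)/(4p)$ hold on the nose.
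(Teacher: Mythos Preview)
Your proposal is correct and follows essentially the same approach as the paper: combine Theorem~\ref{evidence} with the bound $c_2\ge -4(q-1)$ via Noether's formula, then invoke Raynaud's examples for sharpness. The paper presents the numerics as the single chain $\chi/K_X^2=\chi/(12\chi-c_2)\ge \chi/(12\chi+4(q-1))\ge (p^2-4p-1)/4(3p^2-8p-3)$, while you bound $c_1^2$ first and then divide---these are the same argument in slightly different algebraic dress.
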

\begin{proof}
Since $\chi(\cO_X)\ge (p^2-4p-1)(q-1)/4p$, we see that $$\frac{\chi(\cO_X)}{K_X^2}=\frac{\chi(\cO_X)}{12\chi(\cO_X)-c_2(X)}\ge \frac{\chi(\cO_X)}{12\chi(\cO_X)+4(q-1)}\ge \frac{p^2-4p-1}{4(3p^2-8p-3)}.$$ On the other hand, Raynaud's example in Subsection \ref{Raynaud's example} gives examples whose  $\chi/c_1^2$ is equal to $(p^2-4p-1)/4(3p^2-8p-3)$.
\end{proof}
\begin{Cor}\label{last}
We have $\kappa_5=1/32$.
\end{Cor}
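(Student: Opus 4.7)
The plan is to combine the $g\ge 4$ bound (Corollary~\ref{main 5,4}) with the hyperelliptic calculation of Section~5, and then to match the resulting lower bound with the value produced by Raynaud's example.

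First I would reduce to the case $g=2$. By Lemma~\ref{degree of Delta}(1) we have $(p-1)\mid 2g$, and with $p=5$ this forces $g\in\{2,4,6,\dots\}$. For $g\ge 4$ Corollary~\ref{main 5,4} already yields $\chi/c_1^2\ge 1/32$, so only $g=2$ needs attention.

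For a minimal surface of general type $X$ in characteristic $5$ with negative $c_2$ and general fibre of arithmetic genus $2$, the generic fibre $X_\eta/K$ is normal, geometrically rational (Theorem~\ref{Albanese}) and of arithmetic genus $2$, so $h^0(X_\eta,\omega_{X_\eta/K})=2$. The canonical morphism of $X_\eta$ is therefore a degree-$2$ morphism onto a degree-$1$ curve in $\mathbb{P}(H^0(X_\eta,\omega_{X_\eta/K}))=\mathbb{P}^1_K$, hence onto $\mathbb{P}^1_K$ itself; this exhibits $X_\eta$ as quasi-hyperelliptic (in fact hyperelliptic). Thus the hypotheses of Theorem~\ref{evidence} are satisfied, and with $p=5$ it gives
\[
\chi(\cO_X)\;\ge\;\frac{(p^2-4p-1)(q-1)}{4p}\;=\;\frac{q-1}{5}.
\]

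Combining this with the universal estimate $c_2(X)\ge -4(q-1)$ from \eqref{estimation of c_2}, we get $c_2(X)\ge -4(q-1)\ge -20\,\chi(\cO_X)$. Noether's formula then yields
\[
c_1^2(X)\;=\;12\chi(\cO_X)-c_2(X)\;\le\;12\chi(\cO_X)+20\chi(\cO_X)\;=\;32\,\chi(\cO_X),
\]
so $\chi/c_1^2\ge 1/32$ in the remaining case $g=2$ as well. Hence $\kappa_5\ge 1/32$.

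Finally, Raynaud's construction (Subsection~\ref{Raynaud's example}) for $p=5$ produces minimal surfaces of general type with $\chi/c_1^2=(p^2-4p-1)/4(3p^2-8p-3)=1/32$, giving the reverse inequality $\kappa_5\le 1/32$ and hence equality. The only nontrivial ingredient is the quasi-hyperelliptic identification of $X_\eta$ when $g=2$, which I expect to be the most delicate point to justify carefully, though it follows routinely from the dimension count on $H^0(\omega_{X_\eta/K})$ and geometric rationality; everything else is a direct substitution into results already proved in the paper.
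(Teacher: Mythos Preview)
Your proposal is correct and follows essentially the same route as the paper: reduce to $g=2$ via Corollary~\ref{main 5,4}, observe that a genus-$2$ generic fibre is automatically (quasi-)hyperelliptic, apply Theorem~\ref{evidence} together with \eqref{estimation of c_2} and Noether's formula to get the lower bound, and invoke Raynaud's example for the upper bound. The only difference is that the paper packages the passage from $\chi\ge (q-1)/5$ to $\chi/c_1^2\ge 1/32$ into the Corollary immediately preceding Corollary~\ref{last}, whereas you redo that short computation explicitly.
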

\begin{proof}
When $g=(p-1)/2$, $X_\eta$ is automatically hyperelliptic, hence the best bound of $\chi/c_1^2$ is $1/32$ for these surfaces. Combining this with Corollary~\ref{main 5,4}, we obtain $\kappa_5=1/32$. 
\end{proof}

\section{Appendix}
Assume  $\mathrm{char}(k)\neq 2$, $a,b\in\{0,1\}$, and $m,n\in \mathbb{N}_{+}$ are two  numbers co-prime to each other. Let $S:=\Spec(k[\![x,y,t]\!]_{(x,y,t)}/(y^2-x^at^b(x^m-t^n)))$ and  $f:\widetilde
{S}\to S$ be an arbitrary desingularization, we define $\xi(a,b,m,n):=\dim_k R^1f_*\cO_{\widetilde{S}}.$

\begin{Prop}\label{App 1}
If $2\nmid m$, then 
$$\xi(a,b,m,n)\le \frac{(m-1)^2(n-1)}{8m}+\frac{(m-1)n}{4m}a+\frac{m-1}{4}b.$$
\end{Prop}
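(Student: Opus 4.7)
The plan is to run the canonical resolution of singularities (Definition~\ref{canonical}) for the flat double cover $S \to Y := \Spec k[\![x, t]\!]$ with branch divisor $B = \divi(x^a t^b(x^m - t^n))$, and prove the bound by induction on $m + n$. The conditions $\gcd(m, n) = 1$ and $2 \nmid m$ will be preserved at each inductive step (after a variable swap when necessary), and $\xi$ is invariant under base change to $\bar k$, so I may assume $k = \bar k$.

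First I would blow up the origin $O \in Y$ to obtain $\sigma : Y_1 \to Y$ with exceptional divisor $E_0$. The multiplicity of $B$ at $O$ equals $a + b + \min(m, n)$, so $l_0 = \lfloor (a + b + \min(m, n))/2 \rfloor$, contributing $(l_0^2 - l_0)/2$ to $\xi$. I would then inspect the strict transform $B_1 := \sigma^* B - 2l_0 E_0$ on $Y_1$ chart by chart. A direct local calculation in the two standard affine charts shows that all singularities of $B_1$ lying above $O$ are negligible in the sense of Definition~\ref{neglect} except at most one, whose local equation is again of the standard form $u^{a'} v^{b'}(u^{m'} - v^{n'})$:
\begin{enumerate}[(i)]
\item If $m < n$, in the chart $x = s't$ the remaining singularity has local equation $(s')^a t^{\epsilon}((s')^m - t^{n-m})$ with $\epsilon = (a+b+m) \bmod 2$, giving new tuple $(a, \epsilon, m, n-m)$.
\item If $m > n$ and $n$ is even, in the chart $t = sx$ we obtain $x^{\epsilon} s^b(x^{m-n} - s^n)$ with $\epsilon = (a+b+n) \bmod 2$; the new tuple is $(\epsilon, b, m-n, n)$.
\item If $m > n$ and $n$ is odd, the same chart yields $x^{\epsilon} s^b(x^{m-n} - s^n)$ but now $m-n$ is even. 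Since $-1$ is a square in $\bar k$, one can rewrite as $-x^\epsilon s^b(s^n - x^{m-n})$, absorb the sign into $y$, and relabel $s \leftrightarrow x$, identifying the singularity with that of tuple $(b, \epsilon, n, m-n)$, whose first entry $n$ is odd.
\end{enumerate}
In each case the new first entry is odd, the new pair is coprime, and the new sum is strictly less than $m+n$, so the induction hypothesis applies.

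This produces the recursion $\xi(a, b, m, n) \le (l_0^2 - l_0)/2 + \xi(\text{new tuple})$. Setting
\[
F(a, b, m, n) := \frac{(m-1)^2(n-1)}{8m} + \frac{(m-1)n}{4m} a + \frac{m-1}{4} b,
\]
the proof then reduces to verifying the combinatorial inequality $(l_0^2 - l_0)/2 + F(\text{new tuple}) \le F(a, b, m, n)$ in each of the three cases. The base case is $m = 1$, where $F(a,b,1,n) = 0$: here $x - t^n$ is smooth, and a direct check shows that every blow-up in the canonical resolution of $B = x^a t^b(x - t^n)$ produces at worst a negligible singularity of first or second kind (two transverse branches, or three branches with two transverse pairs), so every $(l_i^2-l_i)/2 = 0$ and $\xi(a,b,1,n) = 0$.

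The main obstacle is the combinatorial inequality itself, which must be checked across the three cases (i)--(iii) and the four parity classes of $(a, b)$; the most delicate situation is when $a + b + \min(m,n)$ is odd, so that $\epsilon = 1$ interacts nontrivially with the linear $a, b$ terms of $F$, and in case (iii) where the variable swap permutes the exponents and the comparison must be rewritten in terms of $F(b, \epsilon, n, m-n)$. The verification is elementary but tedious and amounts to the statement that the function $F$ is tuned precisely so that its decrement along each blow-up of the canonical resolution matches $(l_0^2-l_0)/2$ up to a non-negative slack.
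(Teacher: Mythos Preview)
Your plan is correct and follows essentially the same route as the paper: both arguments run the canonical resolution to obtain the recursion of Lemma~\ref{App 2}, exploit the symmetry $\xi(a,b,m,n)=\xi(b,a,n,m)$ to keep the first exponent odd, and then verify that $F$ decreases by at least $(l_0^2-l_0)/2$ at each step. The paper packages this as induction on $m$ (Lemma~\ref{App 3} disposes of $n>m$, and for $n<m$ it swaps immediately when $n$ is odd), whereas you run a single induction on $m+n$; in your case~(iii) you blow up once \emph{before} swapping, which costs you one extra combinatorial inequality to check compared with the paper's direct swap $F(b,a,n,m)\le F(a,b,m,n)$, but this is a purely organizational difference.
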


First we point out an algorithm of calculating of $\xi(a,b,m,n)$. 
\begin{Lem}\label{App 2}
\begin{enumerate}
\item If $m=1$ or $n=1$, $\xi(a,b,m,n)=0$;
\item If $m>n>1$,  then 
$$\xi(a,b,m,n)=\left\{\begin{array}{c}
\xi(0,b,m-n,n)+(a+b+n)(a+b+n-2)/8, \\
\text{if} \ 2\mid a+b+n;\\
\xi(1,b,m-n,n)+(a+b+n-1)(a+b+n-3)/8,\\ 
\text{if} \ 2\nmid a+b+n.
\end{array}\right.$$
\item If $n>m>1$, then
$$\xi(a,b,m,n)=\left\{\begin{array}{c}
\xi(a,0,m,n-m)+(a+b+m)(a+b+m-2)/8,\\
 \text{if} \ 2\mid a+b+m;\\
\xi(a,1,m,n-m)+(a+b+m-1)(a+b+m-3)/8,\\ \text{if} \ 2\nmid a+b+m.
\end{array}\right.$$
\end{enumerate}
\end{Lem}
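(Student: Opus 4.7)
I shall prove Lemma~\ref{App 2} by running the canonical resolution of the flat double cover $S_0 \to Y_0 := \Spec k[\![x,t]\!]$ whose branch divisor is $B_0 := \divi(x^a t^b(x^m - t^n))$, and collecting the contributions $(l_i^2-l_i)/2$ from \eqref{equation of xi 1}. Parts~(2) and~(3) will come from a single blow-up of the origin, and Part~(1) serves as the base case.

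\textbf{Parts (2) and (3).} Assume $m>n>1$. Since $n<m$, the initial form of $x^m - t^n$ at the origin is $-t^n$, so $\mathrm{mult}_0 B_0 = a+b+n =: m_0$; set $l_0 := \lfloor m_0/2\rfloor$ and $\epsilon := m_0 \bmod 2$. Blow up the origin $\sigma_0: Y_1\to Y_0$. In the chart with coordinates $(x,s)$ and $t=xs$, the pullback factors as
\[
\sigma_0^*\bigl(x^a t^b(x^m-t^n)\bigr) \;=\; x^{a+b+n}\cdot s^b(x^{m-n}-s^n),
\]
so by Proposition~\ref{normalisation} the residual branch divisor after normalising $S_0\times_{Y_0}Y_1$ has local equation $x^\epsilon s^b(x^{m-n}-s^n)$ at the new origin, which is exactly the form defining $\xi(\epsilon,b,m-n,n)$ since $\gcd(m-n,n)=\gcd(m,n)=1$. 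The components $\{s=0\}$, $\{x=0\}$ (when $\epsilon=1$) and $\{x^{m-n}=s^n\}$ are each smooth off the new origin and meet pairwise only there, so no non-negligible singularities of $B$ arise elsewhere in this chart. In the chart with coordinates $(r,t)$ and $x=tr$ the pullback equals $t^{a+b+n}\cdot r^a(t^{m-n}r^m-1)$, with $t^{m-n}r^m-1$ a unit near the origin; after normalisation the residual branch reduces to $t^\epsilon r^a\cdot(\text{unit})$, which is at most a transverse pair of smooth curves, a negligible singularity in the sense of Definition~\ref{neglect}. Combining these contributions gives
\[
\xi(a,b,m,n) \;=\; \frac{l_0^2-l_0}{2} \;+\; \xi(\epsilon,b,m-n,n),
\]
and evaluating $(l_0^2-l_0)/2$ in the two parities of $m_0$ reproduces the formulas of Part~(2). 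Part~(3) is obtained from Part~(2) by the involution $(x,t,a,b,m,n)\leftrightarrow(t,x,b,a,n,m)$, which preserves both $S_0$ and hence $\xi$.

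\textbf{Part (1) and the main obstacle.} Suppose $m=1$. Then $x':= x - t^n$ is a regular parameter, and the substitution $x\mapsto x'+t^n$ is an automorphism of $k[\![x,t]\!]$ carrying $B_0$ to $\divi\bigl((x'+t^n)^a t^b x'\bigr)$. In the subcase $a=b=0$ the divisor is smooth; in the subcases $(a,b)=(0,1)$ and $(1,1)$ the factor $x'$ meets $t$ (and $x'+t^n$, when present) transversally, reducing the problem to the non-trivial subcase $a=1$, where the two smooth branches $\{x'=0\}$ and $\{x'+t^n=0\}$ meet with intersection multiplicity $n$ at the origin. The main delicacy is verifying that the iterated canonical resolution of this non-transverse meeting never produces a point of multiplicity $\geq 4$: I plan to handle this by explicit two-chart computation, showing that one blow-up strictly decreases the exponent $n$ in a surviving factor of the form $r(r-t^{n-1})$ (or $tr(r-t^{n-1})$ in the $b=1$ case), so that every $l_i\leq 1$ and hence $(l_i^2-l_i)/2=0$ throughout the resolution. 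This yields $\xi(a,b,1,n)=0$, and the case $n=1$ follows by the same symmetry used in Part~(3).
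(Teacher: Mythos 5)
Your argument is correct and is essentially the paper's own (very terse) proof written out in full: the paper simply invokes the canonical resolution of the flat double cover with branch divisor $\divi(x^at^b(x^m-t^n))$, and your single blow-up computation of $m_0=a+b+n$, the two charts, and the symmetry $\xi(a,b,m,n)=\xi(b,a,n,m)$ supply exactly the missing details. The only part you leave as a ``plan'' --- Part (1) with $a=1$ --- needs no further blow-up analysis, since after the substitution $x'=x-t^n$ the branch divisor is a union of two (or three) nonsingular branches of the shape covered by Definition~\ref{neglect}, so $\xi=0$ there immediately.
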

\begin{proof}
In fact $S$ is obtained as a flat double cover of $Y:=\Spec(k[x,t])$ with branch divisor $B=\mathrm{div}(x^at^b(x^m-t^n))$. Our lemma follows from the process of the canonical resolution(see Definition~\ref{canonical}). 
\end{proof}
\begin{Lem}\label{App 3}
Proposition~\ref{App 1} holds if it holds for all $n<m$.
\end{Lem}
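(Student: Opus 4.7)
The plan is to prove Proposition~\ref{App 1} by strong induction on $n$ with the odd integer $m$ held fixed. The base cases split into two: when $n=1$ we have $\xi(a,b,m,n)=0$ by Lemma~\ref{App 2}(1), and the claimed upper bound is manifestly non-negative, so the inequality holds trivially; when $1<n<m$, the inequality is exactly the content of the hypothesis of Lemma~\ref{App 3}. Since $\gcd(m,n)=1$ forces $n\neq m$ whenever $m>1$, the only remaining range is $n>m>1$, which is where the induction has to do real work.

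For $n>m>1$, Lemma~\ref{App 2}(3) gives
\[
\xi(a,b,m,n)=\xi(a,b',m,n-m)+\epsilon,
\]
where $(b',\epsilon)=(0,\,(a+b+m)(a+b+m-2)/8)$ if $a+b+m$ is even, and $(b',\epsilon)=(1,\,(a+b+m-1)(a+b+m-3)/8)$ if $a+b+m$ is odd. Since $\gcd(m,n-m)=\gcd(m,n)=1$ and $1\le n-m<n$, the inductive hypothesis supplies
\[
\xi(a,b',m,n-m)\le \frac{(m-1)^2(n-m-1)}{8m}+\frac{(m-1)(n-m)}{4m}a+\frac{m-1}{4}b'.
\]

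Subtracting this from the target bound for $\xi(a,b,m,n)$, the desired inequality collapses (after cancelling the terms that scale with $n$) to the single numerical inequality
\[
\epsilon\le \frac{(m-1)^2}{8}+\frac{m-1}{4}a+\frac{m-1}{4}(b-b').
\]
Verifying this is a finite case check on the four pairs $(a,b)\in\{0,1\}^2$ (with $m$ odd so that $a+b+m$ has the same parity as $a+b+1$). In each of the four cases one factors the right-hand side as $(m-1)(m-1+2a+2(b-b'))/8$ and matches it against the factored form of $\epsilon$; one finds equality throughout, so the induction closes.

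The only potential obstacle is purely bookkeeping: making sure the parity conventions of Lemma~\ref{App 2}(3) and the value of $b'$ match up correctly in each of the four sub-cases. There is no conceptual difficulty beyond that; the propagation of the bound is tight (each step achieves equality), which is precisely why the stated right-hand side is the right one.
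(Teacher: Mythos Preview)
Your proof is correct and follows essentially the same route as the paper's: both arguments reduce the case $n>m$ to the case $n-m$ via Lemma~\ref{App 2}(3) and verify that the claimed upper bound decreases by exactly the recursion constant $\epsilon$ at each step, so the slack is preserved. Your framing as a strong induction on $n$ and your explicit four-case check (with the observation that equality holds in every case) make the bookkeeping a bit more transparent than the paper's compressed display, but the content is the same.
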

\begin{proof}
Let $n=m+n'$. If $2\mid a+b+m$, then by Lemma~\ref{App 2} we have
\begin{align*}
&\frac{(m-1)^2(n-1)}{8m}+\frac{(m-1)n}{4m}a+\frac{m-1}{4}b-\xi(a,b,m,n)\\
\ge &\frac{(m-1)^2(n'-1)}{8m}+\frac{(m-1)n'}{4m}a-\xi(a,0,m,n').
\end{align*}
If $2\nmid a+b+m$, then we also have 
\begin{align*}
&\frac{(m-1)^2(n-1)}{8m}+\frac{(m-1)n}{4m}a+\frac{m-1}{4}b-\xi(a,b,m,n)\\
\ge &\frac{(m-1)^2(n'-1)}{8m}+\frac{(m-1)n'}{4m}a+\frac{m-1}{4}-\xi(a,1,m,n').
\end{align*} So it is sufficient to prove the inequality  for pair $(m,n')$.
\end{proof}

\begin{proof}[Proof of Proposition~\ref{App 1}]
We shall proceed by induction on $m$. When $m=1$, the statement holds trivially. Assume our proposition holds for odd numbers smaller than $m$, we need to show it also holds for $m$. By Lemma~\ref{App 3}, we can assume $n<m$. 

If $2\nmid n$, then \begin{align*}
\xi(a,b,m,n)=\xi(b,a,n,m)&\le \frac{(n-1)^2(m-1)}{8n}+\frac{(n-1)m}{4n}b+\frac{n-1}{4}a\\&\le \frac{(m-1)^2(n-1)}{8m}+\frac{m-1}{4}b+\frac{(m-1)n}{4m}a.
\end{align*}

If $2\mid n$, let $m=n+m'$, then by Lemma~\ref{App 2}, we have 
\begin{align*}
\xi(a,0,m,n)&=\xi(a,0,m',n)+\frac{n(n-2)}{8}\\&\le\frac{(m'-1)^2(n-1)}{8m'}+\frac{(m'-1)n}{m'}a+\frac{n(n-2)}{8}\\
&<\frac{(m-1)^2(n-1)}{8m}+\frac{(m-1)n}{m}a
\end{align*}\begin{align*}
\xi(0,1,m,n)&=\xi(1,1,m',n)+\frac{n(n-2)}{8}\\&\le \frac{(m'-1)^2(n-1)}{8m'}+\frac{(m'-1)n}{4m'}+\frac{m'-1}{4}+\frac{n(n-2)}{8}\\
&<\frac{(m-1)^2(n-1)}{8m}+\frac{m-1}{4}
\end{align*}\begin{align*}
\xi(1,1,m,n)&=\xi(0,1,m',n)+\frac{n(n+2)}{8}\\&\le\frac{(m'-1)^2(n-1)}{8m'}+\frac{m'-1}{4}+\frac{n(n+2)}{8}\\
&\le\frac{(m-1)^2(n-1)}{8m}+\frac{(m-1)n}{m}+\frac{m-1}{4}
\end{align*}
Here we note that $$\frac{(m-1)^2(n-1)}{8m}-\frac{(m'-1)^2(n-1)}{8m'}=\frac{n(n-1)}{8}-\frac{n(n-1)}{8mm'},$$ and the last equality holds only if $n=m-1$.
\end{proof}

\section*{\bf Acknowledgements} I would like to thank  Prof. Jinxing Cai  for suggesting this problem. I would also like to thank  Prof. Qing Liu  for a lot of discussions and useful suggestions to  improve this paper. Finally I would like to thank 
Universit\'e  de Bordeaux for  hospitality and China Scholarship Council for
financial support.

\end{document}